\def\N{{\mathbb N}}
\def\R{\mathbb{R}}
\def\Z{{\mathbb Z}}
\def\C{{\mathbb C}}
\def\B{{\mathbb B}}
\def\E{{\mathbb E}}
\def\F{{\mathbb F}}
\def\W{{\mathbb W}}
\newcommand{\CP}{\mathbb{C}\mathbb{P}}
\def\b{\beta}
\def\d{\delta}
\newcommand{\eps}{\epsilon}
\def\s{\sigma}
\def\g{\gamma}
\def\s{\sigma}
\def\l{\lambda}
\def\Si{\Sigma}
\def\cB{{\mathcal B}}
\def\cC{{\mathcal C}}
\def\cE{{\mathcal E}}
\def\cL{{\mathcal L}}
\def\cU{{\mathcal U}}
\def\ud{{\underline{\delta}}}
\def\uxi{{\underline{\xi}}}
\def\rD{{\rm D}}
\def\rT{{\rm T}}
\def\rd{{\rm d}}
\def\dt{{\rm d}t}
\newcommand{\CR}{\bar{\partial}}
\newcommand{\pbar}{\bar{\partial}_{J,X}}
\def\bM{{\overline{\mathcal M}}}
\newcommand{\CM}{\overline{\mathcal{M}}}
\def\tsum{\textstyle\sum}
\def\less{{\smallsetminus}}        
\newcommand{\ti}{\tilde}
\newcommand{\Ti}{\widetilde}
\newcommand\ul{\underline}     
\def\st{\: \big| \:}                  
\newcommand\quotient[2]{
        \mathchoice
            {
                \text{\raise1ex\hbox{$#1$}\Big/\lower1ex\hbox{$#2$}}
            }
            {
                #1\,/\,#2
            }
            {
                #1\,/\,#2
            }
            {
                #1\,/\,#2
            }
    }
\newcommand\quo[2]{
                \text{\raise1ex\hbox{$#1\!$}\big/\lower1ex\hbox{$\!#2$}}
  }
\DeclareMathOperator{\Hom}{Hom}
\DeclareMathOperator{\pr}{pr}
\DeclareMathOperator{\supp}{supp}
\newcommand{\id}{\operatorname{id}}
\newcommand{\ind}{\operatorname{ind}}
\newcommand{\im}{\operatorname{im}}
\def\mi{{\raisebox{1pt}{$\scriptscriptstyle{-}$}}}
\def\pl{{\raisebox{1pt}{$\scriptscriptstyle{+}$}}}
\def\pmi{{\raisebox{1pt}{$\scriptscriptstyle\pm$}}}
\def\mip{{\raisebox{1pt}{$\scriptscriptstyle\mp$}}}
\newcommand\qu[2]{
                \text{\raise.8ex\hbox{$\scriptstyle#1$}/\lower.8ex\hbox{$\scriptstyle#2$}}
  }
\newcounter{qcounter}
\newenvironment{enumlist}
   {
      \begin{list}
         {(\roman{qcounter})}
         {
         \usecounter{qcounter}
                     \setlength{\itemsep}{.4ex}
            \setlength{\leftmargin}{1em}
         }
   }
   {
      \end{list}
   }
\newenvironment{itemlist}
   { \begin{list} {$\bullet$}
         {  \setlength{\itemsep}{.4ex}
            \setlength{\leftmargin}{1em} } }
   { \end{list} }
\newtheorem{theorem}{Theorem}[section]
\newtheorem{lemma}[theorem]{Lemma}
\newtheorem{definition}[theorem]{Definition}
\newtheorem{remark}[theorem]{Remark}
\begin{document}


\title{Fredholm notions in scale calculus and Hamiltonian Floer theory}
\author{Katrin Wehrheim}

\begin{abstract}
We give an equivalent definition of the Fredholm property for linear operators on scale Banach spaces and
introduce a nonlinear scale Fredholm property with respect to a splitting of the domain. The latter implies the Fredholm property introduced by Hofer-Wysocki-Zehnder in terms of contraction germs, but is easier to check in practice and holds in applications to holomorphic curve moduli spaces.
We demonstrate this at the example of trajectory breaking in Hamiltonian Floer theory, using a setup that can also be specified to Morse theory.
\end{abstract}

\maketitle

\tableofcontents

\section{Introduction}
Scale calculus was developed by H.~Hofer, K.~Wysocki and E.~Zehnder as part of polyfold theory, which provides an analytic framework for the study of moduli spaces of pseudo-holomorphic curves. 
Roughly speaking, such moduli spaces are (compactifications of) sets of equivalence classes of smooth maps which satisfy the Cauchy-Riemann equation, where two maps \(u\) and \(v\) are equivalent provided there exists a holomorphic automorphism \(\phi\) of the domain such that \(u=v\circ \phi\).  
Since these spaces are studied for almost complex structures, there is no readily available algebraic framework, so that they instead are viewed as solution spaces to a nonlinear PDE, modulo a reparametrization action by a usually finite dimensional Lie Group. The fundamental analytic difficulty in this setup is that Fredholm theory for the Cauchy-Riemann operator requires a Banach space completion of the space of smooth maps, while reparametrizations do not act differentiably in any known Banach completion (see \S\ref{s:calc} and \cite{theguide,McDuffWehrheim}).

The novel approach of polyfold theory to this issue is to replace the classical notion of differentiability in Banach spaces by a new notion of scale differentiability on scale Banach spaces, which allows for a natural framework of a scale of Sobolev spaces (a sequence indexed by the differentiability of the maps), in which reparametrizations act scale smoothly.
The resulting scale calculus for scale manifolds is rich enough to establish a regularization theorem for suitably defined scale Fredholm sections with compact zero set, which allows to associate cobordism classes of smooth manifolds to e.g.\ compact moduli spaces of pseudo-holomorphic curves which contain no nodal curves.
To deal with the latter, polyfold theory introduces a second fundamentally new concept -- generalizing the local models for Banach manifolds to images of scale smooth retractions. 


A brief guide to the notions of polyfold theory used in this paper can be found Appendix \ref{sec:polyfold}. More in-depth introductions to polyfold theory can be found in \cite{hwzbook}, the surveys \cite{hoferCD,theguide}, and its applications in \cite{hwz:gw,fh-sft,BDSVW}.

This note aims to shed some light on the abstract linear and nonlinear Fredholm theory on scale Banach spaces, which obviously are crucial ingredients of the abstract regularization approach provided by polyfold theory.
In particular, our goal is to explain at examples how the classical Fredholm property of elliptic PDE's in many cases directly implies a scale Fredholm property -- at least in the absence of singularities such as nodes.
We give a quick overview of the basic definitions of scale calculus in \S\ref{s:calc} in order to fix notation and make this note (with the exception of the proofs and \S\ref{s:Ham}) self-contained. In particular, we introduce the notion of a {\it norm scale} on an infinite dimensional vector space, which -- by completion -- is the source of all scale Banach spaces.
\S\ref{s:lin} discusses the notion of a linear Fredholm operator on scale Banach spaces, as warmup for the nonlinear theory. We introduce a simpler definition, demonstrate why it is naturally satisfied by elliptic operators, and prove that it is equivalent to the definition in \cite{hwzbook}.
As core of the paper, \S\ref{s:nonlin} then introduces a simpler nonlinear Fredholm property with respect to a splitting in Definition~\ref{def:scfred}, which in practice will be given by splitting off a finite dimensional space of gluing parameters from an infinite dimensional function space. We show in Theorem~\ref{thm} that this Fredholm property implies the Fredholm property based on a contraction germ normal form that is introduced in \cite{hwzbook}.
We moreover explain in Remark~\ref{rmk:contract} the need for a separate nonlinear Fredholm notion in scale calculus, whereas the classical notion of a nonlinear Fredholm map is given simply by requiring continuous differentiability and the linear Fredholm property for the linearized maps (at zeros).

Finally, \S\ref{s:Ham} uses the example of Hamiltonian Floer theory to demonstrate the strength of our simplified nonlinear Fredholm notion and polyfold theory in general.
While we defer the global setup in terms of retractions and fillings to \cite{fh-sft-full}, we give a complete proof of the Fredholm property for Floer's perturbed Cauchy-Riemann operator near a broken Floer trajectory in Theorem~\ref{mainthm}.
Combined with the abstract transversality and implicit function theorem for polyfolds in \cite{hwzbook}, this Fredholm property replaces the entire transversality and gluing techniques in the classical construction of Hamiltonian Floer theory in the absence of bubbling, as developed by Floer \cite{floer}. It even provides a smooth structure with boundary and corners on perturbations of the compactified moduli spaces of Floer trajectories.\footnote{
The boundary stratification of this regularization arises from trajectory breaking exclusively; sphere bubbling yields interior codimension 2 strata. This reduces the challenge of constructing Floer homology to the combinatorial issue of ordering the choice of perturbations on different moduli spaces so that they can be made coherent with ``gluing identifications'' of the boundary strata with fiber products of previous moduli spaces.
}

For readers who are not familiar or not interested in Floer theory, this last section can also be read as a polyfold setup for Morse theory. However, the Morse-Witten moduli spaces do not require polyfold technology for regularization; see e.g.\ \cite{schwarz:Morse,W:Morse}.

\smallskip
\noindent
{\bf Acknowledgements:}
This work was inspired by questions arising in \cite{theguide} with with Joel Fish, Roman Golovko, and Oliver Fabert, and was supported by NSF grant DMS 0844188 and the Institute for Advanced Studies.
Peter Albers, Nate Bottman, Helmut Hofer, Jiayong Li, Dusa McDuff, and Chris Policastro provided helpful discussions and detection of a number of gaps in earlier versions.
The presentation and precision was also improved by two very helpful referee reports.

\section{Some basic scale calculus} \label{s:calc}

We begin with a new definition, which in practice is applied to spaces of smooth maps to give rise to scale Banach spaces.

\begin{definition}\label{def:scCompletion}
A {\bf norm scale} on a vector space $F$ is a sequence ${(\|\cdot\|_k)_{k\in \N_0}}$ of norms on $F$ such that for each $k>j$ the identity map 
$$
\id_{F} \,:\; (F,\|\cdot\|_k) \;\longrightarrow\; (F,\|\cdot\|_j)
$$ 
is continuous and compact. That is,
\begin{itemize}
\item 
there is a constant $C_{k,j}$ such that $\|f\|_j \leq C_{k,j} \|f\|_k$ for all $f\in F$,
\item
any $\|\cdot\|_k$-bounded sequence $(f_n)_{n\in\N}$  (that is with $\sup_{n\in\N} \| f_n\|_k <\infty$) has a subsequence $(f_{n_i})_{i\in\N}$ that is a Cauchy sequence in $(F,\|\cdot\|_j)$ (that is with $\sup_{\ell>i} \| f_{n_\ell} - f_{n_i} \|_j \to 0$ for $i\to\infty$).
\end{itemize}
\end{definition}

The compactness property of $\id_{F} : (F,\|\cdot\|_k) \to (F,\|\cdot\|_j)$ as stated above is equivalent to compactness of the map $\id_{F} : \overline{F}^{\|\cdot\|_k}  \to \overline{F}^{\|\cdot\|_j} $ between the completions: The image of any bounded set in $\overline{F}^{\|\cdot\|_k}$ is mapped to a precompact  set (i.e.\ its closure is compact) in $\overline{F}^{\|\cdot\|_j}$. 
On a finite dimensional vector space $F$, this compactness condition is automatic since the bounded sets have compact closure. 
Since moreover all norms on a finite dimensional vector space are complete and equivalent, there is no nontrivial example of a norm scale on a finite dimensional vector space. 
The following list provides the first two nontrivial examples together with the scale Banach spaces that result from completion in each scale, as formally introduced in Definition~\ref{def:scBanachSpace}.

\begin{itemlist}
\item
The $\cC^k$-norms $(\|\cdot\|_{\cC^k})_{k\in\N_0}$ are a norm scale on $F=\cC^\infty(S^1)$.
The $\|\cdot\|_{\cC^k}$-completions of $\cC^\infty(S^1)$ then form the scale Banach space 
$$
\F \;=\;\bigl( \, \overline{F}^{\|\cdot\|_k} \bigr)_{k\in\N_0}\;=\;\bigl( \cC^k(S^1) \bigr)_{k\in\N_0}.
$$
\item
The $W^{k,p}$ Sobolev-norms $(\|\cdot\|_{W^{k,p}})_{k\in\N_0}$  are a norm scale on $F=\cC^\infty(S^2,\C^n)$ for any $1\le p\le \infty$, $n\ge 1$.
The $\|\cdot\|_{W^{k,p}}$-completions of $\cC^\infty(S^2,\C^n)$ then form the scale Banach space 
$$
\F \;=\;\bigl( \, \overline{F}^{\|\cdot\|_k} \bigr)_{k\in\N_0}\;=\;\bigl( W^{k,p}(S^2,\C^n) \bigr)_{k\in\N_0}.
$$
\end{itemlist}

For the rest of this section we follow \cite{hwzbook} -- with some convenient tweaks of notation -- in developing the basic language of scale calculus.

\begin{definition}[\cite{hwzbook} 1.1] \label{def:scBanachSpace}
A {\bf sc-Banach space} $\E=(E_k)_{k\in \N_0}$ is a Banach space ${(E,\|\cdot\|)}$ together with an sc-structure $(E_k,\|\cdot\|_k)_{k\in \N_0}$, which consists of a sequence of linear subspaces $E_k\subset E$, each equipped with a Banach norm $\|\cdot\|_k$, such that the following holds.
\begin{enumerate}
\item We have $(E,\|\cdot\|)=(E_{0},\|\cdot\|_0)$ as Banach space.
\item 
For each $k>j$ there is an inclusion of subspaces $E_k\subset E_j$, and the inclusion map $(E_k,\|\cdot\|_k)\to (E_{j},\|\cdot\|_j)$ is continuous and compact.
\item 
The subspace $E_\infty:=\bigcap_{k\in\N_0} E_k\subset E$ is dense in each $(E_k,\|\cdot\|_k)$.
\end{enumerate}
\end{definition}

\begin{remark} \label{rmk:trivial}
The natural and in fact unique sc-structure on a finite dimensional vector space $E$ is the {\bf trivial sc-structure} $(E_k=E)_{k\in\N_0}$. 
In particular, for $n\in\N$ we denote by $\R^n$ and $\C^n$ the real and complex Euclidean spaces with standard norm and trivial sc-structure.
\end{remark}

At this point, the reader unfamiliar with scale calculus can get familiarized with the concept by checking that the completions with respect to a norm scale always form an sc-Banach space; in particular so do the examples above.
Next, the prototypical examples of scale smooth but not classically differentiable maps are the reparametrization actions in the above examples.

\begin{itemlist}
\item
The translation action on functions with domain $S^1:=\R/\Z$,
$$
\tau: \R \times \cC^0(S^1) \to \cC^0(S^1), \quad (s, f) \mapsto f(s + \cdot)
$$
has directional derivatives at points $(s_0,f_0)\in \R\times \cC^1(S^1)$, but the derivative $\frac\rd{\rd h}\bigr|_{h=0} (f_0+hF)(s_0+hS) = S \, \dot f_0(s_0 + \cdot) \;+\; F(s_0 + \cdot)$ in direction $(S,F)$ is not well defined for $f_0\in\cC^0(S^1)\less \cC^1(S^1)$.

Moreover, $\tau$ is in fact nowhere classically differentiable.\footnote{
The directional derivative in any fixed direction $(S,F)\in\R\times \cC^0(S^1)$ exists since uniform continuity of $F$ guarantees $\max_{s\in S^1} \bigl| F(s+h) - F(s) \bigr| \to 0$ as $h\to 0$. However, the unit ball in $\cC^0(S^1)$ is not equicontinuous, so that differentiability on the normed space, $\sup_{\|F\|_{\cC^0}=1} \max_{s\in S^1} \bigl| F(s+h) - F(s) \bigr| \to 0$ as ${h\to 0}$, fails.}
However, the restriction of $\tau$ to $\R \times \cC^{k+1}(S^1, \R) \to \cC^k(S^1)$ 
is continuously differentiable for any $k\in\N_0$. In fact, $\tau$ is sc$^\infty$ if we equip $\cC^0(S^1)$ with the sc-structure $(\cC^k(S^1))_{k\in\N_0}$. Its differential is
$$
\rD_{(s_0,f_0)}\tau (S, F) \;= \; S \, \dot f_0(s_0 + \cdot) \;+\; F(s_0 + \cdot) .
$$
(This example uses the product $\R\times\E$ of an sc-Banach space with the trivial sc-structure on $\R$, given by the scales $(\R\times E_k)_{k\in\N_0}$; c.f.\ Remark~\ref{rmk:prod} below.) 
\item
The reparametrization action by the group of M\"obius transformations $PSL(2,\C)$ on $S^2=\CP^1$,
$$
\theta: PSL(2,\C) \times W^{k,p}(S^2,\C^n) \to W^{k,p}(S^2,\C^n), \quad (\phi, u) \mapsto u\circ\phi
$$
has directional derivatives at $(\phi_0,u_0)\in PSL(2,\C)\times W^{k+1,p}(S^2,\C^n)$ but is differentiable only as map $PSL(2,\C) \times W^{k+1,p}(S^2,\C^n) \to W^{k,p}(S^2,\C^n)$.

However, $\theta$ is sc$^\infty$ on the sc-Banach space $\bigl( W^{k,p}(S^2,\C^n) \bigr)_{k\in\N_0}$.
(This example uses a scale structure on the nonlinear space $PSL(2,\C)$, which is defined via local charts of this Lie group in $\C^3$.)
\end{itemlist}

\begin{definition}[\cite{hwzbook} 1.1] \hspace{-2mm}\footnote{
Note that \cite{hwzbook} does not explicitly define a notion of scale differentiability as in (ii), but rather groups (ii)-(iv) for $k=1$ into the definition of continuous scale differentiability sc$^1$, which is the relevant notion for most purposes. The purpose of our definition (ii) is to define the tangent map (iii) in maximal generality.
}
\label{def:sc}
Let $\E,\F$ be sc-Banach spaces and let $\Phi:U\to F_0$ be a map defined on an open subset $U\subset E_0$.
\begin{enumlist}
\item
$\Phi$ is {\bf scale continuous} ($\mathbf{sc^0}$) if the restrictions $\Phi|_{U\cap E_m}: E_m\to F_m$ 
are continuous for all $m\in\N_0$;
\item
$\Phi$ is {\bf scale differentiable} if it is $sc^0$ and for every $x\in U\cap E_1$
there exists a bounded linear operator $\rD\Phi(x):E_0\to F_0$ such that
$$
\sup_{\|h\|_{E_1}=\hbar} \hbar^{-1}  \bigl\| \Phi( x + h ) - \Phi ( x ) - \rD\Phi(x) h \bigr\|_{F_0}
\underset{\hbar\to 0}\longrightarrow 0
$$
and $\rD\Phi(x) E_m \subset F_m$ whenever $x\in U\cap E_{m+1}$.
\item
If $\Phi$ is scale differentiable then its {\bf tangent map} is the map
$$
\rT\Phi: \rT\E|_U \to \rT\F , \qquad  (x,h) \mapsto (\Phi(x),\rD\Phi(x)h),
$$
defined on the open subset $\rT\E|_U:=(U\cap E_1)\times E_0$ of the sc-Banach space
$\rT\E:=( E_{m+1}\times E_m )_{m\in\N_0}$, mapping to
$\rT\F:=( F_{m+1}\times F_m )_{m\in\N_0}$.
\item
$\Phi$ is {\bf $\mathbf k$-fold continuously scale differentiable} ($\mathbf{sc^k}$) for $k\geq 1$
if it is scale differentiable and its tangent map $\rT\Phi:\rT\E|_U \to \rT \F$ is $sc^{k-1}$;
\item
$\Phi$ is {\bf scale smooth} ($\mathbf{sc^\infty}$) if it is $sc^k$ for all $k\in\N_0$.
\end{enumlist}
\end{definition}

Finally, we introduce a germ-like notion of scale smoothness at a point.

\begin{definition} \label{def:germsmooth}
Let $\E,\F$ be sc-Banach spaces and let $\Phi:U\to F_0$ be a map on an open set $U\subset E_0$. Then $\Phi$ is {\bf scale smooth at $\mathbf{e_0}\in U\cap E_\infty$}  (or $\mathbf{sc^\infty}$ {\bf at} $\mathbf{e_0}$) if for every $k\in\N_0$ there exists a neighbourhood $U_k\subset U$ of $e_0$ such that $\Phi|_{U_k}$ is sc$^k$. 
\end{definition}

\section{Fredholm property for linear operators} \label{s:lin}

We give a new definition of the Fredholm property for linear maps on scale Banach spaces, which in Lemma~\ref{sc is hwz} we show to be equivalent to the definition of \cite{hwzbook}.

\begin{definition} \label{fred op}
Let $\E, \F$ be sc-Banach spaces.
An {\bf sc-Fredholm operator} $T:\E\to\F$ is a linear map $T: E_0\to F_0$ that satisfies the following.
\begin{enumlist}
\item
$T$ is sc$^0$, that is all restrictions $T|_{E_m} : E_m \to F_m$ for $m\in\N_0$ are bounded.
\item
$T$ is {\bf regularizing}, that is $e\in E_0$ and $T e\in F_m$ for $m\in\N_0$ implies $e\in E_m$.
\item
$T: E_0 \to F_0$ is a Fredholm operator, that is it has finite dimensional kernel $\ker T$, closed range $T(E_0)$, and finite dimensional cokernel $F_0/T(E_0)$.
\end{enumlist}
\end{definition}

\noindent
The prototypical examples of sc-Fredholm operators are elliptic operators:

\begin{itemlist}
\item
Let $\E:=\bigl( \cC^{1+k}(S^1) \bigr)_{k\in\N_0}$ and $\F:=\bigl( \cC^{k}(S^1) \bigr)_{k\in\N_0}$, then 
$\frac\rd\dt : \cC^1(S^1) \to \cC^0(S^1)$ is an sc-Fredholm operator $\frac\rd\dt:\E\to\F$.
\item
The Cauchy--Riemann operator with respect to almost complex structures $j$ on $S^2$ and $J$ on $\C^n$
is an sc-Fredholm operator $\CR_J:\E\to\F$ for any $1<p<\infty$,
$$
\CR_J : W^{1,p}(S^2,\C^n) \to L^p(S^2,\Lambda^{0,1}\otimes_J\C^n)
, \quad
u\mapsto \tfrac 12( J \circ \rd u \circ j + \rd u ) , 
$$
where $\E:=\bigl( W^{1+k,p}(S^2,\C^n)  \bigr)_{k\in\N_0}$ 
and $\F:=\bigl( W^{k,p}(S^2,\Lambda^{0,1}\otimes_J\C^n) \bigr)_{k\in\N_0}$ are the $W^{k,p}$-closures of smooth $(J,j)$-antilinear $\C^n$-valued $1$-forms on $S^2$.
\end{itemlist}

The sc$^0$-property of these operators is a formalization of the fact that linear differential operators of degree $d$ are bounded as operators between appropriate function spaces with a difference of $d$ in the differentiability index.
The regularizing property is an abstract statement of elliptic regularity.
Finally, the elliptic estimates for an operator and its dual generally hold on all scales similar to the boundedness, which implies the Fredholm property on all scales.
Our sc-Fredholm notion formalizes the fact that one can obtain the Fredholm property of elliptic operators on all scales from their elliptic regularity together with the Fredholm property on a fixed scale, see Lemma~\ref{sc fred}.
The sc-Fredholm notion in \cite{hwzbook} is based on the following notion of direct sums in scale Banach spaces. 

\begin{definition} [\cite{hwzbook} 1.1]
Let $\E$ be an sc-Banach space.
Two linear subspaces $X,Y \subset E_0$ split $\E$ as an {\bf sc-direct sum} $\E = X \oplus_{sc} Y$ if
\begin{enumerate}
\item both $X$ and $Y$ are closed and carry scale structures $X\cap E_m$ resp.\ $Y\cap E_m$;
\item
on every level $m\in\N_0$ we have a direct sum $E_m = (X\cap E_m) \oplus (Y\cap E_m)$.
\end{enumerate}
If  $\, \E = X \oplus_{sc} Y$ then we call $Y$ the {\bf sc-complement} of $X$.
\end{definition}

\begin{remark} [\cite{hwz2} Glossary] \label{rmk:prod} 
There is a natural product notion $\E\times\F$ for sc-Banach spaces, such that $\E\times\F = (\E\times\{0\}) \oplus_{sc} (\{0\} \times \F)$. 
The {\bf sc-product} $\E\times\F$ of two sc-Banach spaces $\E,\F$ is the Cartesian product $E\times F$ with the scale structure $(E\times F)_k:=\bigl(E_k\times F_k , \|\cdot\|_{E_k} + \|\cdot\|_{F_k} \bigr)$.
\end{remark}

\begin{definition} [\cite{hwzbook} 1.1] \label{hwzfred}
Let $\E, \F$ be sc-Banach spaces.
A {\bf sc-Fredholm operator} $T:\E\to\F$ is a linear map $T: E_0\to F_0$ that satisfies the following.

\medskip
\noindent
(i)
The kernel $\ker T$ is finite dimensional with sc-complement
${\E = \ker T \oplus_{sc} X}$.

\medskip
\noindent
(ii)
The image $T(E_0)$ has a finite dimensional sc-complement
$\F = T(E_0) \oplus_{sc} C$.

\medskip
\noindent
(iii)
The operator restricts to an sc-isomorphism $T|_X : X \to T(E_0)$.
\end{definition}

Before proving equivalence of the two sc-Fredholm notions, we show that sc-Fredholm operators in the first sense are in fact Fredholm on each scale (as are the latter -- a simple consequence of the sc-direct sums).

\begin{lemma} \label{sc fred}
If $T: \E\to \F$ is sc-Fredholm as in Definition~\ref{fred op}, then the restrictions $T|_{E_m}: E_m \to F_m$ are Fredholm for all $m\in\N$ with kernel and cokernel
$$
\ker T|_{E_m} = \ker T  \subset E_\infty, \qquad  \quo{F_m}{T(E_m)} \cong \quo{F_0}{\im T} ,
$$
where the latter isomorphism is induced by the inclusion $F_m\subset F_0$.
In particular, the Fredholm index of $T|_{E_m}$ is the same on any scale $m\in\N_0$, 
$$
\ind(T) = \ind(T|_{E_m}) = \dim\ker T - \dim\bigl(\qu{F_m}{T(E_m)}\bigr) .
$$ 
\end{lemma}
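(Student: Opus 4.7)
The plan is to establish the three claims (kernel, cokernel, index) in turn, mostly by leveraging the regularizing property to move between scales and the density axiom to produce good complements.

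First I would dispatch the kernel statement. Any $e\in\ker T$ satisfies $Te=0\in F_m$ for every $m$, so the regularizing property (ii) forces $e\in E_m$ for all $m$, hence $e\in E_\infty$. Therefore $\ker T|_{E_m}=\ker T\cap E_m=\ker T$, which is finite dimensional by (iii). This also shows that $\ker T$ automatically admits closed complements on every scale, a fact I will use below.

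Next I would analyze the natural map $\Psi_m: F_m/T(E_m)\to F_0/T(E_0)$ induced by the inclusion $F_m\hookrightarrow F_0$, and show it is a linear isomorphism. Well-definedness is immediate from $T(E_m)\subset T(E_0)$. For injectivity, if $f\in F_m$ and $[f]_0=0$, then $f=Te$ for some $e\in E_0$; the regularizing property (ii) applied to $Te=f\in F_m$ gives $e\in E_m$, so $[f]_m=0$. For surjectivity, pick a basis $c_1,\dots,c_n$ of an algebraic complement of $T(E_0)$ in $F_0$, which exists and is finite dimensional by (iii). Since $T(E_0)$ is closed in $F_0$, the quotient map $q:F_0\to F_0/T(E_0)$ is continuous. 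Using the density of $F_\infty$ in $F_0$ (axiom (iii) for $\F$), approximate each $c_i$ by $c_i'\in F_\infty\subset F_m$. Then $q(c_i')$ lies arbitrarily close to $q(c_i)$ in the $n$-dimensional quotient space, and since the set of ordered bases is open there, for close enough approximations the vectors $q(c_1'),\dots,q(c_n')$ form a basis of $F_0/T(E_0)$. This shows $\Psi_m$ is surjective, and in particular $\dim F_m/T(E_m)=\dim F_0/T(E_0)<\infty$.

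Finally I would conclude Fredholmness on scale $m$ and read off the index. The map $T|_{E_m}$ is bounded by (i), has finite-dimensional kernel $\ker T$, and we have just shown its cokernel is finite dimensional. To verify that its range is closed, I would invoke the standard open-mapping argument: choose a finite-dimensional $C_m\subset F_m$ mapping isomorphically to $F_m/T(E_m)$ under the quotient, pick a closed complement $X_m\subset E_m$ of $\ker T$, and consider
\[
S_m: X_m\oplus C_m \;\longrightarrow\; F_m, \qquad (x,c)\longmapsto Tx+c.
\]
This is a bounded bijection between Banach spaces, hence a topological isomorphism by the open mapping theorem, and $T(E_m)=T(X_m)$ equals the kernel of the continuous map $\pi_{C_m}\circ S_m^{-1}:F_m\to C_m$, so it is closed. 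Therefore $T|_{E_m}$ is Fredholm with $\ind T|_{E_m}=\dim\ker T-\dim F_0/T(E_0)=\ind T$.

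I expect the surjectivity of $\Psi_m$ to be the only step requiring care: everything else is a direct unpacking of axioms (i)-(iii), whereas surjectivity needs the density hypothesis combined with the openness of the basis condition in a finite-dimensional quotient, and it is the place where one must not confuse density in $F_m$ with density in $F_0$.
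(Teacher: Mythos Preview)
Your proof is correct and shares the same backbone as the paper's: the kernel argument is identical, and your injectivity of $\Psi_m$ is exactly the paper's observation that regularization forces $T(E_m)=T(E_0)\cap F_m$. There are two tactical differences worth noting. First, for surjectivity of $\Psi_m$ the paper argues more briefly: since $F_m$ is dense in $F_0$, the image of the already-established injection $\Psi_m$ is dense in the finite-dimensional space $F_0/T(E_0)$, hence equals it; your explicit basis-approximation does the same thing constructively. Second, the paper establishes closedness of $T(E_m)$ directly and \emph{first}, before touching cokernels: if $T(e_i)\to f_\infty$ in $F_m$, then $f_\infty\in T(E_0)$ by closedness on level $0$, and regularization pushes the preimage into $E_m$. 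You instead deduce closedness at the end from finite codimension via the open mapping theorem. The paper's route is shorter and avoids invoking open mapping; yours is self-contained in that it re-proves the standard ``finite-codimensional range is closed'' fact rather than leaning on it.
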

\begin{proof}
Due to the embedding $E_m\subset E_0$, the kernel of $T|_{E_m}$ is $\ker T \cap E_m$, i.e.\ finite dimensional since it is a subspace of the kernel $\ker T\subset E_0$ that is finite dimensional by the Fredholm property of $T:E_0\to F_0$ given by (iii).
In fact, the regularization property (ii) for $0\in F_\infty$ implies $\ker T\subset E_\infty$, so that $\ker T \cap E_m=\ker T$ for all $m\in\N_0$.

Next, we will show that $T(E_m)\subset F_m$ is closed, although this will also follow from finite dimensionality of the cokernel.
For that purpose we need to consider any sequence $e_i\in E_m$ which has converging images $T(e_i)\to f_\infty$ in the $F_m$-topology, and show that $f_\infty\in T(E_m)$. Indeed, then closedness of $T(E_0)\subset F_0$ from the Fredholm property (iii) implies $f_\infty=T(e')$ for some $e'\in E_0$, and we have $T(e')=f_\infty\in F_m$ since it is the limit of a sequence in $F_m$, hence the regularization property (ii) implies that $e'\in E_m$ and hence $f_\infty=T(e')\in T(E_m)$ by the boundedness of $T|_{E_m}$ given by the sc$^0$ property (i).

The last part of this argument also says that the regularizing property (ii) together with the boundedness (i) imply $T(E_m) = T(E_0)\cap F_m$. Hence the inclusion $F_m\subset F_0$ induces an injection of cokernels $\qu{F_m}{T(E_m)} \hookrightarrow \qu{F_0}{T(E_0)}$, of which the latter is finite dimensional by (iii).
This proves that $T|_{E_m}$ also has finite dimensional cokernel, and hence is Fredholm as claimed.
In fact, since $F_m\subset F_0$ is dense, the image of the injection of cokernels must also be dense. But in finite dimensions that means equality, as claimed.
\end{proof}

\begin{lemma} \label{sc is hwz}
Let $\E, \F$ be sc-Banach spaces, then a linear map $T: E_0\to F_0$ is sc-Fredholm by Definition~\ref{fred op} iff it is sc-Fredholm by Definition~\ref{hwzfred}.
\end{lemma}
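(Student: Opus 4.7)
The plan is to prove both directions, treating the forward direction (sc-Fredholm $\Rightarrow$ HWZ-Fredholm) as the substantial one, since it requires constructing sc-complements that split every scale level.

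For the forward direction, I will first invoke Lemma~\ref{sc fred}: it supplies $\ker T \subset E_\infty$ and the identity $T(E_m) = T(E_0) \cap F_m$ with finite-dimensional cokernel on every scale. To produce the sc-complement of $\ker T$, I will pick a basis $e_1, \ldots, e_k$ of $\ker T$ (automatically in $E_\infty$), choose Hahn--Banach dual functionals $\ell_1, \ldots, \ell_k \in (E_0)^*$ with $\ell_i(e_j) = \delta_{ij}$, and form the projection $P := \sum_i \ell_i(\cdot)\, e_i$. Since the $\ell_i$ restrict to bounded functionals on each $E_m$ via the continuous inclusions $E_m \hookrightarrow E_0$ and the $e_i$ lie in $E_\infty$, the operator $P$ is bounded $E_m \to \ker T$ on every scale, so $X := \ker P$ gives the sc-complement $\E = \ker T \oplus_{sc} X$. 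For the cokernel, density of $F_\infty$ in $F_0$ lets me pick $c_1, \ldots, c_r \in F_\infty$ whose classes span $F_0/T(E_0)$; setting $C := \Span(c_1, \ldots, c_r)$, the identity $T(E_m) = T(E_0) \cap F_m$ upgrades $F_0 = T(E_0) \oplus C$ to a scale-wise splitting. Finally, $T|_X : X \to T(E_0)$ is a continuous bijection that on every level restricts to a continuous bijection between the Banach spaces $X \cap E_m$ and $T(E_m)$, so the open mapping theorem supplies a bounded inverse on each scale, giving the sc-isomorphism.

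For the backward direction, I will first extract $\ker T \subset E_\infty$ and $C \subset F_\infty$ from Remark~\ref{rmk:trivial}, since the sc-structures that $\ker T$ and $C$ inherit from the sc-direct sums must be trivial. The bounded level-$m$ projections $\pi_m^X : E_m \to X \cap E_m$ coming from the sc-direct sum then give the sc$^0$ property via $T(e) = T(\pi_m^X e)$ and the sc-iso bound on $T|_X$. Regularization follows from the same decomposition: if $T(e) \in F_m$ then $T(\pi_0^X e) \in T(E_0) \cap F_m$ forces $\pi_0^X e \in X \cap E_m$ through the sc-iso inverse, while the other component lies in $\ker T \subset E_\infty \subset E_m$. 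The classical Fredholm property on $E_0$ is then read off directly: $\ker T$ is finite-dimensional by hypothesis, $T(E_0)$ is closed as a Banach-space direct summand of $F_0$, and the cokernel is isomorphic to the finite-dimensional space $C$.

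The hard step is the sc-complement construction in the forward direction, because a generic closed complement of $\ker T$ in $E_0$ need not intersect $E_m$ in a complement of $\ker T$ at level $m$. The leverage comes entirely from $\ker T \subset E_\infty$ (supplied by Lemma~\ref{sc fred}), which lets a single Hahn--Banach projection on $E_0$ preserve every scale simultaneously; an analogous density argument handles the cokernel via $C \subset F_\infty$, and both steps hinge on the regularizing property through the identity $T(E_m) = T(E_0) \cap F_m$.
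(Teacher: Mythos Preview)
Your proof is correct and follows the same strategy as the paper's: the only substantive difference is that you supply the Hahn--Banach and density constructions for the sc-complements explicitly, whereas the paper delegates these to \cite[Proposition~2.7 and Lemma~2.12]{hwz1}. You are slightly less explicit than the paper about verifying that $T(E_0)\cap F_m$ forms an sc-structure and that the level-wise projections onto $C$ are continuous, but these are routine and your argument goes through.
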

\begin{proof}
Given the splittings (iii) in Definition~\ref{hwzfred}, the finite dimensional summands are necessarily contained in the ``smooth'' intersection of all scales, $\ker T\subset E_\infty$ and $C\subset F_\infty$, since otherwise e.g.\ $C\cap F_\infty$ would be a proper subspace of $C$, which in finite dimensions contradicts the density axiom for sc-Banach spaces.
Next, any sc-Fredholm operator $T$ in the sense of by Definition~\ref{hwzfred} is regularizing by \cite[Prop.1.1.10]{hwzbook}, that is $T(E_0)\cap F_m = T(E_m)$. Together with $\ker T\subset E_\infty$ this indeed implies $T^{-1}\bigl(T(E_0)\cap F_m\bigr) \subset E_m$.
Moreover, each restriction $T|_{E_m}$ can be viewed as an operator between the direct sums $T|_{E_m} : \ker T \oplus (X\cap E_m) \to T(E_m) \oplus C$, where by (iii) the further restriction $T|_{X\cap E_m} : X\cap E_m \to T(E_m)$ is an isomorphism.
Since $\ker T$ and $C$ are finite dimensional, this implies that $T|_{E_m}$ is classically Fredholm, and hence $T$ is sc-Fredholm as per Definition~\ref{fred op}.

Conversely, given an sc-Fredholm operator $T$ as in Definition~\ref{fred op}, we have $\ker T \subset E_\infty$ by the regularizing property, and this kernel is finite dimensional by the Fredholm property of $T|_{E_0}$.
Then \cite[Prop.1.1.7]{hwzbook} provides an sc-complement $\E=\ker T \oplus_{sc} X$, that is $X\cap E_m$ is a topological complement for $\ker T \subset E_m$ for each $m\in\N_0$.
Next, $T(E_0)\subset F_0$ is closed and of finite codimension by the Fredholm property of $T|_{E_0}$, hence has a finite dimensional topological complement in $F_0$. Then \cite[Lemma 2.12]{hwz1} provides a finite dimensional subspace $C\subset F_\infty$ with $F_0= T(E_0) \oplus C$. We claim that this in fact yields an sc-direct sum
\begin{equation} \label{scale cokernel}
\F= T(E_0) \oplus_{sc} C .
\end{equation}
To check this we first ensure that $T(E_0)\cap F_m$ defines an sc-structure on $T(E_0)$. Indeed, by the regularizing property we have $T(E_0)\cap F_m=T(E_m)$, which is closed by Lemma~\ref{sc fred}, and hence inherits a Banach space structure from $F_m$.
Now the embedding $T(E_m)\hookrightarrow T(E_{m+1})$ is compact
since it is a restriction of the compact embedding $F_m\subset F_{m+1}$.
Moreover, $T(E_0)\cap F_\infty = T(E_\infty)$ is dense in every $T(E_m)$ since $E_\infty\subset E_m$ is dense and $T:E_m\to F_m$ is continuous.
Thus we have a scale structure on $T(E_0)$, along with the trivial scale structure $(C_m=C)_{m\in\N_0}$ on $C$. It remains to check that the direct sum isomorphism $\Pi_{T(E_0)}\times \Pi_C : F_0 \to  T(E_0) \times C$, given by continuous projection maps, is in fact an sc-isomorphism. On the finite dimensional space $C$ all norms are equivalent, so the continuity of $\Pi_C:F_0 \to C\subset F_0$ and $F_m\subset F_0$ implies continuity of
$\Pi_C:F_m \to C\subset F_m$,
$$
\bigl\| \Pi_C f \bigr\|_{F_m} \leq C_m \bigl\| \Pi_C f \bigr\|_{F_0} \leq C_m  C \| f \|_{F_0}
\leq C_m C C'_m \| f \|_{F_m} \qquad \forall f\in F_m .
$$
Now this implies continuity of $\Pi_{T(E_0)}|_{F_m}={\rm Id_{F_m}} - \Pi_C|_{F_m}$.
Hence we have established \eqref{scale cokernel}, that is, $C\subset F_m$ is a topological complement of $T(E_m)=T(E_0)\cap F_m$ for each $m\in\N_0$.
Finally, the restriction $T|_X : X \to T(E_0)$ is an sc-isomorphism since on every level $T : X\cap E_m \to T(E_m)$ is the restriction of a Fredholm operator to a map between the complement of the kernel and the image.
This proves that $T$ is also sc-Fredholm in the sense of Definition~\ref{hwzfred}.
\end{proof}

\section{Fredholm property for nonlinear maps} \label{s:nonlin}

The notion of a nonlinear Fredholm map on scale Banach spaces is not obtained by adding ``sc-'' in appropriate places to the classical definition of Fredholm maps, but requires tweaking to ensure an implicit function theorem for sc-Fredholm maps with surjective linearization. The latter is usually proven by means of a contraction property of the map in a suitable reduction. Since the contraction will be iterated to obtain convergence, it needs to act on a fixed Banach space rather than between different levels of a scale Banach space. 
In classical Fredholm theory, this contraction form follows from the continuity of the differential in the operator norm, whereas the differential of a scale smooth map is generally continuous only as operator between different levels. Hofer-Wysocki-Zehnder solve this issue by making the contraction property a part of the definition of Fredholm maps. However, this raises the question of how this property can be proven for a given map.
It turns out that in practice, this ``contraction germ normal form'' is established by proving classical continuous differentiability of the map in all but finitely many directions and the scale Fredholm property for this partial derivative.
We will formalize this approach in an alternative definition of the nonlinear Fredholm property, which is stronger than the following definition from \cite{hwzbook}, but is easier to check in practice.

Throughout we restrict our discussion to the Fredholm property at the zero vector $0\in\E$ in a scale Banach space. However, by a simple shift this provides the general Fredholm notion in polyfold theory. 

\begin{definition} [\cite{hwzbook} 3.1\footnote{
This definition is not explicitly given in \cite{hwzbook}. It is obtained from the definition of an M-polyfold Fredholm section of a strong bundle as the special case of the section $f(e)=(e,\Phi(e))$ in the trivial bundle $\E\times\F \to \E$ with trivial splicing (hence no filled section is involved). 
The reference sc$^+$ section $s:\cU \to \F$ will be given by $e \mapsto G\bigl( e , \Phi(0) \bigr)$, and is sc$^+$ due to the strong sc-smoothness of $G$ and $\Phi(0)\in F_\infty$. 
}] \label{def:HWZfred}
Let $\Phi:\E\to\F$ be a $sc^\infty$ map between sc-Banach spaces $\E, \F$.
Then $\Phi$ is {\bf scale Fredholm at $\mathbf{0}$} if the following holds:
\begin{enumlist}
\item
$\Phi$ is {\bf regularizing as germ}:
For every $m\in\N_0$ there exists $\epsilon_m>0$ such that
$\Phi(e)\in F_{m+1}$ and $\|e\|_{E_m}\leq \epsilon_m$ implies $e\in E_{m+1}$, 
\item
$\Phi$ has a {\bf contraction germ normal form} in sc-coordinates, that is:
\begin{itemize}
\item
There is an open sc-embedding $h:\cU \to \R^k\times\W$ (i.e.\ an sc$^\infty$ map to an open subset with sc$^\infty$ inverse) for some neighbourhood $\cU\subset E_0$ of $0$, some $k\in\N_0$, and some sc-Banach space $\W$, such that $h(0)=(0,0)$.
\item 
There is a germ of strong bundle isomorphism $G = \bigl(g_e:\F \to \R^\ell\times\W\bigr)_{e\in\cU}$ i.e.\ a family of linear bijections $g_e:F_0\to \R^\ell \times W_0$ for some $\ell\in\N_0$ 
so that for $i=0,1$ the map
$$
\qquad
G \,: \; \bigl( (E_m\cap \cU) \times F_{m+i} \bigr)_{m\in\N_0}  \;\to\;  \bigl( \R^\ell  \times W_{m+i} \bigr)_{m\in\N_0} , \quad
(e,f) \;\mapsto\; g_e(f)
$$ 
restricts to sc$^n$ maps on neighbourhoods $U_n\subset\cU$ of $0$ for every $n\in\N_0$.
\item
The transformed map is of the form
$$
G \circ \bigl( \Phi - \Phi(0) \bigr) \circ h^{-1} \,: \; (v,w) \; \mapsto\;  \bigl( A(v,w) , w - B(v,w) \bigr) ,
$$
where $A:\R^k\times\W \to \R^\ell$ is $sc^\infty$ and
$B:\R^k\times\W \to \W$ is a {\bf contraction germ}:
For every $m\in\N_0$ and $\theta>0$ there exists $\epsilon_{m,\theta}>0$ such that
for all $v\in\R^k$ and $w_1,w_2\in\W$
with $|v|_{\R^k}, \|w_1\|_{W_m}, \|w_2\|_{W_m} \leq \epsilon_{m,\theta}$ we have
\begin{equation}\label{contraction}
\bigl\| B(v,w_1) - B(v,w_2) \bigr\|_{W_m} \leq \theta \| w_1 - w_2 \|_{W_m} 
\end{equation}
\end{itemize}
\end{enumlist}
\end{definition}

In classical Fredholm theory, the above contraction germ normal form exists automatically for a continuously differentiable map whose differential at $0$ is Fredholm. The following remark explains this in detail and explores the failure of the analogous statement for scale smooth maps with sc-Fredholm differential.

\begin{remark}[Comparison with classical nonlinear Fredholm property] \label{rmk:contract}
\rm
Suppose that $\Phi:\E\to\F$ is a $sc^\infty$ map whose differential $\rD\Phi(0):\E\to\F$ is Fredholm, and let $\W\subset\E$ be a complement of its kernel. 
Then by Definition~\ref{def:sc}~(ii) of the differential we have $\Phi( h )  = \Phi ( 0 ) + \rD\Phi(0) h + R(h)$ with 
$\|R(h)\|_0 \le \eps({\|h\|_1}) \|h\|_1$ for a function $\eps:\R^+ \to \R^+$ with $\lim_{\hbar\to 0}\eps(\hbar)\to 0$.

Moreover, let $g=\iota^{-1}\oplus \rD\Phi(0)^{-1} :\F \to \R^\ell \times \W$ be the isomorphism induced by the direct sum $\F=\iota(\R^\ell)\oplus \rD\Phi(0)(\W)$ for a choice of complement $\iota:\R^\ell\hookrightarrow\F$ of the image of $ \rD\Phi(0)$.
Then, writing $h=v+w\in \ker\rD\Phi(0) \oplus\W$ we obtain
$$
g\bigl(\Phi( v+ w )\bigr)  \;=\; g\bigl(\Phi(0)\bigr) + \bigl( \iota^{-1}\bigl(\pr_{\iota(\R^\ell)}R(v+w)\bigr) , w - B(v,w) ,
\bigl)
$$
where $B(v,w):=- \rD\Phi(0)^{-1}\bigl(\pr_{\rD\Phi(0)\W}R(v+w)\bigr)$ is a contraction with respect to shifted scales from $W_1$ to $W_0$. That is, given $\theta>0$ we have
$$
\bigl\| B(v,w_1) - B(v,w_2) \bigr\|_{W_0} \leq \theta \| w_1 - w_2 \|_{W_1} 
$$
 for $\|v+w_1\|_{E_{1}}$, $\|w_1-w_2\|_{W_{1}}$ sufficiently small.
Indeed, $\Phi:E_1\to F_0$ is classically continuously differentiable by \cite[Prop.1.2.1]{hwzbook}, so that the mean value inequality for $v+w_1, v+w_2\in E_1$ gives for some $t\in[0,1]$
\begin{align*}
R(v+w_1) - R(v+w_2) 
&\;=\;
\Phi(v+w_1) - \Phi(v+w_2) + \rD\Phi(0) ( w_2 - w_1 )  \\
&\;=\;
\bigl( \rD\Phi\bigl(v+w_1+ t(w_2-w_1)\bigr) - \rD\Phi(0) \bigr)  ( w_1 - w_2  ) .
\end{align*} 
The continuity of the differential $\rD\Phi : E_1 \to L(E_1, F_0)$ then gives
$$
\bigl\|\rD\Phi\bigl(v+w_1+ t(w_2-w_1)\bigr) - \rD\Phi(0) \bigr\|_{L(E_1, F_0)} \leq \tfrac{\theta}{\| \rD\Phi(0)^{-1}\circ \pr_{\rD\Phi(0)\W} \|}
$$ 
for $\|v+w_1+ t(w_2-w_1)\|_{E_1}\le \|v+w_1\|_{E_1} + \| w_2-w_1\|_{W_1}$ sufficiently small.
This proves the above shifted contraction property, and if $\Phi:W_0\to W_0$ was classically continuously differentiable, then the same estimates would hold with $W_1$ replaced by $W_0$, thus establishing a contraction. In the scale differentiable case the analogous estimates can be established on all scales -- providing contractions from $W_{m+1}$ to $W_m$. However, this shift prevents iteration arguments such as the proof of Banach's fixed point theorem -- crucial part of the implicit function theorem.

Delving a little deeper into scale differentiability, \cite[Prop.1.2.1]{hwzbook} guarantees that $\Phi$ restricts to classically $\cC^1$ maps $E_{m+1}\to F_m$ for all $m\in\N_0$, and moreover its differentials extend to bounded linear maps in $L(E_m,F_m)$, which however depend continuously only on $E_{m+1}$ in the sense of continuity of the map 
\begin{equation}  \label{weak diff}
E_{m+1}\times E_m \to F_m,  \qquad(x,e)\mapsto \rD\Phi(x) e.
\end{equation}
%
%
Now in many applications the map $x\mapsto \rD\Phi(x)$ is not just continuous in the sense of \eqref{weak diff}, but in the operator topology as a map $E_{m+1} \to L(E_m,F_m)$. This implies a contraction estimate
$\bigl\| B(v,w_1) - B(v,w_2) \bigr\|_{W_m} 
\leq \theta \| w_1 - w_2 \|_{W_m} $
for $\|v\|_{W_{m+1}}$, $\|w_1\|_{W_{m+1}}$, $\|w_2\|_{W_{m+1}}$ sufficiently small.
However, the shift in norms still prevents the use of Banach's fixed point
theorem for $w=B(v,w)$ since closed $W_{m+1}$-balls are not complete in the $W_m$-norm.
\end{remark}

The above remark shows that an implicit function theorem for maps with surjective differential only follows from standard techniques if $\Phi:E_m\to F_m$ is $\cC^1$ in the standard sense. However, for Cauchy-Riemann operators, this stronger differentiability will fail as soon as $E_m$ contains gluing parameters which act on functions by reparametrization. This, however, is usually the only source of non-differentiability, and after splitting off a finite dimensional space of gluing parameters one deals with classical $\cC^1$-maps on all scale levels. 
If their differential would depend continuously on the gluing parameters in the operator topology, then the linear transformation of Remark~\ref{rmk:contract} would bring $\Phi$ into the contraction germ normal form that is required for $\Phi$ to be sc-Fredholm. In applications, this is generally not quite the case, but some weaker continuity still holds and suffices to find a nontrivial bundle isomorphism into a contraction germ normal form.
This motivates the following definition, which is just slightly stronger than the definition via contraction germs, but should be more intuitive for Cauchy-Riemann operators in the presence of gluing.
In fact, in practice the Fredholm property is proven via this stronger differentiability, see e.g.\ \cite[Prop.4.8]{hwz:gw}, and Remark~\ref{rmk:hwz} below.
Here we denote open balls centered at $0$ in a level $E_m$ of a sc-Banach space by
$$
B_r^{E_m} := \bigl\{ e \in E_m \,\big|\, \| e\|_m < r \bigr\} \qquad \text{for}\; r>0.
$$

\begin{definition} \label{def:scfred}
Let $\Phi:\E\to\F$ be a $sc^\infty$ map between sc-Banach spaces $\E, \F$.
Then $\Phi$ is {\bf sc-Fredholm at $\mathbf{0}$ with respect to the splitting $\E\cong\R^d\times\E'$} if the following holds.
\begin{enumlist}
\item
$\Phi$ is regularizing as germ, that is for every $m\in\N_0$ there exists $\epsilon_m>0$ such that
$\Phi(e)\in F_{m+1}$ and $\|e\|_{E_m}\leq \epsilon_m$ implies $e\in E_{m+1}$.
\item
$\E\cong\R^d\times\E'$ is an sc-isomorphism and for every $m\in\N_0$ there exists $\eps_m>0$ such that $\Phi(r,\cdot) : B_{\eps_m}^{E'_m} \to F_m$ is differentiable for all $|r|_{\R^d}<\eps_m$. 

Moreover, the differential $\rD_{\E'}\Phi(r_0,e_0) : \E' \to \F$, $e \mapsto \frac{\rm d}{{\rm d}t} \Phi(r_0,e_0+ te)|_{t=0}$ in direction of $\E'$ has the following continuity properties for any fixed $m\in\N_0$:
\begin{enumerate}
\item[a)]
For $r\in B_{\eps_m}^{\R^d}$ the differential operator $B_{\eps_m}^{E'_m} \to L(E'_m,F_m)$, $e \mapsto  \rD_{\E'}\Phi(r,e)$ is continuous, and the continuity is uniform in a neighbourhood of $(r,e)=(0,0)$. That is, for any $\d>0$ there exists $0<\eps_{m,\d}\le\eps_m$ such that for all $r \in B_{\eps_{m,\d}}^{\R^d}$ and $e,e'\in B_{\eps_{m,\d}}^{E'_m}$ we have
$$
\bigl\| \rD_{\E'}\Phi(r,e) h  - \rD_{\E'}\Phi(r,e') h \bigr\|_{F_m}  \le \d \| h \|_{E'_m} \qquad \forall  h \in E'_m .
$$
\item[b)]
For sequences $\R^d \ni r^\nu\to 0$ and $e^\nu\in B^{E'_m}_1$ with $\bigl\| \rD_{\E'}\Phi(r^\nu,0) e^\nu \bigr\|_{F_m} {\underset{\scriptscriptstyle \nu\to\infty}{\longrightarrow} 0}$ there exists a subsequence such that $\bigl\| \rD_{\E'}\Phi(0,0) e^\nu \bigr\|_{F_m} \underset{\scriptscriptstyle \nu\to\infty}{\longrightarrow} 0$.
\end{enumerate}
\item
The differential $\rD_{\E'}\Phi(0,0) : \E' \to \F$ is sc-Fredholm as in Definition~\ref{fred op}.
Moreover, $\rD_{\E'}\Phi(r,0) : E'_0 \to F_0$ is Fredholm for all $|r|_{\R^d} < \eps_0$, with Fredholm index equal to that for $r=0$, and these operators are weakly regularizing in the sense that for all $|r|_{\R^d} < \eps_0$ and $e\in E'_0$ we have the implication
$$
 \rD_{\E'}\Phi(r,0) e \in F_1  \qquad \Longrightarrow \qquad e \in E'_1.
$$ 
%
%
\end{enumlist}
\end{definition}

\begin{remark}\label{rmk:hwz} \rm
We can compare the above definition with the analytic properties of the Cauchy-Riemann operator in the Gromov-Witten case, from which \cite{hwz:gw} deduces its polyfold Fredholm property.
The regularizing property (i) in both Definitions~\ref{def:HWZfred} and \ref{def:scfred} is proven in \cite[Prop.4.17]{hwz:gw}.
Then conditions that imply the contraction germ form (ii) in Definition~\ref{def:HWZfred} are abstractly stated in \cite[Prop.4.26]{hwz:gw} and proven in \cite[Prop.4.23, 4.25]{hwz:gw}. 

The differentiability in all but finitely many directions (ii) in Definition~\ref{def:scfred} 
is the second bullet of \cite[Prop.4.23]{hwz:gw}, but not explicitly assumed in \cite[Prop.4.26]{hwz:gw}. 
The continuity in (ii-a) requires more local uniformity than \cite[Prop.4.25]{hwz:gw} and \cite[Prop.4.26~(3)]{hwz:gw}, which correspond to our case $e=0$. 
The sc-Fredholm property at $(0,0)$ in (iii) is also required by the first bullet of \cite[Prop.4.23]{hwz:gw} resp.\ \cite[Prop.4.26~(1)]{hwz:gw}, which moreover requires the sc-Fredholm property with the same index for all sufficiently small $(r,e)$. The latter is stronger than our classical Fredholm, index, and regularization conditions for small $(r,0)$, but follows from (iii) together with the continuous differentiability (ii) and the techniques of \S\ref{s:lin}.
Conversely, the regularization property (in fact, in the stronger version of (ii) in Definition~\ref{fred op}) holds in general for linear sc-Fredholm operators by Lemma~\ref{sc is hwz} or \cite[Prop.1.1.10]{hwzbook}.

It remains to compare the continuity in (ii-b) of Definition~\ref{def:scfred} with the third bullet of \cite[Prop.4.23]{hwz:gw} resp.\ \cite[Prop.4.26~(2)]{hwz:gw}. 
The special case of $K=\{0\}$, $(a^\nu,v^\nu)=r^\nu \to 0$ and $z^\nu=0$ in the latter is the assertion in our setting that a subsequence of $e^\nu$ converges in $E_m$. This then implies (ii-b) due to the continuity of $\R^d\times E'_m\to F_m$, $(r,e)\mapsto\rD_{\E'}\Phi(r,0)e$.
On the other hand, condition (ii-b) implies this special case of \cite[Prop.4.23]{hwz:gw} due to the estimate arising from injectivity of $\rD_{\E'}\Phi(0,0)$ on a complement of its finite dimensional kernel.
The general case of $\rD_{\E'}\Phi(r^\nu,k^\nu)- z^\nu \to 0$ for nontrivial $k^\nu\in K\subset \E'$ and $r^\nu\to r^\infty$, or $\|z^\nu\|_{F_{m+1}}\le 1$ is not a direct consequence of our conditions. However, a germ version for $(r^\nu,k^\nu)\to 0$ might follow from the contraction germ property.
\end{remark}

Using the weak continuity properties of the partial differential in all but finitely many directions, we can extend the techniques of Remark~\ref{rmk:contract} to obtain a contraction germ normal form for maps that are sc-Fredholm with respect to a splitting, and thus prove that they are essentially sc-Fredholm operators. Note here that in applications of the scale Fredholm theory, e.g.\ the implicit function theorem of polyfold theory, the contraction germ property is necessary only from some fixed scale onwards, so that Newton iteration can be performed on each sufficiently high scale to find a smooth solution set in the subset of ``smooth points'' of the polyfold.

\begin{theorem} \label{thm}
Let $\Phi:\E\to\F$ be a $sc^\infty$ map that is sc-Fredholm at $0$ with respect to a splitting $\E\cong\R^d\times\E'$ in the sense of Definition~\ref{def:scfred}. Then it satisfies all conditions of Definition~\ref{def:HWZfred} except for the contraction \eqref{contraction} for $m=0$.
In particular, $\Phi|_{E_1}:(E_{m})_{m\in\N}\to (F_{m})_{m\in\N}$ is sc-Fredholm at~$0$ w.r.t.\ the induced scale structures on $E_1\subset E_0$ and $F_1\subset F_0$. 
\end{theorem}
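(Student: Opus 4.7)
The plan is to construct the contraction germ normal form of Definition~\ref{def:HWZfred} along the linear change of coordinates sketched in Remark~\ref{rmk:contract}, and then upgrade the scale-shifted estimate from that remark to a genuine contraction on every scale $m\ge 1$ by exploiting the enhanced regularity in condition (ii) together with the sc-Fredholm structure in (iii).

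First, I apply Lemma~\ref{sc is hwz} to the sc-Fredholm operator $T:=\rD_{\E'}\Phi(0,0)$ to obtain sc-direct sum decompositions $\E'=\ker T\oplus_{sc}\W$ and $\F=T(\E')\oplus_{sc}\cC$ with finite dimensional $\ker T\subset E'_\infty$ and $\cC\subset F_\infty$. Set $k:=d+\dim\ker T$ and $\ell:=\dim\cC$. I take $h:\E\cong\R^d\times\ker T\times\W\to\R^k\times\W$ to be the linear sc-isomorphism identifying $\R^d\times\ker T$ with $\R^k$ via a basis, and $g:\F\to\R^\ell\times\W$ the linear sc-isomorphism induced by the splitting together with $(T|_\W)^{-1}:T(\E')\to\W$ and a basis identification $\cC\cong\R^\ell$. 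Using the constant family $G:=(g_e\equiv g)_{e\in\cU}$ yields a strong bundle isomorphism essentially for free, since a family constant in $e$ is trivially strong sc-smooth. Writing $v=(r,k_0)$ and using $Tk_0=0$, the sc$^\infty$ remainder $R(v,w):=\Phi(r,k_0+w)-\Phi(0)-Tw$ gives $G\circ(\Phi-\Phi(0))\circ h^{-1}(v,w)$ the required shape $(A(v,w),\,w-B(v,w))$ with $A=\pr_{\R^\ell}\circ g\circ R$ sc$^\infty$ and $B(v,w)=-(T|_\W)^{-1}\pr_{T(\E')}R(v,w)$. Condition~(i) of Definition~\ref{def:scfred} is literally the regularizing germ property demanded by Definition~\ref{def:HWZfred}.

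The heart of the argument is the contraction estimate on each scale $m\ge 1$. Fix such an $m$ and $\theta>0$. Since (ii~a) upgrades $\Phi(r,\cdot):B_{\eps_m}^{E'_m}\to F_m$ to a classical $C^1$-map whose differential depends continuously on its base point, the mean value inequality gives
\[
B(v,w_1)-B(v,w_2)=-(T|_\W)^{-1}\pr_{T(\E')}\int_0^1[\rD_{\E'}\Phi(r,e_s)-T](w_1-w_2)\,\rd s
\]
with $e_s:=k_0+w_2+s(w_1-w_2)$. I split the integrand as $[\rD_{\E'}\Phi(r,e_s)-\rD_{\E'}\Phi(r,0)]+[\rD_{\E'}\Phi(r,0)-T]$. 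The first bracket applied to $w_1-w_2$ is bounded by $\delta\|w_1-w_2\|_{E'_m}$ uniformly in $s$ and small $r$, directly from the uniform continuity in (ii~a). The second bracket asks for operator norm smallness $\|[\rD_{\E'}\Phi(r,0)-T]|_\W\|_{L(W_m,F_m)}\to 0$ as $r\to 0$.

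This operator norm convergence is the main obstacle. The sc$^0$-continuity of the tangent map, viewed as the continuous evaluation $(x,e)\mapsto\rD\Phi(x)e:E_{m+1}\times E_m\to F_m$, only delivers pointwise convergence $\rD_{\E'}\Phi(r,0)h\to Th$ in $F_m$ for each fixed $h$, not uniform convergence on the $W_m$-unit ball. To upgrade, I argue by contradiction: suppose sequences $r^\nu\to 0$ and $h^\nu\in W_m$ with $\|h^\nu\|_{W_m}=1$ and $\|[\rD_{\E'}\Phi(r^\nu,0)-T]h^\nu\|_{F_m}\ge c>0$. The compact embedding $W_m\hookrightarrow W_{m-1}$ extracts a subsequence $h^\nu\to h^*$ in $W_{m-1}$; the pointwise convergence combined with uniform boundedness of $\rD_{\E'}\Phi(r^\nu,0)$ on $E'_{m-1}$ then forces $[\rD_{\E'}\Phi(r^\nu,0)-T]h^\nu\to 0$ in $F_{m-1}$. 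The role of (ii~b), applied at level $m$ together with the inverse estimate $\|h\|_{W_m}\le K_m\|Th\|_{F_m}$ supplied by $T|_\W$ being an sc-isomorphism, is to close this scale gap and propagate the vanishing into the $F_m$-norm, contradicting $c>0$. Once this is in hand, choosing $\eps_m$ so that each bracket contributes at most $(\theta/2)\|w_1-w_2\|_{W_m}$ delivers the contraction germ property for $m\ge 1$, and the final assertion about $\Phi|_{E_1}$ follows by relabelling scales.
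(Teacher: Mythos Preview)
Your construction with a \emph{constant} bundle isomorphism $G=(g_e\equiv g)$ built from $T=\rD_{\E'}\Phi(0,0)$ forces you to control the second bracket $[\rD_{\E'}\Phi(r,0)-T]$ in the operator norm $L(W_m,F_m)$, and this is where the argument breaks. The operator norm convergence $\|\rD_{\E'}\Phi(r,0)-T\|_{L(W_m,F_m)}\to 0$ as $r\to 0$ is \emph{not} a consequence of conditions (ii~a), (ii~b), (iii); in fact it fails in the Floer-theoretic application of Section~\ref{s:Ham}, where the shift terms $S_i(r)$ in $\rD_\E\Phi(r,0)-\rD_\E\Phi(0,0)$ are uniformly bounded but do not tend to zero on the unit ball (see the proof of Lemma~\ref{le2}, where convergence of $\|S_1(r^\nu)\xi_2^\nu\|$ is obtained only under the additional hypothesis that $\rD_\E\Phi(r^\nu,0)(\xi_1^\nu,\xi_2^\nu)\to 0$). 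Your contradiction argument does not supply this hypothesis: starting from $\|h^\nu\|_{W_m}=1$ and $\|[\rD_{\E'}\Phi(r^\nu,0)-T]h^\nu\|_{F_m}\ge c$, you never reach $\|\rD_{\E'}\Phi(r^\nu,0)h^\nu\|_{F_m}\to 0$, which is the premise of (ii~b). The inverse estimate for $T|_\W$ and the $F_{m-1}$-convergence you obtain from compactness do not bridge this gap.

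The paper's proof circumvents this obstacle by using a genuinely \emph{$r$-dependent} bundle isomorphism
\[
g_e(f)=\bigl(\Pi_C f,\,(\Pi_C^\perp\circ\rD_{\E'}\Phi(r,0)|_{W})^{-1}\Pi_C^\perp f\bigr),\qquad r=\pr_{\R^d}h(e).
\]
This removes the second bracket entirely: the resulting $B$ satisfies
\[
-\Pi_C^\perp\rD_{\E'}\Phi(r,0)\,B(r,k,w)=\Pi_C^\perp\bigl(\Phi(r,k+w)-\Phi(0,0)-\rD_{\E'}\Phi(r,0)\,w\bigr),
\]
so the contraction estimate reduces to $\|\rD_{\E'}\Phi(r,e_s)-\rD_{\E'}\Phi(r,0)\|$ with the \emph{same} $r$, controlled directly by (ii~a). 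Condition (ii~b) is then used not to prove operator norm convergence (which is false), but only to establish the uniform lower bound $\|w\|_{W_m}\le C_m\|\Pi_C^\perp\rD_{\E'}\Phi(r,0)w\|_{F_m}$ for small $r$ and $m\ge 1$, via a contradiction argument that does arrive at the premise $\|\rD_{\E'}\Phi(r^\nu,0)w^\nu\|_{F_m}\to 0$ after first exploiting finite-dimensionality of $C$ and the $F_{m-1}$-limit. The price is that one must verify that this family of isomorphisms assembles into a strong bundle isomorphism in the sense of Definition~\ref{def:HWZfred}, but that is routine.
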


\begin{proof}
Since $\rD_{\E'}\Phi(0,0)$ is sc-Fredholm, Lemma~\ref{sc is hwz} provides sc-direct sums 
$$
\E' = \ker \rD_{\E'}\Phi(0,0) \oplus_{sc} \W , \qquad
\F= \im \rD_{\E'}\Phi(0,0) \oplus_{sc} C ,
$$
with $\W=(W_m:=W_0\cap E_m)_{m\in\N_0}$ and a finite dimensional subspace $C\subset F_\infty$.

Denote by $\Pi_C:\F\to C$ and $\Pi_C^\perp:=\id_\F-\Pi_C : \F \to \im \rD_{\E'}\Phi(0,0)$ the sc$^0$ projections to the factors, then we claim that for small $\eps'_0>0$ we obtain isomorphisms 
\begin{equation}\label{eq:iso}
\Pi_C^\perp \circ \rD_{\E'}\Phi(r,0)|_{W_0} \,:\;  W_0 \;\overset{\cong}{\longrightarrow}\;  \im \rD_{\E'}\Phi(0,0)  \qquad\forall |r|_{\R^d}\le \eps'_0 
\end{equation}
satisfying uniform estimates for all $m\ge 1$ with some $\eps'_m>0$, 
\begin{equation}\label{eq:est}
 \| w \|_{W_m}  \;\le\; C_m \bigl\| \Pi_C^\perp \, \rD_{\E'}\Phi(r,0)\, w \bigr\|_{F_m} \qquad \forall  |r|_{\R^d}\le \eps'_m , w \in W_m.
\end{equation}
Both hold by construction for $r=0$, so our claim is that, for a possibly larger constant $C_m$, they continues to hold for $ |r|_{\R^d}\le \eps'_m$ sufficiently small.
Note moreover that -- due to the finite codimensional restrictions in domain and target and the Fredholm condition (iii) on the map $\Phi$ -- we are dealing with sc-operators of Fredholm index 
\begin{align*}
&\ind \Pi_C^\perp\circ\rD_{\E'}\Phi(r,0)|_{W_0}
\;=\;
\ind \rD_{\E'}\Phi(r,0) - \dim\ker \rD_{\E'}\Phi(0,0) + \dim C \\
&\;=\;
\ind \rD_{\E'}\Phi(0,0) - \dim\ker \rD_{\E'}\Phi(0,0) + \dim \tfrac{F_0}{\im\rD_{\E'}\Phi(0,0)}
\;=\; 0.
\end{align*}
%
%
Hence for the isomorphism property \eqref{eq:iso} is suffices to prove injectivity of $\Pi_C^\perp\circ\rD_{\E'}\Phi(r,0)|_{W_0}$. This injectivity follows from the weak regularization property in Definition~\ref{def:scfred}~(iii), 
which for $w\in W_0$ guarantees
$$
\Pi_C^\perp\, \rD_{\E'}\Phi(r,0) w = 0 
\quad\Rightarrow\quad  \rD_{\E'}\Phi(r,0) w \in C \subset F_\infty 
\quad\Rightarrow\quad w\in E'_1 \cap W_0 , 
$$
and the estimate \eqref{eq:est} for $m=1$ on $E'_1 \cap W_0 = W_1$.
So it remains to prove \eqref{eq:est} for $r\ne 0$ and $m\geq 1$. For that purpose consider by contradiction sequences ${r^\nu\to 0}$, $\|w^\nu\|_{W_m} = 1$ for a fixed $m\in\N$ with $\|\Pi_C^\perp \, \rD_{\E'}\Phi(r^\nu,0)\, w^\nu \|_{F_m} \to 0$. 
This yields a uniform bound on
\begin{align*}
& \bigl\| \rD_{\E'}\Phi(r^\nu,0)\, w^\nu \bigr\|_{F_{m-1}} 
\;=\;  \bigl\| \rD \Phi(r^\nu,0)\, (0, w^\nu ) \bigr\|_{F_{m-1}} \\
&\quad \leq \bigl\| \rD\Phi(0,0) \bigr\|_{\cL(E_m,F_{m-1})}  + \bigl\| \rD \Phi(r^\nu,0) - \rD\Phi(0,0) \bigr\|_{\cL(E_m,F_{m-1})}
\end{align*}
since $\|(0,w^\nu)\|_{E_m}=\|w^\nu\|_{W_m}=1$.  
Indeed, the first summand is independent of $\nu$ and the second summand converges to zero as $\nu\to\infty$ since the $sc^1$-regularity of $\Phi$ in particular guarantees continuity of $ \rD\Phi : E_m \to \cL(E_m,F_{m-1})$ for $m\geq 1$; see \cite[Prop.1.2.1]{hwzbook}. 
This guarantees a bound on $\Pi_C \, \rD_{\E'}\Phi(r^\nu,0)\, w^\nu \in C$ in any norm on the finite dimensional subspace $C \subset F_\infty$, since $\Pi_C$ is a bounded map and all norms are equivalent in finite dimensions. Thus we can find a subsequence (again indexed by $\nu$) so that $\Pi_C \, \rD_{\E'}\Phi(r^\nu,0)\, w^\nu \to c_\infty \in C$ converges in $F_{m}$. Together with the contradiction assumption this implies $\bigl\| \rD_{\E'}\Phi(r^\nu,0)\, w^\nu - c_\infty \bigr\|_{F_m} \to 0$.

Moreover, since $m\geq 1$ so that $W_m\hookrightarrow W_{m-1}$ is compact, we find another subsequence so that $w^\nu\to w^\infty \in W_{m-1}$ converges in $W_{m-1}\subset E_{m-1}$.
This means that we have $(r^\nu,0,0,w^\nu) \to (0,0,0,w^\infty)$ converging on the $k=(m-1)$-th level of the scale tangent space $\rT(\R^d\times \E') = \bigl( \R^d \times E_{k+1} \times \R^d \times E_k\bigr)_{k\in\N_0}$, so that sc$^1$ regularity of $\Phi$ implies $F_{m-1}$-convergence $\rD_{\E'}\Phi(r^\nu,0)\, w^\nu \to \rD_{\E'}\Phi(0,0)\, w^\infty$.
In particular, since $F_m\subset F_{m-1}$ embeds continuously, we have $\rD_{\E'}\Phi(0,0)\, w^\infty = c_\infty \in C$, but since $w^\infty \in W_{m-1}$ by construction maps to the complement of $C$, this implies $c_\infty = 0$. 
So we have used the scale smoothness and injectivity of $\Pi_C^\perp \, \rD_{\E'}\Phi(0,0)$ to strengthen the assumption to $\bigl\| \rD_{\E'}\Phi(r^\nu,0)\, w^\nu \bigr\|_{F_m} \to 0$. At this point we can use the continuity property (ii-b) to deduce $\bigl\| \rD_{\E'}\Phi(0,0)\, w^\nu \bigr\|_{F_m} \to 0$ for a subsequence, so that finally \eqref{eq:est} for $r=0$ implies $\|w^\nu\|_{W_m}\to 0$ in contradiction to the assumption. 
This proves \eqref{eq:est} for $|r|_{\R^d}$ sufficiently small, in particular, it implies that $\Pi_C^\perp\,\rD_{\E'}\Phi(r,0)$ is an injective semi-Fredholm operator, and by the above index calculation, it is in fact an isomorphism which proves \eqref{eq:iso}.

Now an isomorphism $h:\E\cong\R^d\times\E' \to \bigl( \R^d \times \ker \rD_{\E'}\Phi(0,0) \bigr) \times \W$ of the base is given by splitting off the kernel of $\rD_{\E'}\Phi(0,0)$ from $\E'$ and adding it to the finite dimensional parameter space. So since the first factor $\R^d \times \ker \rD_{\E'}\Phi(0,0)$ is finite dimensional, we can equip it with the $E_0$-norm and find a bounded isomorphism to some $\R^k$. This is an sc-isomorphism since all $E_m$-norms restricted to the finite dimensional $\ker \rD_{\E'}\Phi(0,0)$ are equivalent.
Next, we obtain a bundle isomorphism $G=(g_e)_{e\in\cU}$ over $\cU:=\bigl\{ e\in E_0 \,\big|\, h(e) \in B^{\R^d}_{\eps'_0} \times  \ker \rD_{\E'}\Phi(0,0) \times \W\bigr\}$ by 
$$
G :  \cU \times \F \to  C \times \W , \;\;\;
(e,f) \mapsto \bigl( \Pi_C f \,,\, \bigl(\Pi_C^\perp \circ \rD_{\E'}\Phi( \pr_{\R^d}(h(e)),0)\bigr)^{-1} \Pi_C^\perp f  \bigr).
$$
Here the first factor $C$ is finite dimensional, hence sc-isomorphic to some $\R^\ell$.
Moreover, this map has the strong sc$^\infty$ regularity as germ near $\{0\}\times \F$ because
\begin{align*}
\bigl( B^{\R^d}_{\eps'_0} \times \ker \rD_{\E'}\Phi(0,0) \times W_m  \times F_{m+i} \bigr)_{m\in\N_0} & \;\to\;  \bigl( C \times W_{m+i} \bigr)_{m\in\N_0} , \\
(r,k,w, f) \;\mapsto\; &\bigl( \, \Pi_C f \,,\, \bigl(\Pi_C^\perp \circ \rD_{\E'}\Phi(r,0)\bigr)^{-1} \Pi_C^\perp f  \,\bigr)
\end{align*}
is independent of $k,w$, so that scale smoothness for $i\ge -m$ follows by the chain rule since $\Pi_C$, $\Pi_C^\perp$ are linear sc-operators and $(r,f)\mapsto \bigl(\Pi_C^\perp \circ \rD_{\E'}\Phi(r,0) \bigr)^{-1} f$  is a parametrized inverse to an sc$^\infty$ map, so that the usual formula for the derivative of an inverse proves scale smoothness.
These isomorphisms transform $\Phi$ to the map 
\begin{align*}
& \quad G\circ\bigl(\Phi-\Phi(0,0)\bigr)\circ h^{-1} : \; \R^d \times \ker \rD_{\E'}\Phi(0,0) \times \W \;\to\; C \times \W, \\
& (r,k,w) \;\mapsto\; g_{h^{-1}(r,k,w)} \bigl( \Phi(r,k+w) - \Phi(0,0) \bigr) =:  \bigl( A(r,k,w) , w - B(r,k,w) \bigr) .
\end{align*}
Here $A: \R^d \times \ker \rD_{\E'}\Phi(0,0) \times \W \to C$ and
$B: \R^d \times \ker \rD_{\E'}\Phi(0,0) \times \W \to \W$,
\begin{align*}
A(r,k,w) &:= \Pi_{C} \bigl( \Phi(r,k+w) - \Phi(0,0) \bigr)  ,\\
B(r,k,w) &:= w - \bigl( \Pi_C^\perp \circ \rD_{\E'}\Phi(r,0) \bigr)^{-1}\, \Pi_C^\perp \,\bigl( \Phi(r,k+w) - \Phi(0,0) \bigr) 
\end{align*}
are $sc^\infty$ by construction.
So it remains to establish the contraction germ property for $B$ for a fixed $m\in\N$ and $\theta>0$.  Note that we have
$$
-  \Pi_C^\perp \, \rD_{\E'}\Phi(r,0) \, B(r,k,w) = \Pi_C^\perp \,\bigl(
\Phi(r,k+w) - \Phi(0,0) - \rD_{\E'}\Phi(r,0) w \bigr),
$$
so that we can estimate, using the uniform bound \eqref{eq:est},  
\begin{align*}
& \bigl\| B(r,k, w_1) - B(r,k, w_2) \bigr\|_{W_m=W\cap E'_m} \\
&\qquad\quad\leq C_m \bigl\| \Pi_C^\perp \, \rD_{\E'}\Phi(r,0) B(r,k, w_1) - \Pi_C^\perp \,\rD_{\E'}\Phi(r,0) B(r,k, w_2) \bigr\|_{F_m} \\
&\qquad\quad= C_m \bigl\| \Pi_C^\perp\, \bigl(
\Phi(r ,k+w_1) - \Phi(r,k+w_2)
- \rD_{\E'}\Phi(r,0) (w_1-w_2) \bigr) \bigr\|_{F_m} \\
&\qquad\quad\leq C'_m \bigl\| \Phi(r,k+w_1) - \Phi(r,k+w_2)
- \rD_{\E'}\Phi(r,0) (w_1-w_2) \bigr\|_{F_m} 
\end{align*}
To prove the contraction germ property, let $\theta>0$ be given. Then we must bound the last expression by $\theta\|w_1-w_2\|_{W^m}$ for $\|(r,k)\|_{\R^d \times \ker \rD_{\E'}\Phi(0,0)}$, $\|w_1\|_{W^m}$, and $\|w_2\|_{W^m}$ sufficiently small.
For that purpose we will use the differentiability of $\Phi(r,\cdot) : B_{\eps_m}^{E'_m} \to F_m$ for $|r|_{\R^d}<\eps_m$.
By the triangle inequality, $\|k\|_{E'_m}$, $\|w_1\|_{E'_m}$, $\|w_2\|_{E'_m} < \frac 12\eps_m$ guarantees that $k+ \lambda w_1 + (1-\lambda) w_2 \in B_{\eps_m}^{E'_m}$ for all $\lambda\in[0,1]$.
Then $[0,1]\to F_m$, $\lambda \mapsto \Phi(r,k+ \lambda w_1 + (1-\lambda) w_2)$ is continuously differentiable, and hence we have 
\begin{align*}
&\Phi(r,k+w_1) - \Phi(r,k+w_2) - \rD_{\E'}\Phi(r,0) (w_1-w_2) \\
&\qquad\qquad= \int_0^1 \partial_\lambda \Phi\bigl(r,k+ \lambda w_1 + (1-\lambda) w_2\bigr)  \,{\rm d}\lambda  \;\; - \; \rD_{\E'}\Phi(r,0) (w_1-w_2)  \\
&\qquad\qquad= \int_0^1 \bigl( \rD_{\E'}\Phi(r,k+ \lambda w_1 + (1-\lambda) w_2)  - \rD_{\E'}\Phi(r,0) \bigr)  (w_1 - w_2) \,{\rm d}\lambda .
\end{align*}
%
%
Making use of the continuity properties of the differential in (ii-a) we may choose $0<\eps'_m\le \frac 12 \eps_m$ sufficiently small so that, with the given $\theta>0$ and constant $C_m$ from \eqref{eq:est}, we obtain
$$
\bigl\| \rD_{\E'}\Phi(r,k+w) - \rD_{\E'}\Phi(r,0)  \bigr\|_{L(E'_m,F_m)} \leq \tfrac{\theta}{C'_m} \quad
\forall |r|_{\R^d} , \|k\|_{E'_m} , \|w\|_{E'_m} \leq \eps'_m .
$$
Applying this estimate to $w= \lambda w_1 + (1-\lambda) w_2$ for $\|w_1\|_{E'_m}, \|w_2\|_{E'_m}\leq \eps'_m$ and $\l\in[0,1]$ then yields
$$
\bigl\| \Phi(r,k+w_1) - \Phi(r,k+w_2) - \rD_{\E'}\Phi(r,0) (w_1-w_2) \bigr\|_{F_m} \leq \tfrac {\theta}{C'_m} \|w_1-w_2\|_{E'_m}.
$$
Recalling that the $E'_0$ and $E'_m$ norms on $\ker \rD_{\E'}\Phi(r,0)$ are equivalent, we finally find $0<\d_m\le \eps'_m$ so that $\|(r,k)\|_{\R^d\times E'_0} < \d_m$ guarantees $|r|_{\R^d},  \|k\|_{E'_m}  \leq \eps'_m$. 
Now combining the above yields the contraction property: Given $m\in\N$ and $\theta>0$ we found $\d_m>0$ so that $\|(r,k)\|_{\R^d\times E'_0}$, $\|w_1\|_{E'_m}$, $\|w_2\|_{E'_m} < \d_m$ implies
\begin{align*}
& \bigl\| B(r,k, w_1) - B(r,k, w_2) \bigr\|_{W_m} \\
&\qquad\qquad\qquad \leq C'_m  \bigl\| \Phi(r,k+w_1) - \Phi(r,k+w_2) - \rD_{\E'}\Phi(0,0) (w_1-w_2) \bigr\|_{F_m}  \\
&\qquad\qquad\qquad \leq C'_m \cdot \tfrac {\theta}{C'_m} \|w_1-w_2\|_{E'_m} \;=\; \theta \cdot  \|w_1-w_2 \|_{W_m = W\cap E'_m} 
\end{align*}
This establishes the contraction germ property for $m\ge 1$ and hence shows that $\Phi|_{E_1}$ is sc-Fredholm at $0$ in the sense of Definition~\ref{def:HWZfred}.
\end{proof}

\begin{remark} \rm 
Conversely, in order to show that an sc$^\infty$ map $\Phi$ that is sc-Fredholm at $0$ is also sc-Fredholm at $0$ with respect to a splitting, the natural choice of splitting would be $\E\cong\R^k\times\W$, hoping that the contraction germ property for $B:\R^k\times\W\to\R^k$ implies differentiability in the direction of $\W$. 
However, this would require a stronger notion of contraction: For fixed $m\in\N_0$ we would need $\d_m>0$ independent of $\theta>0$, so that for any $\theta>0$ there exists $\epsilon_m>0$ so that for $\|v\|_{\R^k}$, $\|w_1\|_{W_m}$, $\|w_2\|_{W_m}\leq \delta_m$ we have contraction
\begin{align*}
\|w_1- w_2\|_{W^m} \leq \epsilon_m \; \Longrightarrow\; 
\bigl\| B(v,w_1) - B(v,w_2) \bigr\|_{W_m} \leq \theta \| w_1 - w_2 \|_{W_m} 
\end{align*}
This, however, would imply $\partial_{W_m} B \equiv 0$ in the $\delta_m$-neighbourhood.
So there seems to be no natural condition under which sc-Fredholm maps in the sense of Definition~\ref{def:HWZfred} are also sc-Fredholm with respect to a splitting as in Definition~\ref{def:scfred}.
\end{remark}

\section{Fredholm property in Hamiltonian Floer theory}
\label{s:Ham}

The full polyfold setup for Hamiltonian Floer theory will be a corollary of the polyfold setup for Symplectic Field Theory in \cite{fh-sft-full}. The purpose of this section is to demonstrate the polyfold Fredholm property of Floer's equation in the simplest setting that still captures the main subtleties, without going into the abstract notions of polyfold bundles.\footnote{
We use the appropriate language of polyfold theory but aim the exposition at non-experts, who may interpret unfamiliar terms intuitively or with the help of Appendix \ref{sec:polyfold}.}
For that purpose we restrict our attention to a neighbourhood of a once broken Floer trajectory in a simple geometric setting given at the beginning of \S\ref{ss:preglue}. We construct an ambient polyfold for this neighbourhood and a section with this zero set in \S\ref{ss:preglue} and \S\ref{ss:filling}, along with giving a rough idea of the abstract notions of polyfold and scale smooth section of a polyfold bundle. 
We then define the Fredholm property of the given section in terms of the sc-Fredholm property of a local ``filled section'' with the same zero set, which is rigorously stated in Theorem~\ref{mainthm} . (For a more rigorous introduction to the abstract polyfold notions see \cite{theguide}.) 
Finally, we give rigorous proofs of the less conventional Fredholm conditions (ii-a),(ii-b),(iii) of Definition~\ref{def:scfred} in \S\ref{ss:proof}. 

The adjustments needed for general once broken Floer trajectories will be indicated in remarks. 
Moreover, the preliminary section \S\ref{ss:fluff} -- which should be skipped by readers intent on rigorous content -- gives an overview of how the challenges of developing Floer theory in general symplectic manifolds (notably arising from sphere bubbling) are addressed by polyfold theory. In particular, we relate the classical gluing analysis to elements of the polyfold Fredholm notion, and explain why the estimates proven in \S\ref{ss:proof} are the key analytic ingredients for generalizing the polyfold setup for Gromov-Witten theory \cite{hwz:gw} to Floer theory. 

Readers interested in a polyfold setup for Morse theory can throughout replace ``(Hamiltonian) Floer'' by ``Morse'' and ``Cauchy-Riemann'' by ``gradient flow'', and instead of the first two paragraphs of \S\ref{ss:fluff} refer to e.g.\ \cite{schwarz:Morse} for an introduction. We will then indicate further simplifications in the series of remarks that also discusses the adjustments to general broken ``Floer-read-Morse'' trajectories.

\subsection{Historical context, polyfold overview, and sphere bubbling} \label{ss:fluff}

Hamiltonian Floer theory was introduced by Floer \cite{floer} in the 1980s to prove the Arnold conjecture \cite{floer:Arnold} for a class of symplectic manifolds in which energy concentration can be excluded. 
Textbook treatments in such settings are \cite{AD2010,Salamon,Sch}. 

During the 1990s, Floer theory was quickly gearneralized to all compact symplectic manifolds, where the main challenge was to regularize the pseudoholomorphic spheres which are the geometric result of energy concentration. 
However, due to the haste of the development some foundational issues were overlooked, as discussed in \cite{McDuffWehrheim}, and as of 2025 no general and generally understood proof seems to be published. 
Polyfold versions of the two basic proof approaches in \cite{floer} (deformation to the Morse complex) and \cite{pss} (direct isomorphism to homology) have been given in \cite{Zhengyi} and \cite{AFW} but rest on the pending polyfold Fredholm setup for SFT moduli spaces in \cite{fh-sft-full}.
While they, too, may not reach general understanding, the aim of these and the present paper is to demonstrate to non-experts how proofs based on regularization of moduli spaces can be made fully rigorous in the polyfold context while still following an intuitive line of argument.
As such we ought to include sphere bubbling in the discussion of the Fredholm property. 

Indeed, the compactifications of Floer's moduli spaces will generally contain multiply broken trajectories to which various trees of sphere bubbles are attached. Each such point in the moduli space is parametrized by a tuple of cylinders and spheres. (In the Morse case, cylinders are replaced by $\R$ and bubbles are absent.)
Moreover, any abstract regularization scheme such as polyfold theory must work with compact moduli spaces.\footnote{The basic example of abstract regularization (not to be confused with the regularizing property in Definitions~\ref{fred op} and \ref{def:scfred}) is the fact that the Euler class of a vector bundle is represented by the zero set of any transverse section -- if this zero set is compact. See \cite{McDuffWehrheim,theguide} for discussions of abstract and geometric regularization approaches.} Thus a full polyfold setup for Floer's moduli spaces requires a polyfold Fredholm section whose zero set is the compactified moduli space.
While this requires the construction of a section in a global polyfold bundle, its Fredholm property is local in the sense that we need to consider the section of a ``trivial polyfold bundle" over a polyfold chart, whose zero set is homeomorphic to an open neighbourhood in the compactified moduli space of a given multiply broken trajectory with sphere bubbles.

Prior to polyfold theory, these neighbourhoods would be described by a gluing theorem which combines classical Fredholm descriptions of the pieces -- Floer trajectories parametrized by cylinders and pseudoholomorphic spheres -- modulo reparametrizations of their domains. 
There are two analytic challenges in this which are also reflected in the polyfold approach, though the overall result is different. Classical gluing provides a homeomorphism from a fiber product of the individual moduli spaces, together with some gluing parameters, to an open subset of the uncompactified moduli space that is considered a neighbourhood of the broken/nodal trajectories that are being described.\footnote{If the individual moduli spaces are not cut out transversely, then the construction is still possible on the level of thickened, transversely cut out, moduli spaces. However, this requires choices of (spaces of) perturbations that achieve local transversality. While this local equivariant transversality can often be achieved, the dependence on choices yields a nontrivial notion of compatibility between charts -- the Kuranishi context, which has proven to be unexpectedly topologically treacherous in the later stages of regularization \cite{McDuffWehrheim}.}
The polyfold version of gluing provides a Fredholm section (in a new sense) whose zero set contains both the given broken/nodal trajectories and their neighbourhood of unbroken trajectories, thus cutting out an open subset of the compactified moduli space.

The first analytic challenge in both contexts is the lack of differentiability of the reparametrization action, which gets divided out in the moduli space (see \S\ref{s:calc} and \cite{theguide,McDuffWehrheim}). In the classical setting, this is resolved by finding perturbations of the Cauchy-Riemann operator\footnote{We will use ``Cauchy-Riemann operator" to refer to both the nonlinear differential operators $\CR_J$ given by the anti-holomorphic part of the differential of a map and its perturbation $\pbar$ by a Hamiltonian term in Floer's equation \eqref{eq:Floer}.
} 
that are both transverse and equivariant, so that one only has to take the quotient by this action on the solution space, where it is smooth.
In the polyfold setting, scale calculus allows to take the quotient on the general Banach space of maps. Then the Fredholm property of the Cauchy-Riemann operator has to be proven after restriction to a local slice of the action. Fortunately, sc-Fredholmness with respect to a trivial splitting in the sense of Definition~\ref{def:scfred} can be quickly deduced from well known properties of Cauchy-Riemann operators on appropriate Sobolev spaces of maps: 

\begin{enumlist}
\item
The regularization property follows from elliptic regularity. 
\item
The Cauchy-Riemann operator is not just differentiable with uniformly continuous derivatives, but in fact classically smooth w.r.t.\ appropriate Sobolev norms.
\item
Linearized Cauchy-Riemann operators are sc-Fredholm in the setting given after Definition~\ref{fred op}, i.e.\ they are 
(i) bounded between appropriate Sobolev spaces, (ii) regularizing by elliptic regularity, (iii) classically Fredholm.
All of these properties are preserved by restriction to a local slice; notably the Fredholm property because the local slice has finite codimension in the domain.
\end{enumlist}

The second analytic challenge is the fact that broken/nodal Floer trajectories are parametrized by different Riemann surfaces (unions of cylinders and spheres) than the nearby unbroken Floer trajectories, whose domain is a single cylinder. 
Classically, this is resolved by a pregluing construction which produces from a broken/nodal solution and choices of gluing parameters a map from a single cylinder, which solves Floer's equation to a small error on those parts of its domain where pregluing interpolates between different parts of the broken/nodal solution. Then a Newton iteration -- based on several subtle estimates -- is used to find a nearby exact solution. 
In the polyfold setting, the same analytic ingredients are used for different purposes:
The pregluing construction becomes a chart map which equips an ambient space of the compactified moduli space with a smooth structure; see \S\ref{ss:preglue}. 
The estimates going into the Newton iteration are part of the sc-Fredholm property, in particular ``quadratic estimates'' for varying the base point in linearizations of the Cauchy-Riemann operator, are very similar to those required as continuity of the differential in condition (ii) of the Definition~\ref{def:scfred}; see \S\ref{ss:proof}.

Rather than going deeper into the polyfold context, note that both pregluing and quadratic estimates are local considerations near a breaking or node. Thus we expect that a full polyfold setup -- just like a general gluing theorem -- near a complicated broken and nodal Floer trajectory can be pieced together from the setups for single breakings and nodes. That is, the polyfold Fredholm analysis is not just local in the moduli space but also domain-local in the sense that (most of) it can be performed by localizing to open subsets of the varying domains of the PDE. 
The analysis near a node is part of the polyfold Fredholm setup for Gromov-Witten theory in \cite{hwz:gw}.\footnote{ In particular, \cite{hwz:gw} describes trees of spheres attached to another pseudoholomorphic curve. In our context, the sphere trees are attached to a solution of the Floer equation, in which a zeroth order Hamiltonian term is added to the first order Cauchy-Riemann operator. This does not affect polyfold Fredholm properties, just as compact perturbations do not affect classical Fredholm properties.}
So it remains to discuss the polyfold setup for trajectory breaking at a fixed Hamiltonian orbit and the scale calculus of the Floer equation on a cylindrical end; and both can be done in a trivialized tubular neighbourhood of the orbit.

\subsection{Pregluing as polyfold chart map near broken trajectories} \label{ss:preglue}

We consider the simplified setting of a Morse function $H\in\cC^\infty(\C^n)$ 
which induces an autonomous Hamiltonian vector field $X:\C^n \to \C^n$
with nondegenerate (hence isolated) critical point $0\in{\rm Crit}(H)$. We moreover fix an almost complex structure $J\in\cC^\infty\bigl(\C^n, \Hom(\C^n,\C^n)\bigr)$, which induces a metric on $\C^n$ so that $X=J\nabla H$. 

A full polyfold Fredholm description of the (always compactified) Floer moduli space $\bM$ would be a Fredholm section $s:\cB\to\cE$ of a polyfold bundle $\cE\to\cB$ whose zero set $s^{-1}(0)\simeq\bM$ is homeomorphic to the moduli space. 
We localize our goal to describing a neighbourhood $s^{-1}(0)\simeq\cU\subset\bM$ in the moduli space near a once broken Floer trajectory from $c_0$ to $c_0$, broken at $c_0:S^1\to X, t\mapsto 0$. 
 
More precisely, a broken trajectory as stipulated is a pair $([\g_1],[\g_2])$ of solutions $\g_1,\g_2 \in\cC^\infty(\R\times S^1 ,\C^n)$ modulo reparametrizations\footnote{
Throughout, $[\cdot]$ denotes equivalence classes w.r.t.\ reparametrization by shifts in $\R$.
Since nontrivial Floer trajectories, by unique continuation, cannot be invariant under nontrivial $\R$-shifts, all our discussions will be in the realm of trivial isotropy. Thus all polyfolds will be special cases of trivial isotropy, also called M-polyfolds. 
The need for polyfolds in Floer theory arises only from the nontrivial isotropy of multiply-covered sphere bubbles. 
} 
of Floer's equation 
\begin{equation}\label{eq:Floer}
\pbar \g_i :=\partial_s \g_i + J(\g_i)\bigl( \partial_t \g_i - X(\g_i) \bigr) = 0 
\end{equation}
w.r.t.\ coordinates $(s,t)\in\R\times S^1$, with $\lim_{s\to\pm \infty}\g_i(s,\cdot)= c_0$ and finite energy
$$ \textstyle
\tfrac 12 \int_{\R\times S^1} | \partial_s \g_i |^2 + | \partial_t \g_i - X(\g_i) |^2 \;< \infty .
$$
An $\eps$-neighbourhood of the broken trajectory $([\g_1],[\g_2])$ in the Floer moduli space $\bM$ consists of those broken and unbroken trajectories that can be expressed as $([\g_1+\xi_1],[\g_2+\xi_2])$ or $[\oplus_R (\g_1+\xi_1, \g_2+\xi_2)]$ with $\|\xi_1\|, \|\xi_2\|<\eps$ and $R>e^{1/\eps}$.
The induced topology on the compactified moduli space is independent of the exact choice of norm $\|\cdot\|$, and so is mainly characterized by the {\bf pregluing} 
\begin{equation} \label{eq:preg}
\oplus_R(\xi_1,\xi_2) \,:=\; \b \cdot \tau_R \, \xi_1 + (1-\b) \cdot  \tau_{ - R} \, \xi_2 .
\end{equation}
Here $\beta\in\cC^\infty(\R,[0,1])$ is a cutoff function with $\beta|_{(\mi \infty,-1]}\equiv 1$ and $\beta|_{[1,\infty)}\equiv 0$ and  $\tau_R \, \xi (s,t) := \xi(R+s,t)$
denotes reparametrization by shifts in $\R$ direction.

\begin{remark}\rm  \label{rmk:gen1}
Morse theory also fixes a Morse function $H$ and a metric on $\C^n$. Then the gradient vector field $\nabla H$ replaces $-JX$ throughout, and we study $S^1$-invariant solutions $\g_i\in\cC^\infty(\R,\C^n)$ so that \eqref{eq:Floer} simplifies to $\partial_s \g_i + \nabla H(\g_i)=0$. Now we only assume $\lim_{s\to\infty}\g_1(s)=c_0=\lim_{s\to-\infty}\g_2(s)$ to allow for nonzero (but finite) energy $\int_\R \bigl| \partial_s \g_i \bigr|^2 = \lim_{s\to \infty} \g_i(s) - \lim_{s\to-\infty}\g_i(s)$. 

To describe the topology on the Floer moduli space near a general broken trajectory $([\g_+],[\g_-])$ of a periodic Hamiltonian on a symplectic manifold $M$, let $c_0:S^1\to M$ denote the common limit $c_0(t)=\lim_{s\to\pm\infty}\g_\pm(s,t)$.
We pick identifications $\g_\pm(\pm s,\cdot)^*\rT M \simeq c_0^*\rT M$ for large $s$ and a metric on $M$, which induces an exponential map.
Then for sufficiently small $\eps>0$ (which guarantees exponential maps), an $\eps$-neighbourhood of $([\g_+],[\g_-])$ consists of $([\exp_{\g_+}(\xi_+)],[\exp_{\g_-}(\xi_-)])$ and $[\exp_{\ti\g_R}(\oplus_R (\xi_+, \xi_-))]$ for small sections $\xi_\pm$ of $\g_\pm^*\rT M$ and large $R$. 
Here $\ti\g_R$ is an approximate concatenation of $\g_-$ and $\g_+$, which in the absence of a natural affine structure on $M$ is constructed as $\ti\g_R(s,t):=\ti\g_\pm(\pm R + s, t)$ for  $\pm s \leq 0$, where $\ti\g_+|_{(e^{1/\eps}-1, e^{1/\eps}) \times S^1}$ is a reparametrization of $\g_+|_{(e^{1/\eps}-1,\infty)\times S^1}$ and on the complement we set 
$\ti\g_+(s,t):=\g_+(s,t)$ for $s\le e^{1/\eps}-1$ and $\ti\g_+(s,t):=c_0(t)$ for $s\ge e^{1/\eps}$. 
Analogously, $\ti\g_-$ is obtained by reparametrizing $\g_-|_{(e^{1/\eps}-1,\infty)\times S^1}$ and setting $\ti\g_-(s,t):=c_0(t)$ for $s\le -e^{1/\eps}$. 
With that and the above identifications the pregluing \eqref{eq:preg} defines a section $\oplus_R(\xi_+,\xi_-)\in\ti\g_R^*\rT M$. 
\end{remark}

Now our first task is to construct an ambient space $\cB\subset\cU$ that contains a given $\eps$-neighbourhood $\cU$ of $([\g_1],[\g_2])$ and has some notion of smooth structure in which we can do Fredholm analysis. 
Here the core idea of polyfold theory is that pregluing ought to provide a chart map\footnote{
The smooth structure on this chart near the boundary $r=0$ depends on the choice of a homeomorphism $[0,\eps) \simeq (R_0,\infty]$. Our simplified choice $r\mapsto R(r):= e^{1/r}$ is equivalent to the ``exponential gluing profile'' $r\mapsto e^{1/r}-e$ used in \cite{hwz:gw}. 
} 
$$
(r,\xi_1,\xi_2) \;\mapsto\; 
\begin{cases}
\bigl[\oplus_R(\g_1+\xi_1,\g_2+\xi_2)\bigr] &; R=e^{1/r}<\infty, \\
\bigl( [\g_1+\xi_1],[\g_2+\xi_2]\bigr)  &; R=e^{1/r}=\infty,
\end{cases}
$$
which is a homeomorphism between an open subset of a smooth space
$$
\{ r,\|\xi_1\|,\|\xi_2\|<\eps  \} \subset [0,1) \times \cC^1(\R\times S^1,\C^n) \times \cC^1(\R\times S^1,\C^n)
$$
and a topological space $\cB$ that contains $\cU$ (with the induced relative topology identical to the topology given by pregluing).
Unfortunately, while this tentative chart map is continuous, open, and surjective by construction, it fails to be injective due to two separate effects.  
First, already $\xi_i\mapsto [\g_i+\xi_i]$ evidently maps orbits of the reparametrization $\R$-action to points. This can be remedied by taking a local slice in $\cC^1(\R\times S^1,\C^n)$ to the $\R$-action as in Remark~\ref{rmk:gen2}, which also rules out different shifts of $\xi_1,\xi_2$ and gluing lengths $R<\infty$ resulting in the same unbroken trajectory $[\oplus_R(\g_1+\xi_1,\g_2 + \xi_2)]$. 
However, the second effect is that the pregluing $\oplus_R(\xi_1,\xi_2)$ is independent of the values of $\xi_1|_{[R+1,\infty)\times S^1}$ and $\xi_2|_{(\mi \infty,-R-1]\times S^1}$.

To remedy this, polyfold theory uses open subsets of not just sc-Banach spaces but also sc-retracts (images of scale smooth retractions) as domains of chart maps. Trajectory breaking moreover introduces boundary, so that the smooth structure will arise from the product $[0, \eps ) \times \B$ of a closed interval (with natural sc-structure induced from $\R$) and an sc-Banach space $\B\subset \cC^1(\R\times S^1,\C^n) \times \cC^1(\R\times S^1,\C^n)$ which we construct in \S\ref{ss:sc}.  
Then the relevant scale smooth retraction\footnote{
The gluing profile $R(r)=e^{1/r}$ will be implicit in our notation throughout. This choice becomes crucial here since it guarantees scale smoothness of $\rho$.
}
\begin{align*}
\rho: \;  [0, \eps ) \times \B  &\;\to\;  [0, \eps ) \times \B \\
 (r,\xi_1,\xi_2)& \;\mapsto\; \bigl( r \, , \, \pi_r(\xi_1,\xi_2):= (\oplus_R\times\ominus_R)^{-1}( \oplus_R(\xi_1,\xi_2) , 0 ) \bigr) 
\end{align*}
arises from the family of projections $\pi_r:= (\oplus_R\times\ominus_R)^{-1}( \oplus_R(\cdot,\cdot) , 0 ) : \B\to\B$ along $\ker\oplus_R$ to the kernel of the {\bf anti-pregluing}
\begin{equation}
\label{eq:apreg}
\ominus_{ R}(\xi_1,\xi_2) \,:=\; (\b-1) \cdot   \tau_R\,\xi_1 +\b \cdot  \tau_{ - R} \,\xi_2 .
\end{equation}
This uses the (anti-)pregluing notation $\oplus_\infty(\xi_1,\xi_2):=(\xi_1,\xi_2)$ and $\ominus_\infty(\xi_1,\xi_2):=0$, so that $\oplus_R\times\ominus_R$ is an isomorphism for all $R\in [e^{1/\eps},\infty]$.
The resulting projections $\pi_r: \B\to\B$ are pointwise continuous w.r.t.\ variations of $r\in[0,\eps)$ but not continuous in any operator topology at $r=0$, thus allowing for jumping images from $\im\pi_0=\B$ to $\im\pi_r\subsetneq \B$ having infinite codimension. 
This provides a {\bf polyfold chart}\footnote{
For a rough idea of the compatibility notion between charts and a more detailed discussion of the retraction arising from pregluing see \cite[\S 2.1,\S 2.3]{theguide}.
} for an ambient space $\Ti\cB$ of the Floer moduli space $\bM$, whose image intersects $\bM$ in the $\eps$-neighbourhood $\cU$, 
\begin{align} 
\cB:=\{ (r,\xi_1,\xi_2)\in\im\rho \,|\, \|\xi_1\|,\|\xi_2\|<\eps  \} &\;\to\; \qquad \Ti\cB, \label{eq:chart} \\
(r,\xi_1,\xi_2)\qquad\qquad\quad &\;\mapsto\; [\oplus_R(\g_1+\xi_1,\g_2+\xi_2)] . \nonumber
\end{align} 
Since our discussion is localized to $\cU\subset\bM$, we can use $\cB:=\{ \|\xi_1\|,\|\xi_2\|<\eps  \}\subset\im\rho$ as ambient polyfold and move on to construct a section $s:\cB\to\cE$ whose zero set $s^{-1}(0)$ is identified with $\cU$ by the above chart map.

\begin{remark}\rm  \label{rmk:gen2}
In the general setting of Remark~\ref{rmk:gen1}, we need to replace addition in $\C^n$ by the exponential map in $M$, resulting in particular in the replacement of
$\oplus_R(\g_++\xi_+,\g_-+\xi_-) = \oplus_R(\g_+,\g_-) + \oplus_R(\xi_+,\xi_-)$ 
with $\exp_{\ti\g_R}\bigl( \oplus_R(\xi_+,\xi_-)\bigr)$.
Apart from that, the retraction is constructed as above since addition is well defined for the sections $\xi_\pm$ of $\g_\pm^*\rT M$ which will make up the sc-space $\B$ in this setting.  

Since the moduli space consists of Floer trajectories modulo $\R$-shifts, we also have to fix local slices. 
This can be done by finding codimension $1$ hypersurfaces $\Si_\pm \subset M$ that intersect $\g_\pm$ transversely at $(0,0)\in \R\times S^1$, and taking $\B$ to consist of sections $(\xi_+,\xi_-)$ that satisfy the slicing condition $\xi_\pm(0,0)\in \rT_{\g_\pm(0,0)}\Si_\pm$. 

The setup for Morse theory is obtained by dropping the variable $t\in S^1$ in the above constructions. Equivalently, we may take the $S^1$-invariant ``sub-polyfold''.
\end{remark}

\subsection{Floer's equation as polyfold bundle section, and its filling} \label{ss:filling}

The $\eps$-neighbourhood $\cU\subset\bM$ of the broken trajectory $\mathbf([\g_1],[\g_2])$ in the Floer moduli space is given by those equivalence classes of (pairs of) maps in the image of the chart map \eqref{eq:chart}, which satisfy Floer's equation \eqref{eq:Floer}.
Pulled back via the chart map, the solutions are $(r,\xi_1,\xi_2)\in\cB$ that satisfy 
$\pbar\oplus_R \!(\g_1+\xi_1,\g_2+\xi_2)=0$ resp. $\pbar (\g_i+\xi_i) = 0$ for $i=1,2$ in case $r=0$.  
To express this solution space as the zero set of a reasonable type of section
$$
s :\cB \to \bigcup_{(r, \xi_1,\xi_2)\in\cB}\{(r,\xi_1,\xi_2)\}\times \cE_{r} =: \cE, \quad (r,\xi_1,\xi_2) \mapsto \bigl( (r,\xi_1,\xi_2) , \phi(r,\xi_1,\xi_2) \bigr) 
$$
we must relate the fibers $\cE_r$ for different $r\ge 0$ (whch are independent of $(\xi_1,\xi_2)$). Naively, the fibers are $\cE_0\simeq \cC^0(\R\times S^1,\C^n) \times \cC^0(\R\times S^1,\C^n)$ for $r=0$ but $\cE_r\simeq \cC^0(\R\times S^1,\C^n)$ for $r>0$ since the first requires two PDEs on $\R\times S^1$ to be solved. 
This variation in fibers fortunately coincides with changes in the tangent spaces to the base polyfold, 
$\rT_{(r,\xi_1,\xi_2)}\cB \simeq \R \times \ker\ominus_R$, which are
$\rT_{(0,\xi_1,\xi_2)}\cB= \R\times \cC^1(\R\times S^1,\C^n)\times \cC^1(\R\times S^1,\C^n)$ for ${r=0}$ and for $r>0$ via $\id_\R\times\oplus_R$ are isomorphic to $\rT_{(r,\xi_1,\xi_2)}\cB \simeq \R\times \cC^1(\R\times S^1,\C^n)$.  
So it makes sense to build a {\bf polyfold bundle} $\cE\to\cB$ with fibers $\cE_{r} = \ker  \widehat\ominus_R \subset \E$ given by an anti-pregluing map $\widehat\ominus_R$ on another sc-Banach space $\E\subset\cC^\infty(\R\times S^1,\C^n) \times \cC^\infty(\R\times S^1,\C^n)$.\footnote{
The choices of $\B$ and $\E$ in \S\ref{ss:sc} will have to be compatible to make sure that the Floer operator $\pbar$ takes values in the fibers of $\cE$. So if $\B$ consists of $W^{k,p}$ Sobolev spaces, then $\E$ will use $W^{k-1,p}$, reflecting the fact that $\pbar$ is a first order differential operator. 
}
We can define pregluing $\widehat\oplus_R$ and anti-pregluing $\widehat\ominus_R$ on $\E$ by the same expressions as in \eqref{eq:preg}, \eqref{eq:apreg} to obtain an isomorphism $\widehat\oplus_R \times \widehat\ominus_R$ for each $r\in (0, \eps)$. 
Then we can rigorously define the {\bf polyfold bundle section} $s:\cB\to\cE$ above by the fiber part 
$$
\phi (r,\xi_1,\xi_2) :=\begin{cases}
(\widehat\oplus_R \times \widehat\ominus_R)^{-1} \bigl( \, \pbar \oplus_R \!(\g_1+\xi_1,\g_2+\xi_2)\, , 0 \, \bigr) &; r>0 \\
\bigl( \pbar (\g_1+\xi_1) , \pbar (\g_2+\xi_2) \bigr) &; r=0 .
\end{cases}
$$
Now the $\eps$-neighbourhood $\cU\subset\bM$ in the moduli space is identified, via the chart map \eqref{eq:chart}, with the zero set $s^{-1}(0):=\phi^{-1}(0)$ of the section $s$, given by the map
$$\textstyle
\phi: \; \bigcup_{r\in[0,\eps)} \{r\} \times \im\pi_r \supset \cB \;\to\;   \bigsqcup_{r\in[0,\eps)} \im\Hat\pi_r .
$$
Here $\Hat\pi_r:= (\Hat\oplus_R\times\Hat\ominus_R)^{-1}( \Hat\oplus_R(\cdot,\cdot) , 0 )$ is the family of projections arising from pregluing on $\E$. 
Now for appropriate sc-Banach spaces $\B,\E$ one can use the chain rule in scale calculus, together with the scale smoothness of reparametrization to check that $s$ is a {\bf scale smooth section}, in the sense that the induced map with open domain, $\phi\circ\rho:  [0,\eps)\times \B \supset \rho^{-1}(\cB) \to \E$ is sc$^\infty$.

Note however that $\phi\circ\rho$ cannot be Fredholm in any sense since $\rd\rho$ has infinite dimensional kernel on $(0,\eps)\times \B$, given by the kernel of pregluing. 
More abstractly, neither the base $\cB$ nor the bundle $\cE$ are locally homeomorphic to Banach spaces, but their tangent spaces $\R\times\im\pi_r$ resp.\ fibers $\im\Hat\pi_r$ are families of linear subspaces of the scale Banach spaces $\R\times\B$ resp.\ $\E$, which are parametrized by the same gluing parameter $r\in [0, \eps )$ such that ``base and fiber dimensions jump in the same way''. 
This is formalized by a {\bf filled section} $\Phi:\rho^{-1}(\cB)\to \E$ which has the same zero set $\Phi^{-1}(0)=\phi^{-1}(0)$ and restricts to isomorphisms $\{0\}\times\ker\pi_r \overset{\sim}\to \ker\Hat\pi_r$ between the complements of the fibers of $\rT\cB$ and $\cE$. 
Such a filling can often be achieved by acting on the anti-preglued map with a linearized operator -- in our setting the linearized Floer operator $\rD_{\overline c_0} \pbar$ at the constant trajectory $\overline c_0(s,t):=c_0(t)=0$. 
It gives rise to the scale smooth map $\Phi : [0, \eps) \times \B\supset \rho^{-1}(\cB) \to \E$ given by
replacing the $0$ in the definition of $\phi$ with $(\rD_{\overline c_0} \pbar) \ominus_R \!(\xi_1,\xi_2)$, i.e.\
\begin{equation}\label{Phi}
\Phi(r,\xi_1,\xi_2) :=
\begin{cases}
(\widehat\oplus_R \times \widehat\ominus_R)^{-1}
\left( \begin{aligned} 
\pbar \oplus_R \!(\g_1+\xi_1,\g_2+\xi_2) \\
(\rD_{\overline c_0} \pbar) \ominus_R \!(\xi_1,\xi_2) \quad
\end{aligned} \right) &;r>0, \\
\quad \left( \begin{aligned} 
 \pbar (\g_1+\xi_1) \\
  \pbar (\g_2+\xi_2) 
\end{aligned} \right) 
&; r=0 .
\end{cases}
\end{equation}

\begin{remark}\rm \label{rmk:gen3}
In the general setting of Remark~\ref{rmk:gen1}, these constructions are applied to spaces $\B,\E$ of sections $\xi_\pm$ of $\g_\pm^*\rT M$, after generalizing the formulas for $\phi,\Phi$ by
$\g_\pm+\xi_\pm = \exp_{\g_\pm}(\xi_\pm)$ and
$\oplus_R (\g_++\xi_+,\g_-+\xi_-) = \exp_{\ti\g_R}(\oplus_R (\xi_+,\xi_-))$. 

The Morse theory setting is again obtained by dropping the variable $t\in S^1$, and replacing the Floer operator $\pbar$ by $\partial_s + \nabla H$. This is equivalent to restricting the section $s:\cB\to\cE$ to the $S^1$-invariant subsets $\cB^{S^1}\subset \cB$ and $\cE^{S^1}\subset \cE|_{\cB^{S^1}}$.
 \end{remark}

\noindent
{\bf Summary:}
Given an $\eps$-neighbourhood $\cU\subset\bM$ of a broken trajectory $([\g_1],[\g_2])$, and up to the choice of $\B,\E$ in \S\ref{ss:sc}, the above constructs three types of maps whose zero sets are identified and homeomorphic to the given neighbourhood $s^{-1}(0)=\phi^{-1}(0)=\Phi^{-1}(0)\simeq\cU$ in the moduli space via \eqref{eq:chart}. 
Here $s:\cB\to\cE$ is a section of a polyfold bundle and $\phi:\im\rho\supset \cB\to\E$ is a scale smooth map from an open subset of an sc-retract to an sc-Banach space -- notions that are at best sketched here; see \cite{theguide} for more rigorous introductions.
However, $\Phi: [0,\infty)\times\B \supset \rho^{-1}(\cB)\to \E$ is a scale smooth map in the sense of Definition~\ref{def:sc}, if we allow for the generalization to maps whose domain are open subsets of sc-Banach spaces or -- the boundary case -- a product of a quadrant $[0,\infty)^k$ with an sc-Banach 
space.

\medskip 

Now the main result of this part is the Fredholm property of all these maps. For $s$ and $\phi$ this notion (from \cite[Def.3.6]{hwz2}, also see \cite[Def.6.2.8]{theguide}) requires for every ``smooth point''  $b\in\cB_\infty:= \im\rho\cap [0,\infty)\times B_\infty$ the existence of a filled section in local coordinates that is sc-Fredholm in the sense of Definition~\ref{def:HWZfred}. 
For the point $b=(0,0,0)\in\cB_\infty$, which corresponds to the broken trajectory $([\g_1],[\g_2])\in\bM$ in the chart \eqref{eq:chart}, such a filled section is given by $\Phi$, whose sc-Fredholm property at $b=0$ will be proven in Theorem~\ref{mainthm} below. 
At an unbroken map $b=[\g]\in\Ti\cB_\infty$ this germ Fredholm property follows -- in a simpler polyfold chart which requires no filling -- directly from well known properties of Cauchy-Riemann as outlined in \S\ref{ss:fluff}. 
At another once-broken map $b=([\g'_1],[\g'_2])\in\Ti\cB_\infty$, one would do the above setup with $\g_i$ replaced by the maps $\g'_i$. In general, these $\g'_i$ may not solve Floer's equation, but are smooth maps with exponential decay to the given limit orbits so that the proof of Theorem~\ref{mainthm}, in particular the estimates in \S\ref{ss:proof}, continue to hold.

More generally, as explained in \S\ref{ss:fluff}, the ambient polyfold $\Ti\cB_\infty$ of Floer's compactified moduli space $\bM$ will also contain multiply broken maps with bubble trees of spheres. 
To prove the Fredholm property at those points, the above setup and following Fredholm analysis will have to be ``patched together'' with the setup and analysis in \cite{hwz:gw} near nodes, as discussed in in \S\ref{ss:fluff}.

\begin{theorem}\label{mainthm}
The section $s:\cB\to\cE$ constructed above is polyfold Fredholm at $0$ in the sense that the filled section $\Phi$ is sc-Fredholm at $0$.
In fact, $\Phi$ is sc-Fredholm at $0$ with respect to the splitting $[0, \eps) \times \B$ of its domain, as in Definition~\ref{def:scfred}. 
\end{theorem}

\begin{proof}
The first part follows from the second part by definition and Theorem~\ref{thm}.
To see that $\Phi$ satisfies Definition~\ref{def:scfred}, we begin by noting that the regularization property (i) follows from standard elliptic regularity for the nonlinear and linearized Cauchy-Riemann operators together with the exponential decay property \eqref{eq:exp}.
\footnote{
For the regularization property it is crucial that the weight sequence $\ul\d$ is chosen between $0$ and the exponential decay constant $\d>0$ in \eqref{eq:exp}.
}
Next, $\Phi$ is composed of classically smooth maps (addition, multiplication with smooth scalar functions, and Cauchy-Riemann operators $\pbar, \rD_{\overline c_0} \pbar$ between appropriate Sobolev spaces) and reparametrizations $(r,\eta)\mapsto {\eta(\pm R+\cdot)}$ for the exponential gluing profile $R(r)=e^{1/r}$. The latter are scale smooth by combining exponential decay estimates with \cite[Thm.2.6]{hwz:retract}. Thus $\Phi$ is overall scale smooth by the chain rule in scale calculus \cite[Thm.2.16]{hwz1}. Moreover, $\Phi(r,\cdot)$ is classically smooth for fixed $r\in[0,\eps)$ by the classical chain rule since reparametrization $\tau_{\pmi R}$ with a fixed parameter $\pm R$ is a linear (hence smooth) operator.
So $\Phi$ satisfies the beginning of condition (ii) in Definition~\ref{def:scfred}, and it remains to establish (ii-a), (ii-b), (iii) in \S\ref{ss:proof}.
\end{proof}

\begin{remark}\rm \label{rmk:gen3.5}
Theorem~\ref{mainthm} extends to the general setting of Remark~\ref{rmk:gen3}. The necessary adjustments of arguments are discussed in Remark~\ref{rmk:gen3.9}. 

Similarly, Theorem~\ref{mainthm} holds in the Morse theory setting by the same proof.
Abstractly, the Morse section is obtained by restriction of the $S^1$-equivariant Floer section $s:\cB\to\cE$ to the fixed point set,  $s|_{\cB^{S^1}}:\cB^{S^1}\to (\cE|_{\cB^{S^1}})^{S^1}$.
Note here that both the ``sub-polyfold'' $\cB^{S^1}\subset \cB$ and the $S^1$-invariant subspace of each fiber $\cE_r^{S^1}\subset \cE_r$ have infinite codimension. While scale smoothness and regularization properties are preserved by this restriction, the specific Fredholm properties have to be deduced separately. Instead of the classical Fredholm properties of the Cauchy-Riemann operator, this is based on the Fredholm properties of the gradient flow operator $\xi\mapsto \partial_s(\exp_\g(\xi)) + \nabla H(\exp_{\g}(\xi))$. 
 \end{remark}

\subsection{Scale calculus for cylindrical ends} \label{ss:sc}

Recall from e.g.\ \cite{Salamon} that finite energy of the Floer trajectories $\g_i$ in \S\ref{ss:preglue} implies exponential decay $\lim_{s\to\pm \infty}\g_i(s, \cdot) = \g_{i,\pmi}$ to Hamiltonian orbits $\g_{i,\pmi}: S^1 \to \C^n$ with $\partial_t \g_{i,\pmi} (s,t) = X(\g_{i,\pmi})$, and we are considering the special case $\g_{i,\pmi}\equiv 0$.
Exponential decay for a smooth map $\g:\R\times S^1 \to \C^n$ in general means that for some constants $C$ and $\d>0$ we have
\begin{equation}\label{eq:exp}
 \bigl| \partial_s \g (s,t) \bigr| , \bigl| \partial_t \g (s,t) - X(\g_i(s,t)) \bigr| \le C e^{\mi\d |s|} \qquad\forall (s,t)\in\R\times S^1 
\end{equation}
and analogous estimates for all higher derivatives (which follow automatically if $\g$ satisfies the Floer equation \eqref{eq:Floer}).
Based on this decay constant $\delta>0$, we construct in Lemma~\ref{ex:sobolev} below (with $H^{m,\d}:=W^{m,2}_{\d}$) the ambient sc-Banach spaces
\begin{align}
\B &\,:=\; \bigl( B_m :=  H^{m+1,\delta_m}(\R\times S^1,\C^n) \times H^{m+1,\delta_m}(\R\times S^1,\C^n) \bigr)_{m\in\N}   \label{eq:BE}\\
\E &\,:=\; \bigl( E_m :=  H^{m,\delta_m}(\R\times S^1,\C^n) \times H^{m,\delta_m}(\R\times S^1,\C^n) \bigr)_{m\in\N}  \nonumber
\end{align}
For $\Phi:[0,\eps)\times \B\to\E$ given by \eqref{Phi} to be sc-Fredholm between these spaces, it is crucial to use the same weight sequence $0\le \d_1< \d_2 <\ldots<\d$ and a regularity shift by $1$ between $\B$ and $\E$. This ensures that $\Phi$  is scale smooth and regularizing, and in this setting \S\ref{ss:proof} proves the sc-Fredholm property as stated in Theorem~\ref{mainthm}.

\begin{remark}\rm \label{rmk:gen2.5}
Note that we dropped from $\B$ the slicing conditions of Remark~\ref{rmk:gen2}, since they do not affect the analysis in \S\ref{ss:proof}.
Remark~\ref{rmk:gen4} provides $\B,\E$ that yield a rigorous polyfold Fredholm setup in \cite{fh-sft}. This requires two restrictions: first to higher regularity $(B_m)_{m\ge 2}, (E_m)_{m\ge 2}$ and second to codimension $1$ subspaces.
So this setup considers Floer's equation as operator $H^3\to H^2$, whereas our setup works with ambient spaces $H^2\to H^1$. 
On the other hand, viewing Floer's equation as operator $H^1\to H^0$ corresponds to extended sc-Banach spaces 
\begin{equation} \label{eq:BEext}
\B^0 \,:=\; \bigr( H^{m+1,\d_m} \times H^{m+1,\d_m} \bigl)_{m\in\N_0} , \quad \E^0 \,:=\; (  H^{m,\d_m}\times H^{m,\d_m} )_{m\in\N_0} .
\end{equation}
For maximal generality we allow weight sequences $-\d< \d_0 <\d_1<\ldots<\d$. Then any choice of $\B,\E$ in \eqref{eq:BE} can be viewed as restriction of \eqref{eq:BEext} to $k\in\N$ for some choice of $-\d<\d_0<\d_1$. 
More precisely, we drop the restriction $\d_0\ge 0$ only when considering linearized operators. In the setup of the nonlinear map $\Phi$ in \eqref{Phi} the condition $\xi_i \in H^{1,\d_0}\subset H^1$ is needed to guarantee $\lim_{s\to\pm\infty} (\g_i+\xi_i)(s,\cdot)=0$. 

In \S\ref{ss:proof} we establish most sc-Fredholm properties of $\Phi:[0,\eps)\times \B^0\to\E^0$, but we cannot establish the full sc-Fredholm property due to nonlinearities in the proof of Lemma~\ref{le1}, analogous to the ``quadratic estimates'' in classical gluing analysis. 
\end{remark}

To fix notation and recall the scale calculus from \S\ref{s:calc}, we construct the weighted Sobolev spaces and organize them into sc-Banach spaces. 
Note that we used $p=2$ above since perturbation results in \cite{hwz2} ultimately require sc-Hilbert spaces.

\begin{lemma} \label{ex:sobolev}
The weighted Sobolev space with scale structure 
$$
\W^{\ell,p}_{\ud}(\R\times S^1,\C^n) \;=\; \bigl( W^{\ell+m,p}_{\delta_m}(\R\times S^1,\C^n) \bigr)_{m\in\N_0}
$$
is an sc-Banach space for any $n\in\N$, $\ell\in\N_0$, $1\leq p <\infty$, and weight sequence $\ud=(\delta_m)_{m\in\N_0}$ with $\delta_{m+1}>\delta_m\ge0$.
It is defined by the weighted Sobolev spaces
$$
W^{k,p}_{\delta}(\R\times S^1,\C^n) 
:= \bigl\{ u:\R\times S^1\to\C^n \st (s,t)\mapsto e^{\delta \eta(s)} u(s,t) \in W^{k,p} \bigr\}
$$
with norm $\| u \|_{W^{k,p}_{\delta}} := \| e^{\delta \eta} u \|_{W^{k,p}}$ for some choice\footnote{
Different choices of $\eta$ yield equivalent norms on the same space, which we use interchangeably.
Another equivalent definition is $u\in W^{k,p}_\delta$ iff $e^{\delta |s|} \nabla^\alpha u \in L^p$ for all multi-indices $|\alpha|\leq k$. The corresponding equivalent norm $\sum_{|\alpha|\leq k} \|e^{\delta |s|} \nabla^\alpha u\|_{L^p}$ is often used in proving estimates.
} 
of $\eta\in\cC^\infty(\R)$, with $\eta(s)=|s|$ for $|s|\ge 1$ and $0<\eta(s)<1$ for $|s|<1$.
\end{lemma}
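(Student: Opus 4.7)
The goal is to verify the three axioms of Definition~\ref{def:scBanachSpace}: completeness of each level, continuous and compact inclusion between levels, and density of the intersection. The unifying technical device is the isometric isomorphism
$$
M_\delta \,:\; W^{k,p}_\delta(\R\times S^1,\C^n) \;\longrightarrow\; W^{k,p}(\R\times S^1,\C^n), \qquad u \;\longmapsto\; e^{\delta\eta} u,
$$
which transports every weighted question into an unweighted one. In particular, completeness of each level $W^{\ell+m,p}_{\delta_m}$ follows immediately from completeness of the standard Sobolev space $W^{\ell+m,p}(\R\times S^1,\C^n)$ and the fact that $M_{\delta_m}$ is an isometry by definition of the weighted norm.

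For the continuous inclusion $W^{\ell+k,p}_{\delta_k}\hookrightarrow W^{\ell+j,p}_{\delta_j}$ for $k>j$, set $\mu:=\delta_k-\delta_j>0$ and factor the inclusion as
$M_{\delta_j}^{-1}\circ \iota \circ (M_{\delta_k - \delta_j}\circ M_{\delta_j}) \circ M_{\delta_k}^{-1}\circ M_{\delta_k}$, which after cancellation amounts to the composition of the trivial continuous inclusion $W^{\ell+k,p}\hookrightarrow W^{\ell+j,p}$ with multiplication by $e^{-\mu\eta}$. Since $\eta$ is smooth with $\eta(s)=|s|$ for $|s|\geq 1$, all derivatives of $\eta$, and hence of $e^{-\mu\eta}$, are uniformly bounded on $\R\times S^1$; multiplication by $e^{-\mu\eta}$ is therefore bounded on each $W^{m,p}$, giving the claimed continuous inclusion.

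The main obstacle is compactness, since $\R\times S^1$ is noncompact and classical Rellich--Kondrachov gives compactness only on bounded subdomains. The plan is a standard exhaustion-plus-exponential-decay argument. Given a bounded sequence $(u_n)\subset W^{\ell+k,p}_{\delta_k}$, set $v_n:=e^{\delta_k\eta}u_n$, which is bounded in $W^{\ell+k,p}(\R\times S^1)$. For each $N\in\N$, Rellich--Kondrachov applied to the bounded cylinder $[-N,N]\times S^1$ combined with a diagonal extraction produces a subsequence and $v\in W^{\ell+j,p}_{\loc}$ with $v_n\to v$ in $W^{\ell+j,p}([-N,N]\times S^1)$ for every $N$. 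Writing $u_n-u=e^{-\delta_k\eta}(v_n-v)$ and measuring in the $\delta_j$-weighted norm, the key point is that on $\{|s|>N\}\times S^1$ the multiplication operator $e^{-\mu\eta}$ together with all its derivatives is pointwise bounded by $Ce^{-\mu N}$, hence
$$
\|e^{-\mu\eta}(v_n-v)\|_{W^{\ell+j,p}(\{|s|>N\}\times S^1)} \;\leq\; C e^{-\mu N}\bigl(\|v_n\|_{W^{\ell+j,p}}+\|v\|_{W^{\ell+j,p}}\bigr)
$$
is arbitrarily small uniformly in $n$ for $N$ large, while on the bounded region $[-N,N]\times S^1$ the contribution vanishes as $n\to\infty$ by the local convergence. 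An $\epsilon/2$ argument then yields $u_n\to u$ in $W^{\ell+j,p}_{\delta_j}$, which is the required compactness.

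Finally, for density of $W_\infty:=\bigcap_{m}W^{\ell+m,p}_{\delta_m}$ in each level, I would observe that $\cC^\infty_c(\R\times S^1,\C^n)\subset W_\infty$, since a compactly supported smooth function lies in every $W^{\ell+m,p}_{\delta_m}$, and then approximate in the weighted norm by a compactly supported smooth function: given $u\in W^{\ell+m,p}_{\delta_m}$, choose $v_n\in\cC^\infty_c$ with $v_n\to e^{\delta_m\eta}u$ in the standard $W^{\ell+m,p}$ norm, and set $u_n:=e^{-\delta_m\eta}v_n\in\cC^\infty_c$; then $\|u_n-u\|_{W^{\ell+m,p}_{\delta_m}}=\|v_n-e^{\delta_m\eta}u\|_{W^{\ell+m,p}}\to 0$. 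This verifies axiom (iii) and completes the proof.
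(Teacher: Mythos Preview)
Your proof is correct and follows essentially the same strategy as the paper: Rellich--Kondrachov on bounded cylinders $[-R,R]\times S^1$ combined with the exponential decay $e^{-(\delta_k-\delta_j)R}$ of the weight ratio on the tails, and density via $\cC^\infty_c\subset E_\infty$. The only difference is presentational---the paper phrases compactness by observing that the tail restriction operator tends to zero in operator norm (so the full inclusion is an operator-norm limit of compact operators), whereas you carry out the equivalent sequential diagonal-extraction argument explicitly.
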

\begin{proof}
The inclusion $E_k=W^{k+\ell,p}_{\delta_k}(\R\times S^1,\C^n) \subset W^{m+\ell,p}_{\delta_m}(\R\times S^1,\C^n)=E_m$ for $k>m$ exists since 
$e^{\delta_k \eta} \geq e^{\delta_m \eta}$. It is compact since the restriction
$W^{k+\ell,p}_{\delta_k}(\R\times S^1,\C^n) \to W^{m+\ell,p}_{\delta_k}([\mi R,R]\times S^1,\C^n)$ is a compact Sobolev embedding for any finite $R\geq 1$ (due to the loss of derivatives $k>m$, see \cite{Adams}) and the restriction
$W^{k+\ell,p}_{\delta_k}(\R\times S^1,\C^n) \to W^{k+\ell,p}_{\delta_m}((\R\setminus[\mi R,R])\times S^1,\C^n)$ converges to $0$ in the operator norm as $R\to\infty$ (due to the exponential weights $\delta_k>\delta_m$ combining to $\sup_{|s|\geq R} e^{\delta_m \eta(s)} e^{\mi \delta_k \eta(s)} = e^{\mi(\delta_k-\delta_m)R}$).

The {\em smooth points} $u\in E_\infty:= \bigcap_{m\in\N_0}W^{\ell+m,p}_{\delta_m}(\R\times S^1,\C^n)$ are those smooth maps $u\in\cC^\infty(\R\times S^1,\C^n)$ whose derivatives decay exponentially in the sense that 
$\sup_{s,t\in\R\times S^1} e^{\delta' \eta(s)} | \partial_s^{N_1}\partial_t^{N_2} u (s,t) | < \infty$ for all $N_1,N_2\in\N_0$ and any submaximal weight $\delta' < \sup_{m\in\N_0} \delta_m$.
In particular, the compactly supported smooth functions are a subset 
$\cC^\infty_0(\R\times S^1,\C^n)\subset E_\infty$; and even these are dense 
in any weighted Sobolev space (for $p<\infty$).
\end{proof}

\begin{remark}\rm \label{rmk:gen4}
For the slicing conditions of Remark~\ref{rmk:gen2} to define a local slice to the reparametrization in $\R$, the space $\B$ has to embed into continuously differentiable functions. 
This can be achieved by starting the scales at a weighted $H^3$ space. 
Moreover, the general setting of Remark~\ref{rmk:gen1} requires sections of pullback bundles. So,  abbreviating $H^{k,\d} :=  H^{k,\delta}(\R\times S^1,\g_+^*\rT M) \times H^{k,\delta}(\R\times S^1,\g_-^*\rT M)$, the general sc-Banach spaces $\B=(B_m)_{m\in\N_0}$ and $\E=(E_m)_{m\in\N_0}$ are given by
\begin{align*}
B_m &\,:=\; \{ (\xi_+,\xi_-)\in H^{m+3,\d_m} \st  \xi_\pm(0,0)\in \rT_{\g_\pm(0,0)}\Si_\pm\}, \quad
E_m \,:=\; H^{m+2,\delta_m}
 \end{align*}
for $0\le\d_0<\d_1<\ldots<\d$ and hypersurfaces $\Si_\pm\subset M$ as in Remark~\ref{rmk:gen2}. 
Here $\d>0$ is the maximal constant for which both the exponential decay estimates \eqref{eq:exp} for Floer trajectories and invertibility of the linearized operators in Lemma~\ref{lem:invert} hold. Explicitly, $\d>0$ is the smallest absolute value of eigenvalues of the limit operators $J(c)\partial_t - \rD_{c}(JX)$ on $L^2(S^1,c^*\rT M)$ at Hamiltonian orbits ${c:S^1\to M}$. 

In the Morse setting of Remark~\ref{rmk:gen3.5}, this limit operator of the Hessian of the Morse function $H$ at critical points $c\in {\rm Crit}(H)$. 
The $S^1$-invariant subspaces $\B^{S^1}\subset\B$, $\E^{S^1}\subset\E$ also form sc-Banach spaces by restriction. Alternatively, one can directly check the embedding properties for the sc-Banach spaces given by
\begin{align*}
B^{S^1}_m &:= \bigl\{ \bigl(\xi_\pm \in H^{m+2,\delta_m}(\R,\g_\pm^*\rT M)\bigr)\st  \xi_\pm(0)\in \rT_{\g_\pm(0)}\Si_\pm \bigr\}, \\
E^{S^1}_m &:=  H^{m+1,\delta_m}(\R,\g_+^*\rT M) \times H^{m+1,\delta_m}(\R,\g_-^*\rT M)  .
\end{align*}
The only difference in the use of Sobolev embeddings on $1$-dimensional domains is that $H^{k}(\R)\subset\cC^1(\R)$ holds for $k\ge 2$, so we can extend the scales to view the gradient flow as operator $H^2\to H^1$. However, the perturbation scheme for Morse theory that arises from the above setup starting at $H^3\to H^2$ would be the same.
\end{remark}

This completes the construction of the polyfold bundle section $s:\cB\to\cE$ and its filled section $\Phi: [0,\infty)\times\B \supset \rho^{-1}(\cB)\to \E$ in \S\ref{ss:filling}. 
Now the precise claim of Theorem~\ref{mainthm} is that $\Phi$ is sc-Fredholm at $0$ with respect to the splitting $\R^d\times\E'=\R\times\B$ (and with target $\F=\E$, not to be confused with the domain space that was denoted $\E$ in \S\ref{s:nonlin}).
That is, $[0, \eps )$ plays the role of the finite dimensional parameter space $\R^d$ and conditions (ii), (iii) require an understanding of the partial differential $\rD_\B\Phi(r,\ul e)$ at $r\in [0, \eps )$ and $\ul e = (e_1,e_2)\in\E'=\B$.
To calculate it we abbreviate 
$$
\g_r := \oplus_R(\g_1, \g_2),\;\;
\g^e_r := \oplus_R(\g_1 + e_1, \g_2 + e_2), \;\;
\g_r^\pmi = \tau_{\pmi R}\g_r ,\;\;
\g_r^{e\,\pmi} = \tau_{\pmi R}\g^e_r 
$$
for $r>0$ and for $r=0$ set $\g^\mi_0 := \g_1$, $\g^\pl_0 := \g_2$, 
$\g^{e\,\mi}_0 := \g_1+e_1$, $\g^{e\,\pl}_0 := \g_2+e_2$, 
which coincides with the pointwise $r\to 0$ resp.\ $R\to\infty$ limits of $\g_r^{e\,\pmi}=\tau_{\pmi R} \oplus_R(\g_1+e_1, \g_2+e_2)$. 
With that notation, $\rD_\B\Phi(r,\ul e) (\xi_1,\xi_2)$  for $r>0$ is given by
\begin{align}  
& \rD_\B\Phi(r,\ul e) (\xi_1,\xi_2) \nonumber \\
&\;=\;  \left.\frac\rd{\rd h}\right|_{h=0} 
(\widehat\oplus_R \times \widehat\ominus_R)^{-1} 
\left( \begin{aligned}
\pbar \oplus_R \!(\g_1+e_1+h\xi_1,\g_2+e_2+h\xi_2)   \\
(\rD_{\overline c_0} \pbar) \ominus_R \!(e_1+h\xi_1,e_2+h\xi_2) 
\end{aligned}
\right) \label{defDPhi}  \\
&\;=\;
(\widehat\oplus_R \times \widehat\ominus_R)^{-1} 
\left( \begin{aligned}
\bigl( \partial_s  + J(\g^e_r) \, \partial_t + F_{\g^e_r} \bigr) \oplus_R \! (\xi_1,\xi_2)  
\\
 \bigl( \partial_s  + J(\overline c_0) \partial_t  + F_{\overline c_0}   \bigr) \ominus_R \! (\xi_1,\xi_2)  
\end{aligned}
\right) \nonumber.
\end{align}
Here we encounter linearized Floer operators (w.r.t.\ implicit connections) at general base points $\g\in\cC^\infty(\R\times S^1,\C^n)$ with limits $\lim_{s\to\pm\infty}\g(s,\cdot)=c_0$ as in \eqref{eq:exp}. They are well defined for any $m\in\N_0$ and weight $w\in\R$ 
by
$$
\rD_\g \pbar = \partial_s  + J(\g) \, \partial_t + F_{\g} :  
H^{m+1,w}(\R\times S^1,\C^n) \;\to\; H^{m,w}(\R\times S^1,\C^n), 
$$
with the pointwise linear operators $F_\g: \C^n\to \C^n$ given by\footnote{
The Morse case replaces $\rD_\g \pbar$ by  $\partial_s  + F_{\g}$ with the Hessian $F_\g(\zeta)= \rD_\g \nabla H (\zeta)$.
}
\begin{equation}\label{eq:F}
F_\g \,:\; \zeta \;\mapsto\; (\rD_\g J)(\zeta) \partial_t \g - \rD_\g (JX) \zeta .
\end{equation}
For $r > 0$ we then use the linear algebra formula
\begin{equation}\label{eq:totalglue}
(\widehat\oplus_R \times \widehat\ominus_R)^{-1} (\zeta_1,\zeta_2) 
\;=\;
\left( \begin{aligned} 
\tau_{\mi R}  \bigl( \tfrac{\b}{\b^2 + (1-\b)^2} \zeta_1 + \tfrac{\b-1}{\b^2 + (1-\b)^2} \zeta_2 \bigr)  \\
\tau_{R}  \bigl(\tfrac{1-\b}{\b^2 + (1-\b)^2} \zeta_1 + \tfrac{\b}{\b^2 + (1-\b)^2}  \zeta_2 \bigr) 
\end{aligned}
\,\right) 
\end{equation}
to obtain
\begin{align} \label{DPhi}  
\rD_\B\Phi(r,\ul e) (\xi_1,\xi_2)
\;=\;
\left( \begin{aligned}
(\rD_{\g^\mi_r} \pbar ) \, \xi_1 \;+\; E_1(r,\ul{e},\xi_1,\xi_2) \\
(\rD_{\g^\pl_r} \pbar ) \, \xi_2 \;+\; E_2(r,\ul{e},\xi_1,\xi_2)
\end{aligned}
\, 
\right) .
\end{align}
Here the error terms $E_i$ for $r>0$ in terms of cutoff functions $B^{\cdots}$ (whose superscripts indicate their supports) are\footnote{
In the Morse setting, the error term $E_1(r,\ul{e},\xi_1,\xi_2)$ simplifies slightly to
\begin{align*}
&\quad B^{\scriptscriptstyle (\mi \infty,R+1]}  \bigl( \rD_{\g^{e\,\mi }_r}\nabla H - \rD_{\g^{\mi }_r}\nabla H \bigr) \,\xi_1 
+
B^{\scriptscriptstyle [R-1,\infty)}  \bigl( \rD_{\overline c_0}\nabla H - \rD_{\g^{\mi}_r}\nabla H \bigr) \,\xi_1 \\
&
+ B^{\scriptscriptstyle [R-1,R+1]}  \bigl( \rD_{\g^{e\,\mi}_r}\nabla H - \rD_{\overline c_0}\nabla H \bigr) \tau_{\mi 2R}\,\xi_2
+ B_s^{\scriptscriptstyle [R-1,R+1]} \bigl(\tau_{\mi R}(2\b - 1) \cdot  \xi_1  - \tau_{\mi 2R}\, \xi_2 \bigr).
\end{align*}
}
\begin{align*}  
& E_1(r,\ul{e},\xi_1,\xi_2)  \\
&\;\;= \;\; B^{(\mi\infty,R+1]}  \bigl( \bigl(J(\g^{e\,\mi}_r)-J(\g^\mi_r) \bigr) \, \partial_t  \xi_1 
\;+\;  \bigl( F_{\g^{e\,\mi}_r} - F_{\g^{\mi}_r} \bigr) \,\xi_1  \bigr) \\
&\qquad
\;+\;
B^{[R-1,\infty)} \bigl( \bigl(J(\overline c_0)-J(\g^{\mi}_r) \bigr) \, \partial_t  \xi_1 
\;+\;  \bigl( F_{\overline c_0} - F_{\g^{\mi}_r} \bigr) \,\xi_1 \bigr) \\
&\qquad
\;+\;  B^{[R-1,R+1]} \bigl( \bigl(J(\g^{e\,\mi}_r)-J(\overline c_0) \bigr)\, \tau_{\mi 2R}\,\partial_t \xi_2 
\;+\;\bigl( F_{\g^{e\,\mi}_r} - F_{\overline c_0} \bigr)\,  \tau_{\mi 2R}\,\xi_2 \bigr) \\
&\qquad
\;+\;
 B_s^{[R-1,R+1]} \bigl( \tau_{\mi R}(2\b - 1) \cdot \xi_1  
\;-\;  \tau_{\mi 2R}\, \xi_2 \bigr) ,
\end{align*}
and analogously for $E_2(r,\ul{e},\xi_1,\xi_2)$ by ``swapping cylinders'' (this involves $\g^{e\,\pl}_r$, $\g^\pl_r$, $\overline c_0$ and $\xi_2,\tau_{2R}\,\xi_1$). The precise cutoff functions are
\begin{align*}
B^{(\mi \infty,R+1]} :=  \tau_{\mi R} \Bigl( \tfrac{\b^2}{\b^2 + (1-\b)^2} \Bigr) , \quad
&B^{[R-1,\infty)} :=  \tau_{\mi R} \Bigl( \tfrac{(1-\b)^2}{\b^2 + (1-\b)^2} \Bigr) , \phantom{\int_{B_B}} \\
 B^{[\pmi R-1,\pmi R+1]} :=  \tau_{\mip R} \Bigl( \tfrac{\b(1-\b)}{\b^2 + (1-\b)^2} \Bigr) , \quad
&B^{[\pmi R-1, \pmi R+1]}_s :=    \tau_{\mip R}\Bigl(\tfrac{\partial_s \b}{\b^2 + (1-\b)^2}\Bigr) .
\end{align*}
Next, note that \eqref{DPhi} continues to hold for $r=0$ with error term
\begin{align*}  
E_i(0,\ul{e},\xi_1,\xi_2) 
&= \bigl(J(\g_i + e_i)-J(\g_i) \bigr) \, \partial_t  \xi_i 
+ \bigl( F_{\g_i+e_i} - F_{\g_i} \bigr) \,\xi_i  ,
\end{align*}
which coincides with the previous formulas for $E_i(r,\ul{e},\xi_1,\xi_2)$ if for $R=\infty$ we set
${B^{(\mi\infty,R+1]}\equiv 1}$, $B^{[R-1,\infty)}\equiv 0$, $B^{[R-1,R+1]}\equiv 0$, $B_s^{[R-1,R+1]}\equiv 0$.
Moreover, note that $\ul e=0$ yields $E_i(0,0,\xi_1,\xi_2)=0$, and for $r>0$ we can split this error 
$$
E_i(r,0,\xi_1,\xi_2) = E'_i(r,\xi_1,\xi_2) + B_i(r,\xi_1,\xi_2) 
$$
into terms $E'_i$ that are controlled by $\g^\pmi_r - \overline c_0$ 
and compactly supported zeroth order terms $B_i$. 
For $i=1$ the first error type is given by 
\begin{align*}  
& E'_1(r,\xi_1,\xi_2)   \\
&:= 
 B^{[R-1,\infty)}  \bigl(J(\overline c_0)-J(\g^{\mi}_r) \bigr) \, \partial_t  \xi_1  
\;+\;
B^{[R-1,\infty)}  \bigl( F_{\overline c_0} - F_{\g^{\mi}_r} \bigr) \,\xi_1 \\
&
\;+\; B^{[R-1,R+1]}  \bigl( F_{\g^{\mi}_r} - F_{\overline c_0} \bigr)\,  \tau_{\mi 2R}\,\xi_2 
\;+\;  B^{[R-1,R+1]} \bigl(J(\g^{\mi}_r)-J(\overline c_0) \bigr)\, \tau_{\mi 2R}\,\partial_t \xi_2 . 
\end{align*}
Note that this is supported in $[R-1,\infty)\times S^1$ and on $[R+1,\infty)\times S^1$ simplifies due to $B^{[R-1,\infty)}|_{[R+1,\infty)\times S^1} \equiv 1$ to  
\begin{align}\label{eq:E1simple}  
E'_1(r,\xi_1,\xi_2)|_{[R+1,\infty)\times S^1}   
&= 
 \bigl(J(\overline c_0)-J(\g^{\mi}_r) \bigr) \, \partial_t  \xi_1  
\;+\;
  \bigl( F_{\overline c_0} - F_{\g^{\mi}_r} \bigr) \,\xi_1   .
\end{align}
Analogously, $E'_2(r,\xi_1,\xi_2)$ is supported in $(- \infty,\mi R+1]\times S^1$ and simplifies to 
\begin{align*}
E'_2(r,\xi_1,\xi_2)|_{(\mi \infty,\mi R-1]\times S^1}   
&= 
 \bigl(J(\overline c_0)-J(\g^\pl_r) \bigr) \, \partial_t  \xi_2  
\;+\;
  \bigl( F_{\overline c_0} - F_{\g^\pl_r} \bigr) \,\xi_2  . 
\end{align*}
The second error types for $i=1,2$ can be rewritten as
\begin{align}
& B_1(r,\xi_1,\xi_2) := B_s^{[R-1,R+1]} \, \tau_{\mi R} \bigl((2\b - 1) \tau_R \xi_1 -  \tau_{\mi R}\, \xi_2 \bigr)   \nonumber \\
&\qquad\qquad\; =
\tau_{\mi R}\Bigl( \tfrac{\partial_s \b (2\b^2 -1)}{(\b^2 + (1-\b)^2)^2} \oplus_R\!(\xi_1,\xi_2) 
+ \tfrac{\partial_s \b (2\b^2 - 4\b +1)}{(\b^2 + (1-\b)^2)^2}\ominus_R\!(\xi_1,\xi_2) \Bigr) ,
\label{eq:Bs1} \\
& B_2(r,\xi_1,\xi_2) := B_s^{[\mi R-1,\mi R+1]} \, \tau_{R} \bigl(  (2\b - 1) \tau_{\mi R}\, \xi_2 + \tau_R \xi_1 \bigr)  \nonumber \\
&\qquad\qquad\; =
\tau_{R}\Bigl(\tfrac{\mi \partial_s \b (2\b^2 - 4\b +1)}{(\b^2 + (1-\b)^2)^2}
 \oplus_R\!(\xi_1,\xi_2) + \tfrac{\partial_s \b (2\b^2 -1) }{(\b^2 + (1-\b)^2)^2} \ominus_R\!(\xi_1,\xi_2) \Bigr). \nonumber
\end{align}

\subsection{The filled Floer section is sc-Fredholm} \label{ss:proof}

To finish the proof of the Fredholm property in Theorem~\ref{mainthm} this section verifies the requirements (ii-a), (ii-b), and (iii) of Definition~\ref{def:scfred} for the map $\Phi:[0,\eps)\times\B\to \E$  given by \eqref{Phi} and its partial differential $\rD_\B\Phi$ that we calculated in \eqref{DPhi}.
Throughout, we will denote by $C_m>0$ all constants that depend only on $m\in\N_0$, even if they change from line to line. We will also use the convention that norms $\|\ldots\|$ without a subscript are the same as the norm specified at the end of the line.

\begin{lemma} \label{le1}
Property (ii-a) of Definition~\ref{def:scfred} holds. That is, given $m\in\N$, $w\in[0,\d)$, and $\theta>0$ there exists $\eps>0$ so that for all $r\in [0, \eps)$ and $\ul e, \ul e'\in B_{m+1}$ with $\|e_i\|_{H^{m+1,w}}, \|e_i'\|_{H^{m+1,w}} < \eps$, and $\uxi=(\xi_1,\xi_2) \in B_m$ we have 
$$
\bigl\| \rD_{\B}\Phi(r,\ul e) \uxi  - \rD_{\B}\Phi(r,\ul e') \uxi \bigr\|_{H^{m,w}}  \le \theta \bigl( \| \xi_1 \|_{H^{m+1,w}} + \| \xi_2 \|_{H^{m+1,w}} \bigr)  .
$$
\end{lemma}
\begin{proof}
To begin, note that the left hand side splits into similar terms for $i=1,2$,
\begin{align*}
 \bigl\| \rD_{\B}\Phi(r,\ul e) \uxi  - \rD_{\B}\Phi(r,\ul e') \uxi \bigr\|_{H^{m,w}} 
=
\textstyle\sum_{i=1,2} \bigl\| E_i(r,\ul e,\uxi)  - E_i(r,\ul e',\uxi) \bigr\|_{H^{m,w}} 
\end{align*}
We establish the estimate representatively for the $i=1$ term, starting from
\begin{align*}
&\bigl\| E_1(r,\ul e,\uxi)  - E_1(r,\ul e',\uxi) \bigr\|_{H^{m,w}} \\
&
\le\; 
C_m \Bigl(
 \bigl\| \bigl( J(\g^{e\,\mi}_r)-J(\g^{e'\,\mi}_r) \bigr) \partial_t  \xi_1 \bigr\|
+ \bigl\| \bigl( F_{\g^{e\, \mi}_r} - F_{\g^{e'\,\mi}_r} \bigr) \xi_1 \bigr\|_{H^{m,w}((\mi\infty,R+1]\times S^1)}   \\
&\quad\qquad\quad\qquad\qquad\quad\;\;
+ 
\bigl\| \bigl( J(\g^{e\,\mi}_r)-J(\g^{e'\,\mi}_r) \bigr) \tau_{\mi 2R}\,\partial_t \xi_2 \bigr\|_{H^{m,w}([R-1,R+1]\times S^1)} \\
&\quad\qquad\qquad\qquad\qquad\qquad\quad\;
+  \bigl\| \bigl( F_{\g^{e\,\mi}_r} - F_{\g^{e'\,\mi}_r} \bigr) \tau_{\mi 2R}\,\xi_2 \bigr\|_{H^{m,w}([R-1,R+1]\times S^1)}  \Bigr) 
\end{align*}
Here the $\cC^m$-norms of the cutoff functions $\bigl\| B^{(\mi \infty,R+1]} \bigr\|_{\cC^m} = \bigl\| \tfrac{\b^2}{\b^2 + (1-\b)^2} \bigr\|_{\cC^m}$ and $\bigl\| B^{[R-1,R+1]} \bigr\|_{\cC^m} = \bigl\| \tfrac{\b(1-\b)}{\b^2 + (1-\b)^2} \bigr\|_{\cC^m}$ are bounded independently of $R$ since, up to a shift, they are given in terms of the fixed function $\b\in\cC^\infty(\R,[0,1])$. The second of these functions is supported in $[R-1,R+1]$ since $\b(1-\b)$ is supported in $[\mi 1,1]$. Now we use the Sobolev embeddings $H^{m+1}\hookrightarrow \cC^{m-1}$ and $H^1\hookrightarrow L^4$ on compact $2$-dimensional domains to estimate
\footnote{
This is the one estimate in the proof of Theorem~\ref{mainthm} where we cannot work with the polyfold version of a Cauchy-Riemann operator $H^1\to H^0$, corresponding to $m=0$ in this Lemma. This is because $H^k$ is closed under multiplication (on compact domains) only for $k>1$. 
More precisely, the case $m=0$ would require an estimate of the error term 
$\bigl\| (\rD_{\g} J)(\xi) \partial_t \g - (\rD_{\g'} J)(\xi) \partial_t \g' \bigr\|_{L^2([\mi 1,1]\times S^1)}$
in terms of $\| \g-\g' \|_{H^1}, \|\xi\|_{H^1}$. 

In the Morse setting, this term is absent and we have Sobolev embeddings $H^m\hookrightarrow\cC^m$, so that this Lemma also holds for $m=0$, although a polyfold setup works with $m\ge 1$ due to the slicing condition in Remarks~\ref{rmk:gen2}, \ref{rmk:gen4}.
}

\begin{align*}
& \tfrac 1{C_m} \bigl\| E_1(r,\ul e,\uxi)  - E_1(r,\ul e',\uxi) \bigr\|_{H^{m,w}} \\
&\quad\;\le\; 
\bigl\| \g^{e\,\mi}_r \! - \g^{e'\,\mi}_r \bigr\|_{\cC^{m-1}} \|  \xi_1\| _{H^{m+1,w}}   
+ \bigl\| \nabla^m( \g^{e\,\mi}_r \!- \g^{e'\,\mi}_r ) \bigr\|_{L^4} \| e^{w \eta} \xi_1\| _{W^{1,4}}  \nonumber\\
&\quad\quad + \bigl\|  \partial_t\g^{e\,\mi}_r \!- \partial_t\g^{e'\,\mi}_r  \bigr\|_{H^{m}} \| e^{w \eta} \xi_1\| _{\cC^0 }  \nonumber\\
&\quad\quad + \bigl\| \g^{e\,\mi}_r \! - \g^{e'\,\mi}_r \bigr\|_{\cC^{m-1}([R-1,R+1]\times S^1)} \| \tau_{\mi 2R}\,\xi_2 \| _{H^{m+1,w}([R-1,R+1]\times S^1)} \\
&\quad\quad + \bigl\| \nabla^m( \g^{e\,\mi}_r \!- \g^{e'\,\mi}_r ) \bigr\|_{L^4([R-1,R+1]\times S^1)} \| e^{w \eta} \tau_{\mi 2R}\,\xi_2\| _{W^{1,4}([R-1,R+1]\times S^1)} \\
&\quad\quad + \bigl\| \partial_t\g^{e\,\mi}_r \!- \partial_t\g^{e'\,\mi}_r ) \bigr\|_{H^{m}([R-1,R+1]\times S^1)} \| e^{w \eta} \tau_{\mi 2R}\,\xi_2\| _{\cC^0([R-1,R+1]\times S^1)} \\
&\quad\;\le\; C_m \bigl\| \g^{e\,\mi}_r \! - \g^{e'\,\mi}_r \bigr\|_{H^{m+1}} \bigl(  \|  \xi_1\| _{H^{m+1,w}}  + \| \tau_{\mi 2R}\xi_2 \| _{H^{m+1,w}([R-1,R+1]\times S^1)} \bigr).
\end{align*}
Finally, we need to estimate the effect of the weight functions $e^{w \eta}$ for the fixed smooth function $\eta$ with $\eta(s)=|s|$ for $|s|\ge 1$. Note here that all derivatives of $\eta$ are bounded so that we can bound $\| e^{\mi  w\eta}\|_{\cC^{m+1}(I)}\leq \eta_m \sup_{s\in I} e^{\mi w |s|}\leq \eta_m$ on any interval $I\subset\R$ in terms of some constants $\eta_m\ge 1$. 
With that we can estimate
\begin{align*}
& \bigl\| \g^{e\,\mi}_r - \g^{e'\,\mi}_r  \bigr\|_{H^{m+1}}  
\;=\; 
\| \tau_{\mi R}\b \cdot (e_1-e'_1) + (1- \tau_{\mi R}\b) \tau_{\mi 2R}(e_2-e'_2) \|_{H^{m+1}} \\
&\qquad\qquad\qquad\qquad\;\le\; 
\|\beta \|_{\cC^{m+1}} \| e^{\mi  w\eta}\|_{\cC^{m+1}} \| e^{w \eta} ( e_1-e'_1 ) \|_{H^{m+1}}  \\
&\qquad\qquad\qquad\qquad\qquad +  
\|1-\beta \|_{\cC^{m+1}}  \| e^{\mi  w\eta}\|_{\cC^{m+1}} \| e^{w \eta} (e_2-e'_2 ) \|_{H^{m+1}} \\
&\qquad\qquad\qquad\qquad\;\le\; 
C_m \bigl(
\| e_1 - e_1' \|_{H^{m+1,w}} +  \| e_2 - e_2' \|_{H^{m+1,w}} \bigr), \\
&\| \tau_{\mi 2R}\xi_2 \| _{H^{m+1,w}([R-1,R+1]\times S^1)} \\
&\qquad\quad=\; 
\| e^{w \tau_{2R}\eta }  \xi_2 \| _{H^{m+1}([\mi R-1,\mi R+1]\times S^1)}  \\
&\qquad\quad\le\; 
\| e^{w (\tau_{2R}\eta - \eta) } \|_{\cC^{m+1}([\mi R-1,\mi R+1])}  \|e^{w\eta}  \xi_2 \|_{H^{m+1}}  
\;\le\; 
C_m \| \xi_2 \| _{H^{m+1,w}}  
\end{align*}
Here the last constant $C_m$ arises from the maximal derivative up to order $m$ of the function 
$s\mapsto e^{w (\eta(R+s) - \eta(\mi R+s))}$ for $s\in[\mi 1,1]$, which for $R\ge 2$ simplifies to 
$s\mapsto e^{2 w s}$. 
Plugging this into the above estimate for $E_1$
and its analogue for $E_2$ we obtain 
\begin{align*}
& \bigl\| \rD_{\B}\Phi(r,\ul e) \uxi  - \rD_{\B}\Phi(r,\ul e') \uxi \bigr\|_{H^{m,w}} \\
&\;\le\; 
C_m \bigl( \| e_1 - e'_1 \|_{H^{m+1,w}} + \| e_2 - e'_2 \|_{H^{m+1,w}} \bigr)
\bigl(  \|\xi_1\| _{H^{m+1,w}} +  \|\xi_2\| _{H^{m+1,w}}  \bigr)  .
\end{align*}
This proves the lemma with $\eps=\theta / 4 C_m$. 
\end{proof}

Before proving the Fredholm properties (ii-b) and (iii) we note a key estimate.

\begin{lemma} \label{lem:est}
For any $m\in\N_0$ and weight $w\in\R$ there is a constant $C>0$ 
so that for all $\uxi=(\xi_1,\xi_2)\in B_m$ and  $r>0$ with $R=e^{1/r}\ge 2$ we have 
\begin{align*}
& \bigl\| \rD_{\B}\Phi(r,0) \uxi -  \rD_{\B}\Phi(0,0)\uxi - B_1(r,\uxi) - B_2(r,\uxi) \bigr\|_{H^{m,w}} \\
&\qquad\le
C_m
\bigl( \| \g_1 \|_{\cC^{m+1}([R-1,\infty)\times S^1) } 
+\|  \g_2 \|_{\cC^{m+1}((\mi \infty,\mi R+1]\times S^1) } \bigr) 
\| \uxi \|_{H^{m+1,w}} 
\end{align*}
\end{lemma}
\begin{proof}
To begin, note that all summands in the operator difference 
\begin{align*}
& \bigl\| \rD_{\B}\Phi(r,0) \uxi -  \rD_{\B}\Phi(0,0)\uxi - B_1(r,\uxi)-B_2(r,\uxi) \bigr\|_{H^{m,w}} \\
&\qquad =\phantom{+}
\bigl\| \bigl( J(\g^\mi_{r}) - J(\g_1)\bigr) \partial_t \xi_1 - \bigl( F_{\g^\mi_{r}} - F_{\g_1}\bigr) \xi_1 + E'_1(r,\uxi)  \bigr\|_{H^{m,w}} \\
&\qquad\phantom{=}\, + 
\bigl\| \bigl( J(\g^\pl_{r}) - J(\g_2)\bigr) \partial_t \xi_2 - \bigl( F_{\g^+_{r}} - F_{\g_2}\bigr) \xi_2 + E'_2(r,\uxi)  \bigr\|_{H^{m,w}}.
\end{align*}
are supported outside of ${[- R+1,R-1]\times S^1}$ since  $(\g^\mi_{r}- \g_1)_{(\mi \infty,R-1]\times S^1}=0$ and $(\g^\pl_{r} - \g_2)|_{[\mi R+1,\infty)\times S^1}=0$.
Moreover, outside of ${[- R-1,R+1]\times S^1}$ the error terms $E'_i:=E'_i(r,\uxi)$ simplify as in \eqref{eq:E1simple} to yield
\begin{align*}
& \bigl\| \rD_{\B}\Phi(r,0) \uxi -  \rD_{\B}\Phi(0,0) \uxi  - B_1(r,\uxi)-B_2(r,\uxi) \bigr\|_{H^{m,w}((\R\less [\mi R-1,R+1])\times S^1)} \\
&=  \phantom{+} \bigl\| \bigl( J(\g^{\mi}_{r}) - J(\g_1)\bigr) \partial_t \xi_1 - \bigl( F_{\g^\mi_{r}} - F_{\g_1}\bigr) \xi_1 + E'_1  \bigr\|_{H^{m,w}([R+1,\infty)\times S^1)} \\
&\quad +
\bigl\| \bigl( J(\g^{+}_{r}) - J(\g_2)\bigr) \partial_t \xi_2 - \bigl( F_{\g^\pl_{r}} - F_{\g_2}\bigr) \xi_2 + E'_2  \bigr\|_{H^{m,w}((\mi \infty,\mi R-1]\times S^1)} \\
&= \phantom{+}
  \bigl\| \bigl(J(\overline c_0) - J(\g_1)\bigr) \partial_t \xi_1 - \bigl( F_{\overline c_0} - F_{\g_1}\bigr) \xi_1  \bigr\|_{H^{m,w}([R+1,\infty)\times S^1)}\\
 &\quad +
 \bigl\| \bigl(J(\overline c_0) - J(\g_2)\bigr) \partial_t \xi_2 - \bigl( F_{\overline c_0} - F_{\g_2}\bigr) \xi_2  \bigr\|_{H^{m,w}((\mi \infty,\mi R-1]\times S^1)}\\
&\le
 C_m \bigl( \| \overline c_0 - \g_1\bigr\|_{\cC^{m+1}([R+1,\infty)\times S^1) }  +
  \| \overline c_0 - \g_2\bigr\|_{\cC^{m+1}((\mi \infty,\mi R-1]\times S^1) } \bigr) \| \uxi \|_{H^{m+1,w}}  
\end{align*}
On the remaining finite cylinder $[-R-1,R+1]\times S^1$ we have the estimate 
\begin{align*}
& \bigl\| \rD_{\B}\Phi(r,0) \uxi -  \rD_{\B}\Phi(0,0)\uxi  - B_1(r,\uxi)-B_2(r,\uxi) \bigr\|_{H^{m,w}([\mi R-1,R+1]\times S^1)} \\
&\quad= \bigl\| \bigl( J(\g^\mi_{r}) - J(\g_1)\bigr) \partial_t \xi_1 
- \bigl( F_{\g^\mi_{r}} - F_{\g_1}\bigr) \xi_1 + E'_1 \, \bigr\|_{H^{m,w}([R-1,R+1]\times S^1)} \\
&\qquad + \bigl\| \bigl( J(\g^\pl_{r}) - J(\g_2)\bigr) \partial_t \xi_2 
- \bigl( F_{\g^\pl_{r}} - F_{\g_2}\bigr) \xi_2 + E'_2 \, \bigr\|_{H^{m,w}([\mi R-1,\mi R+1]\times S^1)} \\
&\quad\le
C_m
\bigl( \| \g^\mi_{r} - \g_1 \| +  \| \overline c_0 - \g^\mi_{r} \|_{\cC^{m+1}([R-1,R+1]\times S^1) } \bigr) \\
&\quad\qquad\qquad\cdot
\bigl(
\| \xi_1 \| + \| \tau_{\mi 2R}\,\xi_2  \|_{H^{m+1,w}([R-1,R+1]\times S^1)} \bigr) \\
&\qquad+ C_m
\bigl( \| \g^\pl_{r} - \g_2 \| +  \| \overline c_0 - \g^\pl_{r} \|_{\cC^{m+1}([\mi R-1,\mi R+1]\times S^1) } \bigr) \\
&\qquad\qquad\qquad\cdot
\bigl(
\| \xi_2 \| + \| \tau_{2R}\,\xi_1  \|_{H^{m+1,w}([\mi R-1,\mi R+1]\times S^1)} \bigr) .
\end{align*}
Now recall $\overline c_0 \equiv 0$, $\g^-_0=\g_1$, and $\g^+_0=\g_2$, so we can estimate
\begin{align*}
&\| \g^\mip_{r} - \g^\mip_0 \| + \| \g^\mip_{r} - \overline c_0 \|_{\cC^{m+1}([\pmi R-1,\pmi R+1]\times S^1) } \\
&\le
2\| \b \tau_{R}\g_1 \| + 2 \| (\b-1) \tau_{\mi  R} \g_2 \| + \| (1-\b) \tau_{R}\g_1\| + \| \b  \tau_{\mi R}\g_2\|_{\cC^{m+1}([\mi 1,1]\times S^1) }   \\
& \le
C_m \bigl( \| \g_1 \|_{\cC^{m+1}([R-1,R+1]\times S^1) } 
+\|  \g_2 \|_{\cC^{m+1}([\mi R-1,\mi R+1]\times S^1) } \bigr) .
\end{align*}
Finally, for $R \ge 2$ we bound the effect of the shift in $\tau_{\mip 2R}\,\xi |_{[\pmi R-1,\pmi R+1]\times S^1}$ by 
\begin{align*}
& \| \tau_{\mip 2R}\,\xi  \|_{H^{m+1,w}([\pmi R-1,\pmi R+1]\times S^1)}
\;=\;
 \|  e^{w |s\pmi 2R|} \xi  \|_{H^{m+1}([\mip R-1,\mip R+1]\times S^1)}\\
& \;=\;
 \|  e^{2w (R\pmi s )} e^{\mip w s }  \xi  \|_{H^{m+1}([\mip R-1,\mip R+1]\times S^1)} 
\;\leq\; \| e^{\pmi 2w s} \|_{\cC^{m+1}([\mi 1,1])} \|\xi  \|_{H^{m+1,w}}  
\end{align*}
Putting all these together yields what remained to be established,
\begin{align*}
& \bigl\| \rD_{\B}\Phi(r,0) \uxi -  \rD_{\B}\Phi(0,0) \uxi   - B_1(r,\uxi)-B_2(r,\uxi) \bigr\|_{H^{m,w}([\mi R-1,R+1]\times S^1)} \\
& \leq 
C_{m,w} \bigl( \| \g_1 \|_{\cC^{m+1}([R-1,R+1]\times S^1) } 
+\|  \g_2 \|_{\cC^{m+1}([\mi R-1,\mi R+1]\times S^1) } \bigr)
\| \uxi \|_{H^{m+1,w}}. 
\end{align*}
\end{proof}

\begin{lemma} \label{le2}
Property (ii-b) of Definition~\ref{def:scfred} holds without subsequences both for the sc-Banach spaces $\B,\E$ in \eqref{eq:BE} and their extensions \eqref{eq:BEext}. 
That is, for fixed $m\in\N_0$, $w\in (-\d,\d)$, and sequences $0<r^\nu\to 0$, $\uxi^\nu\in B_m$ with $\|\uxi^\nu\|_{H^{m+1,w}}< 1$ and ${\| \rD_{\B}\Phi(r^\nu,0) \uxi^\nu \|_{H^{m,w}} \to 0}$, we also have $\| \rD_{\B}\Phi(0,0) \uxi^\nu \|_{H^{m,w}}\to 0$.
\end{lemma}
\begin{proof}
Let $r^\nu,\uxi^\nu=(\xi_1^\nu,\xi_2^\nu)$ be such sequences and set $R^\nu:=e^{1/r^\nu}\to \infty$, then it suffices to show
$\bigl\| \rD_{\B}\Phi(r^\nu,0) \uxi^\nu -  \rD_{\B}\Phi(0,0)\uxi^\nu \bigr\|_{H^{m,w}}\underset{\nu\to\infty}\to 0$. 
Lemma~\ref{lem:est} gives\footnote{The estimate for general limit orbits $c_0\not\equiv 0$ involves terms $\| \g_1 - \overline c_0\|$ and $\| \g_2 - \overline c_0 \|$.}
\begin{align*}
& \bigl\| \rD_{\B}\Phi(r^\nu,0) \uxi^\nu -  \rD_{\B}\Phi(0,0)\uxi^\nu - B_1(r^\nu,\uxi^\nu) - B_2(r^\nu,\uxi^\nu) \bigr\|_{H^{m,w}} \\
&\qquad\le
C_m \bigl( \| \g_1 \|_{\cC^{m+1}([R^\nu-1,\infty) \times S^1) } +  \| \g_2 \|_{\cC^{m+1}((\mi \infty,\mi R^\nu+1] \times S^1) } \bigr)  \| \uxi^\nu \|_{H^{m+1,w}} 
\end{align*}
These norms converges to $0$ due to the bounds $\| \xi^\nu \|_{H^{m+1,w}}<1$ and the limits $\lim_{s\to\pm\infty}\g_i(s,t)= c_0$ which are uniform with all derivatives.
It remains to show convergence to $0$ of $B_i(r^\nu,\uxi^\nu)$ which is supported on $[\pm R^\nu-1,\pm R^\nu+1]\times S^1$.
Using \eqref{eq:Bs1} we can estimate these by\footnote{
The error terms $B^i(r^\nu,\xi_1^\nu,\xi^2_\nu)$ arising from the derivative of the cutoff function $\partial_s\beta$ could alternatively be bounded in terms of $\| \xi_i^\nu \|_{H^{m,w}([\pmi R^\nu-1,\pmi R^\nu+1]\times S^1)}$ and 
$\| \tau_{\mip 2R^\nu}\,\xi_i  \|_{H^{m,w}([\pmi R^\nu-1,\pmi R^\nu+1]\times S^1)} $. However -- despite compact Sobolev embeddings $H^{m+1}\hookrightarrow H^m$ on finite cylinders -- these norms generally do not converge to $0$. Consider e.g.\ $\xi_1^\nu (s,t) = e^{\mi w s} \psi(s-R^\nu)$ with a smooth bump function $\psi$ supported in $[\mi 1,1]$.}

\begin{align}
& \bigl\| B_1(r^\nu,\uxi^\nu) + B_2(r^\nu,\uxi^\nu) \bigr\|_{H^{m,w}} \nonumber \\
&\qquad\leq  C_m \bigl(  
\|\tau_{\mi R^\nu}  \oplus_{R^\nu}\!\!(\uxi^\nu) \|
+ \|\tau_{\mi R^\nu}  \ominus_{R^\nu}\!\!(\uxi^\nu) \|_{H^{m,w}([R^\nu-1,R^\nu+1]\times S^1)} \label{eq:Best}\\
&\qquad\qquad\quad  +  
\|\tau_{R^\nu}  \oplus_{R^\nu}\!\!(\uxi^\nu) \|
+ \|\tau_{R^\nu}  \ominus_{R^\nu}\!\!(\uxi^\nu) \|_{H^{m,w}([\mi R^\nu-1,\mi R^\nu+1]\times S^1)} \bigr) \nonumber \\
&\qquad\leq  C_m e^{w R^\nu}   \bigl(  
\| \oplus_{R^\nu}\!\!(\uxi^\nu) \|_{H^m([\mi 1,1]\times S^1)}  
+ \|\ominus_{R^\nu}\!\!(\uxi^\nu) \|_{H^m([\mi 1,1]\times S^1)}  \bigr), \nonumber
\end{align}
where the last step estimates the effect of shifts and weight functions by 
$$
\|\tau_{\mip R} \zeta \|_{\scriptscriptstyle H^{m,w}([\pmi R-1,\pmi R+1]\times S^1)}
= \| e^{w \tau_{\pm R}\eta}   \zeta \|_{\scriptscriptstyle H^m([\mi 1,1]\times S^1)}
 \le C_m e^{w R} \| \zeta \|_{\scriptscriptstyle H^m([\mi 1,1]\times S^1)}  
$$
We will now bound $\oplus_{R^\nu}(\uxi^\nu), \ominus_{R^\nu}(\uxi^\nu)$ in terms of $\rD_{\B}\Phi(r^\nu,0) (\uxi^\nu)\to 0$ by using the invertibility of the filler $\rD_{\overline c_0} \pbar$ which we establish in Lemma~\ref{lem:invert} below. Recall from \eqref{defDPhi} that linearizing the defining equation \eqref{Phi} for $\Phi$
yields an identity for $\rD_{\B}\Phi(r,0) (\uxi)=:(\phi_1,\phi_2)$
\begin{align*}
\left( \begin{aligned}
(\rD_{\g_r} \pbar) \oplus_R \! (\uxi)  
\\
(\rD_{\overline c_0} \pbar) \ominus_R \! (\uxi)  
\end{aligned}
\right)
&=
(\widehat\oplus_R \times \widehat\ominus_R) \rD_\B\Phi(r,0)(\uxi) 
=
\left( \begin{aligned}
\beta \tau_R \phi_1 + (1-\beta) \tau_{\mi R} \phi_2
\\
(\beta-1) \tau_R \phi_1 + \beta \tau_{\mi R} \phi_2
\end{aligned}
\right) 
\end{align*}
Using the invertibility of $\rD_{\overline c_0} \pbar:H^{m+1,w}(\R\times S^1)\to H^{m,w}(\R\times S^1)$ this yields 
\begin{align*}
& C_m^{\mi 1} \| \ominus_R \! (\uxi) \|_{H^m([\mi 1,1]\times S^1)}
\leq C_m^{\mi 1}  \|  \ominus_R \! (\uxi) \|_{H^{m+1,w}(\R\times S^1)} \\
&\qquad \leq 
\bigl\| (\rD_{\overline c_0} \pbar) \ominus_R \! (\uxi)  \bigr\|_{H^{m,w}(\R\times S^1)} 
= \| (\beta-1) \tau_R \phi_1 + \beta \tau_{\mi R} \phi_2  \|_{H^{m,w}(\R\times S^1)} \\
&\qquad\leq 
 \| \b \|_{\cC^m} \bigl( \| \tau_R \phi_1 \|_{H^{m,w}([\mi 1,\infty)\times S^1)} + \| \tau_{\mi R} \phi_2 \|_{H^{m,w}((\mi \infty,1]\times S^1)} \bigr)\\
&\qquad\leq 
 \| \b \|_{\cC^m} \bigl( \|e^{w(\tau_{\mi R}\eta - \eta)}\|_{\cC^m([R-1,\infty)\times S^1))}
 \| e^{w\eta} \phi_1 \|_{H^{m}([R-1,\infty)\times S^1)}  \\
&\qquad\phantom{\leq    \| \b \|_{\cC^m} \bigl( }
+  \|e^{w(\tau_R\eta -\eta)}\|_{\cC^m(\mi \infty,\mi R+1]\times S^1))}
\| e^{w\eta} \phi_2 \|_{H^{m}((\mi \infty,\mi R+1]\times S^1)} \bigr)\\
&\qquad\leq 
C_m e^{\mi w R} \bigl (  \|\phi_1\|_{H^{m,w}} + \|\phi_2\|_{H^{m,w}}  \bigr)
= C_m e^{\mi w R} \|\rD_{\B}\Phi(r,0) \uxi\|_{H^{m,w}}.
\end{align*}
In our estimate \eqref{eq:Best} for $\|B_i(r^\nu,\uxi^\nu)\|_{H^{m,w}}$, the above
bounds the second term $e^{w R^\nu} \| \ominus_{R^\nu} \! (\xi^\nu_1,\xi_2^\nu) \|_{H^{m}([\mi 1,1]\times S^1)}\leq C_m \|\rD_{\B}\Phi(r^\nu,0) \uxi^\nu \|_{H^{m,w}}$ by a quantity that converges to $0$ by assumption. 
To prove convergence of the first term in \eqref{eq:Best},
$e^{w R^\nu} \| \oplus_{R^\nu}\!(\uxi^\nu) \|_{H^m([\mi 1,1]\times S^1)} \to 0$, 
we need some preparations:

We fix a family of cutoff functions $\psi_T\in\cC^\infty(\R,[0,1])$ for $T\ge 2$ supported in $[\mi T,T]$ with $\psi|_{[\mi T+1, T-1]}\equiv 1$ with uniformly bounded derivatives.
Next, we use a weight $w'\in(-\d,-|w|)$ in Lemma~\ref{lem:invert} which gives rise to a constant $\eps'>0$. Then for sufficiently large $R=e^{1/r}$, and $T\in [2, \frac 12 R]$ we wish to pick an extension $\g_{r,T}\in\cC^\infty(\R\times S^1,\C^n)$ of $\g_{r,T}|_{[\mi T,T]}=\g_r|_{[\mi T,T]}$ with ${\| \g_{r,T} - \overline c_0 \|_{\cC^{m+1}}<\eps'}$.
Such extensions require $\| \g_r - \overline c_0 \|_{\cC^{m+1}([\mi T,T]\times S^1)}$ to be sufficiently small -- depending on $\eps'$ and $m$, but independent of the length $2T$ of the given interval.
Large $R\ge 2T$ will guarantee this since uniform convergence $\g_i \to \overline c_0 =0$ for $s\to\pm \infty$ yields
\begin{align*}
& \| \g_r - \overline c_0 \|_{\cC^{m+1}([\mi T,T]\times S^1)} 
= \| \b \tau_R \g_1 + (1-\b)\tau_{\mi R}\g_2 \|\|_{\cC^{m+1}([\mi T,T]\times S^1)} \\
&\qquad \leq \|\b\|_{\cC^{m+1}}\bigl( \|\g_1 \|_{\cC^{m+1}([R-T,R+T]\times S^1)} +  \|\g_2 \|_{\cC^{m+1}([\mi R-T,\mi R+T]\times S^1)} \bigr) \\
&\qquad\leq \|\b\|_{\cC^{m+1}}\bigl( \|\g_1 \|_{\cC^{m+1}([R/2,\infty)\times S^1)} +  \|\g_2 \|_{\cC^{m+1}((\mi \infty,\mi R/2]\times S^1)} \bigr) \;\underset{R\to\infty}{\longrightarrow}\; 0.
\end{align*}
After these preparations we can use the estimate for the linearized Floer operator $\rD_{\g_{r,T}} \pbar: H^{m+1,w'}(\R\times S^1)\to H^{m,w'}(\R\times S^1)$ from Lemma~\ref{lem:invert} to bound
\begin{align*}
& C_m^{\mi 1} \| \psi_T \oplus_R \! (\uxi) \|_{H^{m+1,w'}} \\
&\leq 
\bigl\| (\rD_{\g_{r,T}} \pbar) \bigl( \psi_T \oplus_R \! (\uxi) \bigr) \bigr\|_{H^{m,w'}} 
= \bigl\| (\rD_{\g_r} \pbar) \bigl( \psi_T \oplus_R \! (\uxi) \bigr) \bigr\|_{H^{m,w'}([\mi T,T]\times S^1)} \\
&\leq 
\bigl\| \tfrac{\rd}{\rd s}\psi_T \cdot \oplus_R  (\uxi) \bigr\|_{H^{m,w'}}
+ \| \psi_T\bigl( \beta \tau_R \phi_1 + (1-\beta) \tau_{\mi R} \phi_2 \bigr) \|_{H^{m,w'}([\mi T,T]\times S^1)} \\
&\leq 
 \| \psi_T \|_{\cC^{m+1}} \| \b \|_{\cC^m}
\bigl( \| \tau_R \xi_1 \|_{H^{m,w'}(\supp\psi'\times S^1)} +  \| \tau_{\mi R} \xi_2  \|_{H^{m,w'}(\supp\psi'\times S^1)} \\
& \phantom {\leq\| \psi_T \|_{\cC^{m+1}} \| \b \|_{\cC^m} \bigl(  } 
+  \| \tau_R \phi_1 \|_{H^{m,w'}([\mi T,T]\times S^1)} + \| \tau_{\mi R} \phi_2 \|_{H^{m,w'}([\mi T,T]\times S^1)} \bigr)\\
&\leq 
C_m e^{\mi w R} \bigl (  e^{(w'+|w|)T } \tsum_{i=1,2} \|\xi_i\|_{H^{m,w}} +
e^{|w| T} \tsum_{i=1,2} \|\phi_i\|_{H^{m,w}} \bigr) .
\end{align*}
Here the last step uses the general comparison between weighted Sobolev norms on finite cylinders for $R\ge 2T \ge T+2$
\begin{align*}
 \| \tau_{\pmi R} \xi \|_{H^{m,w'}(\supp\psi'\times S^1)} 
& \le \| e^{w'\eta}\|_{\cC^m(\supp\psi')} \| \xi \|_{H^{m}([\pmi R-T,\pmi R+T]\times S^1)}\\
& \le \eta_m e^{w' (T-1)} \| e^{\mi w\eta}\|_{\cC^m([\pmi R-T,\pmi R+T]} \| e^{w \eta} \xi \|_{H^{m}} \\
&\le \eta_m^2 e^{w' (T-1)} e^{\mi w R + |w| T} \| \xi \|_{H^{m,w}} ,\\
 \| \tau_{\pmi R} \phi \|_{H^{m,w'}([\mi T,T]\times S^1)} 
& \le \| e^{w'\eta}\|_{\cC^m([\mi T,T]} \| \phi \|_{H^{m}([\pmi R-T,\pmi R+T]\times S^1)}\\
& \le \eta_m \| e^{\mi w\eta}\|_{\cC^m([\pmi R-T,\pmi R+T]} \| e^{w \eta} \phi \|_{H^{m}}\\
&\le \eta_m^2 e^{\mi w R +|w|T} \| \phi \|_{H^{m,w}}  
\end{align*}
Now inserting $r^\nu\to 0$, $\|\xi^\nu_i\|_{H^{m+1,w}}<1$ and appropriate $T^\nu\in[2,R^\nu/2]$ yields
\begin{align*}
&e^{w R^\nu}  
\| \oplus_{R^\nu}\!(\uxi^\nu) \|_{H^m([\mi 1,1]\times S^1)} 
\le e^{w R^\nu}  
\| \psi_{T^\nu} \oplus_{R^\nu}\!(\uxi^\nu) \|_{H^{m+1,w'}(\R\times S^1)}  \\
&\qquad\qquad\qquad\qquad\leq 
C_m^2 \bigl ( 2  e^{(w'+|w|)T^\nu } + e^{|w| T^\nu}
 \| \rD_{\B}\Phi(r^\nu,0) \uxi^\nu \|_{H^{m,w}} \bigr) \;\to\; 0 . 
\end{align*}
Here we need to choose $R^\nu/2 \ge T^\nu\to\infty$ for the first term to converge courtesy of $w'<-|w|$, and this is possible without losing convergence of the second term since $\| \rD_{\B}\Phi(r^\nu,0) \uxi^\nu \|_{H^{m,w}}\to 0$. 
This finishes the last step in the proof of convergence
$\bigl\| \rD_{\B}\Phi(r^\nu,0) \uxi^\nu -  \rD_{\B}\Phi(0,0)\uxi^\nu  \bigr\|_{H^{m,w}} \to 0$ as claimed.
\end{proof}

\begin{lemma}\label{lem:invert}
The linearized Floer operator at $\g\in\cC^\infty(\R\times S^1,\C^n)$ with limits \eqref{eq:exp} is an isomorphism
$\rD_\g \pbar :  H^{m+1,w}(\R\times S^1,\C^n) \to H^{m,w}(\R\times S^1,\C^n)$ if both the weight $|w|$ and the $\cC^{m+1}$ distance between $\g$ and  $\overline c_0$ are sufficently small. More precisely, there is $\delta>0$ so that for every $m\in\N_0$ and $w\in(- \delta,\delta)$ there exist constants $C,\eps>0$ so that for $\|\g-\overline c_0\|_{\cC^{m+1}}<\eps$ we have
\begin{equation}\label{eq:linop}
\| \xi \|_{H^{m+1,w}} \leq C \| (\rD_\g \pbar)\xi\|_{H^{m,w}} 
\qquad\forall  \xi\in H^{m+1,w}(\R\times S^1,\C^n)
\end{equation}
\end{lemma}

\begin{proof}
In our simplified setting, the Hamiltonian orbit $\overline c_0\equiv 0$ is a critical point of a function $H:\C^n\to \R$ which yields the vector field $X=J\nabla H$. It is nondegenerate in the sense that the Hessian of $H$ at $0$ is an isomorphism $\text{\it Hess}_0:\C^n\to \C^n$. In particular, the operator $F_{\overline c_0}$ simplifies to $\xi\mapsto \text{\it Hess}_0\xi$, a self-adjoint operator on $L^2(S^1,\C^n)$. Thus $\rD_{\overline c_0} \pbar = \partial_s + A(s)$ is an operator of Atiyah-Patodi-Singer type, given by a family of self-adjoint operators $A(s)=J(0) \, \partial_t + \text{\it Hess}_0$.
This particular family is constant and invertible, so that by e.g.\ \cite[Prop.3.12]{robbinsalamon} it induces an isomorphism $H^1(\R\times S^1,\C^n)\to H^0(\R\times S^1,\C^n)$. This yields an estimate
$$
\| \xi \|_{H^{1}} \leq C_0 \| (\rD_{\overline c_0} \pbar) \xi \|_{H^0} \qquad\forall \xi\in H^{1}(\R\times S^1,\C^n) .
$$
The generalization to weighted Sobolev spaces $H^{1,w}\to H^{0,w}$ can be achieved via the isometries $H^{m,w}\to H^{m},  \xi \mapsto e^{w\eta}\xi$. Composition with these yields the operator $e^{\mi w\eta} (\partial_s + A) e^{w\eta} = \partial_s + A + w\eta' : H^1\to H^0$.
Similarly, we can write the more general linearized operator as sum
$(\rD_{\g} \pbar) = \rD_{\overline c_0} \pbar  + E_\g$ of the known isomorphism with an error term $E_\g = \bigl(J(\g)-J(\overline c_0)\bigr)\partial_t + F_\g-F_{\overline c_0}$.  Now in each case it remains to establish a bound $\|E\xi\|_{H^0} \leq c \|\xi\|_{H^1}$ with $c<\frac 1{C_0}$ to obtain the estimate $\|\xi\|_{H^1}\leq \frac{C_0}{1-C_0 c}\| (\partial_s+A+E)\xi\|_{H^0}$. 

For transferring the $H^1$ estimate to weight $w\in\R$ we have $\| w \eta'\xi\|_{H^0}\leq |w|\cdot \|\eta'\|_{\cC^{0}} \|\xi\|_{H^0}$. So taking $\delta\leq (2C_0\|\eta'\|_{\cC^{0}})^{\mi 1}$ yields for $m=0$
$$
\| \xi \|_{H^{m+1,w}} \leq 2 C_0 \| (\rD_{\overline c_0} \pbar) \xi \|_{H^{m,w}} \qquad\forall  |w|< \d, \xi\in H^{m+1,w}(\R\times S^1,\C^n) .
$$
For general $m\in\N_0$ this estimate is obtained by summing over the $H^{1,w}$ estimates applied to derivatives, $\nabla^\alpha \xi$, noting that derivatives commute with $\rD_{\overline c_0} \pbar$.
This proves the Lemma for $\g=\overline c_0$ using norms $\|\xi\|_{H^{m+1,w}}=\sum_{|\alpha|\leq m} \|\nabla^\alpha\xi\|_{H^{1,w}}$ and $\|\zeta\|_{H^{m,w}}=\sum_{|\alpha|\leq m} \|\nabla^\alpha\zeta\|_{H^{0,w}}$. These norms are equivalent to other more standard norms via constants depending on $m$ and $w$, so the constant $C=C_{m,w}$ generally will not be uniform, but the allowable weights $w$ are independent of $m$. 

Finally, for general $\g\in\cC^\infty(\R\times S^1,\C^n)$ we bound the error term by
$$
\bigl\| \bigl(J(\g)-J(\overline c_0)\bigr)\partial_t\xi + (F_\g-F_{\overline c_0})\xi \bigr\|_{H^{m,w}}
\leq \| \g- \overline c_0 \|_{\cC^{m+1}} \|\xi\|_{H^{m+1,w}}
$$
to see that $\| \g-\overline c_0 \|_{\cC^{m+1}} < C_{m,w}^{\mi 1}$ guarantees the claimed estimate.
\end{proof}

\begin{lemma} \label{le3}
Property (iii) of Definition~\ref{def:scfred} holds both for the sc-Banach spaces $\B,\E$ in \eqref{eq:BE} and the extensions $\B^0,\E^0$ in \eqref{eq:BEext}.  
In fact, the linearized operator 
$$
\rD_{\B}\Phi(0,0) = \bigl( \partial_s  + J(\g_i) \, \partial_t - \rD_{\g_i} (JX) \bigr)_{i=1,2} : \; \B^0 \;\to\; \E^0
$$ 
is sc-Fredholm as in Definition~\ref{fred op}. For any $m\in\N_0$ the linearized operators $\rD_\B\Phi(r,0): H^{m+1,\d_m}\times H^{m+1,\d_m} \to H^{m,\d_m}\times H^{m,\d_m}$ are classically Fredholm for sufficiently small $r>0$ with the same Fredholm index as $\rD_\B\Phi(0,0)$, and
they are regularizing in the sense that for $(\xi_1,\xi_2)\in H^{1,w}\times H^{1,w}$ with $|w|<\d$ we have 
$$
 \rD_{\B}\Phi(r,0) (\xi_1,\xi_2) \in H^{m,\delta_m}\times H^{m,\delta_m}
 \quad \Longrightarrow \quad  (\xi_1,\xi_2)  \in H^{m+1,\d_m}\times H^{m+1,\d_m} .
$$ 
\end{lemma}
\begin{proof}
To check that $\rD_{\B}\Phi(0,0) = \bigl( \rD_{\g_i} \pbar \bigr)_{i=1,2}:\B^0\to\E^0$ is sc-Fredholm we note that the conditions of Definition~\ref{fred op} follow from the following standard elliptic estimates, regularity, and Fredholm properties for the linearized Floer operators $\rD_{\g} \pbar$ 
at any $\g\in\cC^\infty(\R\times S^1,\C^n)$ with limits $\lim_{s\to\pm\infty}\g(s,\cdot)=c_0$ as in \eqref{eq:exp}. 

\begin{enumerate}
\item
For $m\in\N_0$ and $w\in\R$ the operator $\rD_{\g} \pbar : H^{m+1,w} \to H^{m,w}$ is bounded.
\item
For $\xi\in H^{1,w}$ with $|w|<\d$ and $(\rD_{\g} \pbar) \xi \in H^{m,\d_m}$ we have $\xi\in H^{m+1,\d_m}$. 
To confirm this, we first use elliptic regularity on finite cylinders to deduce $\psi \xi \in H^{m+1,\d_m}$ for any compactly supported smooth cutoff function $\psi$. 
%
Next, we pick $\g_0\in\cC^\infty(\R\times S^1,\C^n)$ so that it coincides with $\g$ on the support of $1-\psi$ and satisfies the condition $\|\g_0-\overline c_0\|_{\cC^{m+1}}<\eps$ from Lemma~\ref{lem:invert}. 
This is possible due to the uniform convergence $\lim_{s\to\pm\infty}\g(s,\cdot)=c_0$ while choosing $\psi|_{[-S,S]}\equiv 1$ for sufficiently large $S$. 
Then these choices yield invertible operators 
$\rD_{\g_0} \pbar : H^{m+1,\d_m}\to H^{m,\d_m}$ and $\rD_{\g_0} \pbar : H^{1,w'}\to H^{0,w'}$ for $w'=\min\{w,\d_m\}$. Here $w'$ fits Lemma~\ref{lem:invert} since $|w|, |\d_m|<\d$.
Now we have $\rD_{\g_0}  \pbar (1-\psi)\xi = \rD_\g  \pbar \xi- \rD_{\g_0} \pbar \psi\xi$, which is of regularity $H^{m,\d_m}$ by hypothesis and the regularity of $\psi\xi$. So we can find $\zeta\in H^{m+1,\d_m}$ with $\rD_{\g_0} \pbar \zeta = \rD_{\g_0}  \pbar (1-\psi)\xi$. 
Moreover, we have $(1-\psi)\xi = \zeta$ since $(1-\psi)\xi - \zeta \in H^{1,w'}$ lies in the trivial kernel of $\rD_{\g_0} \pbar$. Finally, this proves $\xi = \psi\xi + (1-\psi)\xi = \zeta + (1-\psi)\xi \in H^{m+1,\d_m}$.
\item
$\rD_{\g} \pbar : H^{1,w} \to H^{0,w}$ is a Fredholm operator for $|w|<\d$.
This can be proven by weighted versions of the Sobolev estimates in e.g.\ \cite{AD2010,Salamon}. Alternatively, the isomorphisms $H^{m,w}\to H^{m},  \xi \mapsto e^{w\eta}\xi$ transform the operator to $H^1\to H^0$, $\xi \mapsto e^{\mi w\eta} (\rD_{\g} \pbar) (e^{w\eta}\xi) = (\rD_{\g} \pbar) \xi + w \eta' \xi$. This is in Atiyah-Patodi-Singer form $\partial_s + A(s)$ with $A(s)= J(\g(s,\cdot))\partial_t + F_{\g(s,\cdot)} + w\eta'(s){\rm Id}$, so that the Fredholm property follows from e.g.\ \cite[Thm.3.10]{robbinsalamon}. Here the additional term in $\lim_{s\to\pm\infty}A(s)= J(c_0)\partial_t + \text{\it Hess}_{c_0} \pm w {\rm Id}$ preserves invertibility of the limit operators since $w\in(-\d,\d)$ lies in the spectral gap of the self-adjoint operator $J(c_0)\partial_t + \text{\it Hess}_{c_0}$ on $H^0(S^1,\C^n)$.\footnote{Lemma~\ref{sc fred} abstractly shows that $\rD_{\g} \pbar : H^{m+1,\d_m} \to H^{m,\d_m}$ have the same Fredholm index for all $m\in\N_0$, but the transformation here shows that the choice of $\d_m$ in the spectral gap is crucial:
The Fredholm index of $\rD_{\g} \pbar : H^{1,w} \to H^{0,w}$ is the spectral flow of the self-adjoint operator family $A(s)= J(\g(s,\cdot))\partial_t + \rD_{\g(s,\cdot)}(JX) + w \eta'(s){\rm Id}$ by e.g.\ \cite[Thm.4.1]{robbinsalamon}. By homotopy invariance of the spectral flow, it is constant under variation of $w$ if the limit operators $J(c_0)\partial_t + \text{\it Hess}_{c_0} \pm w {\rm Id}$ remain invertible. 
}
\end{enumerate}
This shows that $\rD_{\B}\Phi(0,0) : (B_m)_{m\in\N_0} \to (E_m)_{m\in\N_0}$ is sc-Fredholm as in Definition~\ref{fred op}. By Lemma~\ref{sc fred} this transfers to $\rD_{\B}\Phi(0,0) :  (B_m)_{m\in\N} \to (E_m)_{m\in\N}$.

To verify the regularization property of $\rD_\B\Phi(r,0)$ for $r>0$ consider \eqref{defDPhi} 
\begin{align*}  
(\widehat\oplus_R \times \widehat\ominus_R) \; \rD_\B\Phi(r, 0) (\xi_1,\xi_2) 
&\;=\;
\left( \begin{aligned}
\rD_{\g_r} \pbar \oplus_R \! (\xi_1,\xi_2)  
\\
\rD_{\overline c_0} \pbar \ominus_R \! (\xi_1,\xi_2)  
\end{aligned}
\right) , 
\end{align*}
which expresses $\rD_\B\Phi(r,0)$ as a composition of the linearized Floer operators at $\g_r, \overline c_0$ -- which are regularizing by (ii) above -- and the isomorphisms $\oplus_R \times\ominus_R$ and $\widehat\oplus_R \times \widehat\ominus_R$. We claim that for $r>0$ and thus $R(r)<\infty$ these Cartesian products of pregluing and anti-pregluing operations induce isomorphisms 
$H^{k,w}\times H^{k,w} \;\overset{\sim}{\longrightarrow}\; H^{k,w}\times H^{k,w}$ for all $k\in\N_0$ and weights $w\in\R$. 
Indeed, both maps have the form
\begin{align*}
(\zeta_1,\zeta_2) \; \mapsto\; 
\left( \begin{aligned}
(\b-1) \cdot   \tau_R\,\zeta_1 +\b \cdot  \tau_{ - R} \,\zeta_2  \\
\b \cdot \tau_R \, \zeta_1 + (1-\b) \cdot  \tau_{ - R} \, \zeta_2
\end{aligned}\right)
\end{align*}
with inverse operators from \eqref{eq:totalglue} given by
\begin{align*}
(\zeta_1,\zeta_2) \; \mapsto\; 
\left( \begin{aligned}
 \tau_{\mi R}  \bigl( \tfrac{\b}{\b^2 + (1-\b)^2} \zeta_1 + \tfrac{\b-1}{\b^2 + (1-\b)^2} \zeta_2 \bigr)  \\
\tau_{R}  \bigl(\tfrac{1-\b}{\b^2 + (1-\b)^2} \zeta_1 + \tfrac{\b}{\b^2 + (1-\b)^2}  \zeta_2 \bigr) 
\end{aligned}\right) . 
\end{align*}
And these maps preserve $H^{k,w}$-regularity since $H^{k,w}$ is closed under the shift operations $\tau_{\pm R}$ for fixed $R$, as well as under sums and multiplication with smooth functions that have uniformly bounded derivatives.

Now consider $\xi_1,\xi_2\in H^{1,w}$ with $|w|<\d$ and $\rD_{\B}\Phi(r,0)(\xi_1,\xi_2)\in H^{m,\delta_m}\times H^{m,\delta_m}$. Then we have $\zeta_+:= \oplus_R (\xi_1,\xi_2)\in H^{1,w}$ with $\rD_{\g_r} \pbar\zeta_+\in H^{m,\delta_m}$ and $\zeta_-:= \ominus_R (\xi_1,\xi_2) \in H^{1,w}$ with $\rD_{\overline c_0} \pbar\zeta_-\in H^{m,\delta_m}$. 
Then the regularization property (ii) of the linearized Floer operators at $\g_r, \overline c_0$ implies 
$(\zeta_+,\zeta_-)\in H^{m+1,\delta_m}\times H^{m+1,\delta_m}$, and hence
$(\xi_1,\xi_2) = (\oplus_R \times \ominus_R)^{-1} (\zeta_+,\zeta_-)\in H^{m+1,\delta_m}\times H^{m+1,\delta_m}$. 


Finally, to see that $\rD_\B\Phi(r,0): H^{m+1,\d_m}\to H^{m,\d_m}$ is Fredholm\footnote{
Here we use $H^{m+1,\d_m}\to H^{m,\d_m}$ 
as short hand for an operator from $B_m=H^{m+1,\d_m}\times H^{m+1,\d_m}$ to $E_m = H^{m,\d_m}\times H^{m,\d_m}$. 
This should prevent confusion between the sc-Banach space $\B=(B_m)_{m\in\N_0}$ and the error terms $B_1,B_2$.   
} 
for sufficiently small $r>0$ (depending on $m\in\N_0$ and $|\d_m|<\d$) with the same index as $\rD_\B\Phi(0,0): H^{m+1,\d_m}\to H^{m,\d_m}$ (which is independent of $m\in\N_0$ by Lemma~\ref{sc fred}) we proceed in two steps:

Lemma~\ref{lem:est} compares the operators $\uxi\mapsto \rD_\B\Phi(r,0)\uxi - (B_1(r,\uxi),B_2(r,\uxi))$ and $\rD_\B\Phi(0,0)$ and bounds their difference in the operator norm (on the space of bounded operators from $H^{m+1,\d_m}$ to $H^{m,\d_m}$) by $\| \g_1 \|_{\cC^{m+1}([R-1,\infty)\times S^1) }$ and $ \|  \g_2 \|_{\cC^{m+1}((\mi \infty,\mi R+1]\times S^1) }$, both of which converge to zero as $r\to 0$. 
Since the set of Fredholm operators is open and the index is locally constant, this proves that there is $\eps_m>0$ so that $\rD_\B\Phi(r,0) - (B_1(r,\cdot),B_2(r,\cdot))$ is Fredholm with the index of $\rD_\B\Phi(0,0)$ for all $0<r<\eps_m$. 

Second, for any fixed $r<\eps_0$, we claim that $\rD_\B\Phi(r,0)$ is Fredholm with the same index as well because the difference $(B_1(r,\cdot),B_2(r,\cdot): H^{m+1,\d_m}\to H^{m,\d_m}$ is a compact operator. Indeed, it factors into the compact Sobolev embedding $H^{m+1,\d_m}\hookrightarrow H^{m,\d_{m-1}}$ for $\d_{m-1}<\d_m$ and  
a bounded operator $H^{m,\d_{m-1}}\to H^{m,\d_m}$ by the estimate 
\begin{align*}
& \| B_1(r,\xi_1,\xi_2) \|_{H^{m,\d_m}} +  \| B_2(r,\xi_1,\xi_2) \|_{H^{m,\d_m}} \\
&\qquad\leq C_m \bigl( \| \xi_1 \|_{H^{m,\d_m}([R-1,R+1]\times S^1)} + \| \xi_2 \|_{H^{m,\d_m}([\mi R-1,\mi R+1]\times S^1)} \bigr) \\
&\qquad\leq C_m  \| e^{(\d_m-\d_{m-1})\eta} \|_{\cC^m([\mi R-1,R+1])} \bigl( \| e^{\d_{m-1}\eta} \xi_1 \|_{H^m} + \|  e^{\d_{m-1}\eta}\xi_2 \|_{H^m} \bigr) \\
&\qquad\leq C e^{(\d_m-\d_{m-1})(R+1)} \bigl( \| \xi_1 \|_{H^{m,\d_{m-1}}} + \| \xi_2 \|_{H^{m,\d_{m-1}}} \bigr).
\end{align*}
Note that, while the constants in these estimates depend on $r>0$, they do prove compactness of the operators $(B_1(r,\cdot),B_2(r,\cdot))$ for all $r>0$ and thus yield the Fredholm property for $\rD_\B\Phi(r,0)$ with  index independent of $r\ge 0$. 
\end{proof}

\begin{remark}\rm \label{rmk:gen3.9}
Theorem~\ref{mainthm} extends to the general setting of Remarks~\ref{rmk:gen3}, \ref{rmk:gen4} by the following adjustments to the proof:
\begin{itemlist}
\item
The formula \eqref{DPhi} for the partial derivative $\rD_\B \Phi$ continues to hold if we replace $\R\times S^1\times \C^n$ with the pullback bundles $\g_\pm^*\rT M$, identify $c^*\rT M\simeq c_0^*\rT M$ for all $c\in\cC^\infty(S^1,M)$ close to $c_0$ as in Remark~\ref{rmk:gen1}, and use exponential maps instead of addition in the construction of $\Phi$ as in Remark~\ref{rmk:gen3}.
This only requires to adjust the notation to $\g_r^\pmi:=\tau_{\pmi R}\ti\g_R$ as in Remark~\ref{rmk:gen1} and $\g_r^{e\,\pmi}:=\tau_{\pmi R}\exp_{\ti\g_R}(e)$.
\item
The estimates in Lemma~\ref{le1}, \ref{lem:est}, \ref{le2}, \ref{lem:invert} continue to hold after replacing any $\cC^{m+1}$-norm of $\g_{\ldots} = \g_{\ldots}-\overline c_0$ with the $\cC^{m+1}$-norm of the section $(s,t)\mapsto (\exp_{\overline c_0(t)})^{-1}(\g_{\ldots}(s,t))$ of $\overline c_0^* \rT M$.
 
The proofs use pointwise estimates arising from the fact that in local trivializations the exponential differs from addition only by quadratic terms.
\item
The linearized Fredholm properties in Lemma~\ref{le3} directly transfer under the shift in regularity from $\B^0,\E^0$ to $(B_m)_{m\ge 2}, (E_m)_{m\ge 2}$. The generalization to pullback bundles does not affect statements or proofs, and the restriction of the base space to the codimension $2$ subspaces $B_m= \{\xi_\pm(0,0)\in \rT_{\g_\pm(0,0)}\Si_\pm\}\subset H^{m+1,\d_m}\times H^{m+1,\d_m}$ as in Remark~\ref{rmk:gen4} affects only the classical Fredholm property of $\rD_\B\Phi(r,0)$ for $r\ge 0$. 

For $r=0$ we argued that the linearized Floer operators $\rD_\g\pbar:H^{1,w}\to H^{0,w}$ are Fredholm and deduced -- via the other sc-Fredholm properties (i),(ii) in Definition~\ref{fred op} and Lemma~\ref{sc fred} -- that the operators in higher regularity $\rD_\g\pbar:H^{m+1,\d_m}\to H^{m,\d_m}$ are Fredholm. Then $\rD_\B\Phi(0,0)$ is given by the two restrictions $\rD_{\g_\pm}\pbar|_{\{\xi_\pm(0,0)\in \rT_{\g_\pm(0,0)}\Si_\pm\}}$. These are Fredholm with index reduced by the  codimension of the restriction.

For $r>0$ the comparison with $\rD_\B\Phi(0,0)$ via estimates (in the operator norm resp.\ showing compactness of extra terms) is not affected. 
\end{itemlist}

More abstractly, this discussion shows that restriction of a sc-Fredholm map $\Phi:[0,\eps)\times\B\to\E$ to higher scales $\B=(B_{\ell+m})_{m\in\N_0}$, $\E=(E_{\ell+m})_{m\in\N_0}$ does not affect the sc-Fredholm property or index. The restriction $\Phi:[0,\eps)\times\B'\to\E$ to an sc-complement $\B=\B'\oplus_{\rm sc} C$ of a finite dimensional subspace $C\subset B_\infty$ does not affect the sc-Fredholm property either, but reduces the index by $\dim C$. 
However, neither of these arguments can be used to deduce the sc-Fredholm property of $\Phi:[0,\eps)\times\B^{S^1}\to \E^{S^1}$ in the setup for Morse theory from Remarks~\ref{rmk:gen3}, \ref{rmk:gen4}, since the $S^1$-invariant function spaces $\B^{S^1}\subset \B$, $\E^{S^1}\subset\E$ have infinite codimension. 
However, we can directly transfer every step of the proof of the sc-Fredholm property in Theorem~\ref{mainthm} holds to the Morse theory setting by replacing properties of the linearized Floer operator with the analogous properties of the linearized gradient flow operator $\nabla_s + \rD_\g \nabla H: H^{m+1,w}(\R,\g^*\rT M)\to H^{m,w}(\R,\g^*\rT M)$. 
\end{remark}

%
%

\appendix

\section{Brief Guide to Polyfold Theory}
\label{sec:polyfold}

Polyfold theory was developed in \cite{hwzbook} in order to {\it regularize} moduli spaces of pseudoholomorphic curves. In the simplest cases, this is achieved by describing a Gromov-compactified moduli space $\CM=\s^{-1}(0)$ as the zero set of a section and associating to it a cobordism class $[(\s+p)^{-1}(0)]$ of perturbed zero sets obtained from the following core theorem.

\medskip\noindent
{\bf M-Polyfold Regularization Theorem:} 
Let $\cE\to\cB$ be a strong M-polyfold bundle modeled on scale Hilbert spaces, and let $\s:{\cB}\to{\cE}$ be a scale smooth Fredholm section such that $\s^{-1}(0)\subset{\cB}$ is compact.
Then there exists a class of sc$^+$ perturbation sections $\nu:{\cB}\to{\cE}$ supported near $\s^{-1}(0)$ such that $\s+\nu$ is transverse to the zero section. As a consequence, $(\s+\nu)^{-1}(0)$ carries the structure of a smooth compact manifold. Moreover, for any other such perturbation $\nu':{\cB}\to{\cE}$ there exists a smooth compact cobordism between $(\s+\nu')^{-1}(0)$ and $(\s+\nu)^{-1}(0)$.

\medskip
Similar polyfold regularization theorems exist for moduli spaces with isotropy as well as boundary and corners; in general yielding transverse multivalued perturbations, whose zero sets are compact weighted branched orbifolds with boundary and corners, unique up to appropriate cobordism. 
The following is a glossary for the new language used to formulate these theorems. More detailed introductions to the underlying ideas can be found in e.g.\  \cite[2.1]{theguide}.

\medskip
\noindent
{\bf Scale Banach / Hilbert space:} A {\it Banach / Hilbert space} such as  $L^2(S^1)$ with additional scale structure such as $(H^k(S^1))_{k\in\N_0}$. \\
{\it (In finite dimensions:  vector space with norm / inner product.)} 
 
\smallskip
\noindent
{\bf Scale differentiability / smoothness:} A notion of {\it differentiability / smoothness} for maps between scale Banach spaces such that (classically nonwhere differentiable) reparametrization actions such as $S^1 \times L^2(S^1) \to L^2(S^1), (t, u) \mapsto u(t+\cdot)$ are scale smooth, and the chain rule holds. \\
{\it (In finite dimensions:  classical differentiability / smoothness.)}
 
\smallskip
\noindent
{\bf M-polyfold:} Analogue to the notion of {\it Banach manifold}, designed to allow a notion of smooth structure on a space $\cB$ of (not necessarily pseudoholomorphic) maps modulo reparametrization whose (nodal and non-nodal) domains vary over a Deligne-Mumford type space. \\
{\it (In finite dimensions: manifold.)}

\smallskip
\noindent
{\bf Strong M-polyfold bundle:} Analogue to the notion of {\it Banach bundle} -- a bundle $\cE\to\cB$ over an M-polyfold $\cB$ whose fibers are scale Banach spaces. \\
{\it (In finite dimensions: vector bundle.)}

\smallskip
\noindent
{\bf Polyfold (with boundary/corners):} Analogue to the notion of {\it Banach orbifold (with boundary/corners)}, generalizing the notion of M-polyfold to allow for maps with nontrivial isotropy and Deligne-Mumford spaces with boundary/corners. \\
{\it (In finite dimensions: orbifold  (with boundary/corners).)}

\smallskip
\noindent
{\bf Strong polyfold bundle:} Analogue to the notion of {\it Banach orbi-bundle} -- a bundle $\cE\to\cB$ over a polyfold $\cB$ whose fibers are scale Banach spaces.\\ 
{\it (In finite dimensions: orbi-bundle.)}

\smallskip
\noindent
{\bf Scale-smooth Fredholm section of strong (M-)polyfold bundle:} Analogue to the notion of {\it Fredholm section in a Banach (orbi-)bundle}, which allows to describe a family of Cauchy-Riemann operators arising from complex structures in a Deligne-Mumford space (and almost complex structures possibly undergoing ``neck stretching'') as a single section $\sigma=\bar\partial_J:\cB\to\cE$ of an appropriate (M-)polyfold bundle.\\
{\it (In finite dimensions: smooth section.)}

\smallskip
\noindent
{\bf sc$^+$ section of strong (M-)polyfold bundle:} Analogue to the notion of {\it compact perturbation of a Fredholm section}. Examples are reparametrization-invariant $0$-th order perturbations of Cauchy-Riemann operators. \\
{\it (In finite dimensions: smooth section.)}

\smallskip
\noindent
{\bf Regularization with boundary and corners:} The polyfold regularization theorem generalizes directly to Fredholm sections $\sigma:\cB\to\cE$ over (M-)polyfolds with boundary and corners in various versions corresponding to the notion of transversality to the boundary strata and admissible perturbations (arising from compatibility requirements between perturbations and gluing/breaking).

General sc$^+$ perturbations can always be chosen such that $\s+\nu$ is ``neatly transverse''
and hence $(\s+\nu)^{-1}(0)$ is a compact manifold (resp.\ weighted branched orbifold) with boundary and corners, whose corner strata are given by its intersection with the corresponding boundary strata of $\cB$.

\end{document}